\mathchardef\hyphen="2D
\numberwithin{equation}{section}
    \newtheorem{thm}{Theorem}[section]
    \newtheorem{lem}[thm]{Lemma}
    \newtheorem{prop}[thm]{Proposition}
    \newtheorem{cor}[thm]{Corollary}
    \newtheorem{defn}[thm]{Definition}
    \newtheorem{rem}[thm]{Remark}
\DeclareMathAlphabet{\mathpzc}{OT1}{pzc}{m}{it}
\newenvironment{pf}{\par\smallskip\noindent\emph{Proof.}}{\hfill\qed\par\smallskip}
\newenvironment{pf*}[1]{\par\smallskip\noindent\emph{#1.}}{\hfill\qed\par\smallskip}
\begin{document}
\title{$F$-isocrystal and syntomic regulators via hypergeometric functions}
\author{Masanori Asakura}
\address{Department of Mathematics, Hokkaido University, Sapporo, 060-0810 Japan}
\email{asakura@math.sci.hokudai.ac.jp}
\author{Kazuaki Miyatani}
\address{Department of Mathematics, Hiroshima University, Hiroshima, 739-8526 Japan}
\email{miyatani@hiroshima-u.ac.jp}
\date\today
\maketitle

\def\can{\mathrm{can}}
\def\ch{{\mathrm{ch}}}
\def\Coker{\mathrm{Coker}}
\def\crys{\mathrm{crys}}
\def\dlog{d{\mathrm{log}}}
\def\dR{{\mathrm{d\hspace{-0.2pt}R}}}            
\def\et{{\mathrm{\acute{e}t}}}  
\def\Frac{{\mathrm{Frac}}}
\def\phami{\phantom{-}}
\def\id{{\mathrm{id}}}              
\def\Image{{\mathrm{Im}}}        
\def\Hom{{\mathrm{Hom}}}  
\def\Ext{{\mathrm{Ext}}}
\def\MHS{{\mathrm{MHS}}}  
  
\def\Ker{{\mathrm{Ker}}}          
\def\rig{{\mathrm{rig}}}
\def\Pic{{\mathrm{Pic}}}
\def\CH{{\mathrm{CH}}}
\def\NS{{\mathrm{NS}}}
\def\Fil{{\mathrm{Fil}}}
\def\End{{\mathrm{End}}}
\def\pr{{\mathrm{pr}}}
\def\Proj{{\mathrm{Proj}}}
\def\ord{{\mathrm{ord}}}
\def\reg{{\mathrm{reg}}}          %
\def\res{{\mathrm{res}}}          %
\def\Res{\mathrm{Res}}
\def\Spec{{\mathrm{Spec}}}     
\def\syn{{\mathrm{syn}}}
\def\cont{{\mathrm{cont}}}
\def\zar{{\mathrm{zar}}}
\def\bA{{\mathbb A}}
\def\bC{{\mathbb C}}
\def\C{{\mathbb C}}
\def\G{{\mathbb G}}
\def\bE{{\mathbb E}}
\def\bF{{\mathbb F}}
\def\F{{\mathbb F}}
\def\bG{{\mathbb G}}
\def\bH{{\mathbb H}}
\def\bJ{{\mathbb J}}
\def\bL{{\mathbb L}}
\def\cL{{\mathscr L}}
\def\bN{{\mathbb N}}
\def\bP{{\mathbb P}}
\def\P{{\mathbb P}}
\def\bQ{{\mathbb Q}}
\def\Q{{\mathbb Q}}
\def\bR{{\mathbb R}}
\def\R{{\mathbb R}}
\def\bZ{{\mathbb Z}}
\def\Z{{\mathbb Z}}
\def\cH{{\mathscr H}}
\def\cD{{\mathscr D}}
\def\cE{{\mathscr E}}
\def\cO{{\mathscr O}}
\def\O{{\mathscr O}}
\def\cJ{{\mathscr J}}
\def\cK{{\mathscr K}}
\def\cR{{\mathscr R}}
\def\cS{{\mathscr S}}
\def\cX{{\mathscr X}}
\def\cY{{\mathscr Y}}
\def\cZ{{\mathscr Z}}

%
\def\ve{\varepsilon}
\def\vG{\varGamma}
\def\vg{\varGamma}
%
%
%
%
\def\lra{\longrightarrow}
\def\lla{\longleftarrow}
\def\Lra{\Longrightarrow}
\def\hra{\hookrightarrow}
\def\lmt{\longmapsto}
\def\ot{\otimes}
\def\op{\oplus}
\def\l{\lambda}
\def\Isoc{{\mathrm{Isoc}}}
\def\FIsoc{{F\text{-}\mathrm{Isoc}^\dag}}
\def\FMIC{{F\text{-}\mathrm{MIC}}}
\def\FilFMIC{{\mathrm{Fil}\text{-}F\text{-}\mathrm{MIC}}}
\def\Log{{\mathscr{L}{og}}}
\newcommand{\sPol}{\mathop{\mathscr{P}\!\mathit{ol}}\nolimits}
\def\uq{\underline{q}}
\def\wt#1{\widetilde{#1}}
\def\wh#1{\widehat{#1}}
\def\spt{\sptilde}
\def\ol#1{\overline{#1}}
\def\ul#1{\underline{#1}}
\def\us#1#2{\underset{#1}{#2}}
\def\os#1#2{\overset{#1}{#2}}

\begin{abstract}
We give an explicit description of a syntomic regulator of a certain class of fibrations which we call hypergeometric fibrations. The description involves hypergeometric functions.
\end{abstract}

\section{Introduction}
In a preprint \cite{a-k2}, the first author discussed Beilinson's regulators
of a {\it hypergeometric fibration} $f:Y\to \P^1$ given by an equation
\[
f^{-1}(\l):y^N=x^a(1-x)^{N-b}(1-\l x)^b,\quad 0<a,b<N.
\]
$f$ is smooth over $\P^1\setminus\{0,1,\infty\}$, and the genus
of the general fiber is $N+1-\gcd(N,a)-\gcd(N,b)$.
It is endowed with multiplication by the group $\mu_N$ of $N$-th roots of $1$
by $[\zeta]\cdot(x,y,\l)=(x,\zeta^{-1}y,\l)$.
The cohomology group $H^1_B(f^{-1}(\l),\Q(\zeta_N))$ decomposes into 
the product of 2-dimensional
eigenspaces.
In \cite{a-k2}, we construct an element $\xi$
in Quillen's $K_2$ of $X:=f^{-1}(\P^1\setminus\{0,1,\infty\})$ (this is explicitly given as a 
$K_2$-symbol in case $(a,b)=(1,1)$, but otherwise we have only the existence), and 
give an explicit description of the Beilinson regulator $\reg(\xi)$
in terms of the {\it generalized hypergeometric functions} ${}_3F_2$ or ${}_4F_3$.
In case of $(N,a,b)=(2,1,1)$, $f$ is the Legendre family of elliptic curves, and then
one can expect the Beilinson conjecture on non-critical values of $L$-functions
and regulator. In loc.cit. we also give a numerical verification on the 
conjecture for our $\reg(\xi)$.

\medskip

The purpose of this paper is to give a $p$-adic counter part of \cite{a-k2}.
We consider the hypergeometric fibration given by an equation
\[
y^N=x(1-x)^{N-1}(1-\l x)
\]
which is defined over the Witt ring $W=W(\ol\F_p)$. There is an element $\xi\in K_2(X)$
constructed in \cite{a-k2}.
The main theorem is to give an explicit description of $\reg_\syn(\xi)$
in terms of hypergeometric function ${}_2F_1$, where 
$\reg_\syn$ is the {\it syntomic regulator map}
\[
\reg_\syn:K_2(X_\alpha)\lra H^2_\syn(X_\alpha,\Q_p(2))\cong H^1_\dR(X_\alpha/W)\ot\Q,
\quad X_\alpha:=f^{-1}(\alpha)
\]
to the syntomic cohomology of Fontaine-Messing (\cite{Ka2})
or the rigid syntomic cohomology of Besser (\cite{Be1}).
See Theorem \ref{main-thm} for the precise statement.
Besser showed that the syntomic regulator on $K_2$ of curves are
the Coleman integral (\cite{Be2} Theorem 3).
Thus our main theorem allows us to obtain a description of the Coleman
integral in terms of hypergeometric functions (Theorem \ref{coleman-thm})

To prove the main theorem, we make use of the theory of {\it $F$-isocrystals} which are, simply saying,
integrable connections with Frobenius action.
The strategy is as follows.
To an element $\xi\in K_2(X)$,
we associate an extension 
\[
0\lra H^1(X/S)(2)\lra M_\xi(X/S)\lra \O_S\lra 0
\]
of filtered $F$-isocrystals on $S:=\P^1\setminus\{0,1,\infty\}$
(\S \ref{Milnor-Ext-sect}).
Then its extension data gives the syntomic regulator (\S \ref{1-ext-sect}).
To compute the extension data, we take two steps.
First we derive ``differential equations'' of $\reg_\syn(\xi)$ from the 
commutativity of the Frobenius and connection (\S \ref{mainpf1-sect}).
To do this
we need to compute the matrix of the Frobenius on $H^1(X/S)$ completely, 
not only the eigenvalues but also non-diagonal entries. This is done in \S \ref{frob-sect}.
The second step is to determine the ``constant terms''.
To do this we compute the syntomic regulator
on $K_2$ of the degenerating curve $f^{-1}(0)$, which is essentially the regulator on 
$H^1_{\mathscr M}(\Spec W,\Q_p(2))$ and hence the $p$-adic dilogarithmic function
$\ln^{(p)}_2(z)$ appears (\S \ref{mainpf2-sect}).


\subsection*{Notation.}
For a integral domain $V$ and a $V$-algebra $R$ (resp. $V$-scheme $X$),
we denote by $R_K$ (resp. $X_K$) the tensoring $R\otimes_VK$ (resp. $X\times_VK$)
with the fractional field $K$.

Suppose that $V$ is a complete valuation ring $V$ endowed with a non-archimedian 
valuation $|\cdot|$.
For a $V$-algebra $B$ of finite type,
we denote by $B^{\dag}$ the weak completion of $B$.
Namely if $B=V[T_1,\cdots,T_n]/I$, then 
$B^\dag=V[T_1,\cdots,T_n]^\dag/I$ where $V[T_1,\cdots,T_n]^\dag$
is the ring of power series $\sum a_\alpha T^\alpha$ such that
for some $r>1$,
$|a_\alpha|r^{|\alpha|}\to0$ as $|\alpha|\to\infty$. 
We simply write $B^\dag_K=K\ot _V B^{\dag}$.
Let $X$ be a $V$-scheme of finite type.
Let $X_K\defeq X\times_{\Spec(V)}\Spec(K)$ and let $(X_K)^{\an}$ denote the rigid analytic space
associated to $X_K$.
Moreover, we let $\widehat{X}$ be the formal $V$-scheme obtained as the completion of $X$,
and let $\widehat{X}_K$ denote its generic fiber, which is a rigid analytic space.
Note that we have a canonical inclusion $\widehat{X}_K\hookrightarrow (X_K)^{\an}$
and that it is the identity if $X$ is proper.
If $Y$ is a sub-$k$-scheme of $X_k$, then
the tube of $Y$ in $\widehat{X}_K$ is denoted by $]Y[_{\widehat{X}}$ or
simply by $]Y[$ if no confusion will occur.
The natural inclusion is denoted by $j_Y\colon ]Y[\hookrightarrow\widehat{X}_K$.
If $W$ is a strict neighborhood of $]Y[$ in $\widehat{X}_K$,
then we also denote by $j_W\colon W\hookrightarrow\widehat{X}_K$ the inclusion.

\section{Milnor's $K$-theory and Extensions of filtered $F$-isocrystals}
\label{Milnor-Ext-sect}
Throughout this section we work over a complete discrete valuation ring $V$
such that the residue field $k$ is a perfect field of characteristic $p>0$.
We suppose that $V$ has a $p$-th Frobenius $\sigma$, namely an endomorphism
on $V$ such that $\sigma(x)\equiv x^p$ mod $pV$. 
Let $K=\Frac(V)$ be the fractional field.
The extension of $\sigma$ to $K$ is also denoted by $\sigma$.

\subsection{The category $\Fil$-$F$-$\MIC(S)$.}
Let $S=\Spec(B)$ be an affine smooth variety on $V$.
We define the category $\Fil$-$F$-$\MIC(S)$ as follows.
Let $\sigma_B\colon B^{\dag}\to B^{\dag}$ be a $p$-th Frobenius compatible with
$\sigma$ on $V$.
The induced endomorphism $\sigma_B\otimes_{\bZ}\bQ\colon B_K^{\dag}\to B_K^{\dag}$ 
is also denoted by $\sigma_B$. 
    An object of $\FilFMIC(S,\sigma_B)$ 
is a datum $H=(H_{\dR}, H_{\rig}, c, \Phi, \nabla, \Fil^{\bullet})$, where
    \begin{itemize}
            \setlength{\itemsep}{0pt}
        \item $H_{\dR}$ is a coherent $B_K$-module,
        \item $H_{\rig}$ is a coherent $B^{\dag}_K$-module,
        \item $c\colon H_{\dR}\otimes_{B_K}B^{\dag}_K\xrightarrow{\,\,\cong\,\,} H_{\rig}$ is a $B^{\dag}_K$-linear isomorphism,
        \item $\Phi\colon \sigma^{\ast}_BH_{\rig}\xrightarrow{\,\,\cong\,\,} H_{\rig}$ is an isomorphism of $B^{\dag}_K$-algebra,
        \item $\nabla\colon H_{\dR}\to \Omega_{B_K}^1\otimes H_{\dR}$ is an integrable connection and
        \item $\Fil^{\bullet}$ is a finite descending filtration on $H_{\dR}$ of locally free $B_K$-module (i.e. each graded piece is locally free),
    \end{itemize}
    that satisfies $\nabla(\Fil^i)\subset \Omega^1_{B_K}\ot \Fil^{i-1}$ and
    the compatibility of $\Phi$ and $\nabla$ in the following sense.
    Note first that $\nabla$ induces an integrable connection $\nabla_{\rig}\colon H_{\rig}\to\Omega^1_{B_K^{\dag}}\otimes H_{\rig}$,
    where $\Omega^1_{B_K^{\dag}}$ denotes the sheaf of continuous differentials.
    In fact, firstly regard $H_{\dR}$ as a coherent $\sO_{S_K}$-module.
        Then, by (rigid) analytification, we get an integrable connection $\nabla^{\an}$
        on a coherent $\sO_{(S_K)^{\an}}$-module $(H_{\dR})^{\an}$.
        Let $j\colon ]S_k[\hookrightarrow (S_K)^{\an}$ denote the inclusion and
        apply $j^{\dag}$ to $\nabla^{\an}$ to obtain
    an integrable connection on $\Gamma\left((S_K)^{\an}, j^{\dag}\big( (H_{\dR})^{\an}\big)\right)=H_{\dR}\otimes_{B_K}B_K^{\dag}$.
    This gives an integrable connection $\nabla_{\rig}$ on $H_{\rig}$
    via the isomorphism $c$.
Then the compatibility of $\Phi$ and $\nabla$ means
that $\Phi$ is horizontal with respect to $\nabla_{\rig}$, namely $\Phi\nabla_\rig=\nabla_\rig\Phi$.

    A morphism $H'\to H$ in $\FilFMIC(S,\sigma_B)$ 
    is a pair of homomorphisms $(h_{\dR}\colon H'_{\dR}\to H_{\dR}, h_{\rig}\colon H'_{\rig}\to H_{\rig})$,
    such that $h_{\rig}$ is compatible with $\Phi$'s, 
    $h_{\dR}$ is compatible with $\nabla$'s and $\Fil^{\bullet}$'s,
    and moreover they agree under the isomorphism $c$.

\medskip

Let $\sigma_B'$ be another $p$-th Frobenius on $B$. Then, as is well-known,
there is the natural equivalence $\FilFMIC(S,\sigma_B)\cong \FilFMIC(S,\sigma_B')$ of
categories. By virtue of this fact, we often drop ``$\sigma_B$'' in the notation.

\medskip

Let $n$ be an integer. The {\it Tate object} in $\Fil$-$F$-$\MIC(S)$, which
we denote by $B_K(n)$ or $\sO_{S_K}(n)$, is defined 
to be $(B_K, B_K^{\dag}, c, p^{-n}\sigma_B, 0, \Fil^{\bullet})$, where
$c$ is the natural isomorphism and where $\Fil^{\bullet}$ is defined by
$\Fil^{-n}B_K=B_K$ and $\Fil^{-n+1}B_K=0$.
We often abbreviate $B_K=B_K(0)$ and $\O_{S_K}=\O_{S_K}(0)$.
For an object $H$ of $\Fil$-$F$-$\MIC(S)$, we write $H(j)\defeq H\otimes B_K(j)$.

\medskip

A sequence
\[
 H_1\lra H_2\lra H_3
\]
in $\Fil$-$F$-$\MIC(S)$ is called {\it exact} if and only if
\[
 \Fil^i  H_{1,\dR}\lra \Fil^i H_{2,\dR}\lra \Fil^i H_{3,\dR}
\]
are exact for all $i$.
Then the category $\Fil$-$F$-$\MIC(S)$ forms an exact category which has
kernel and cokernel objects for any morphisms.
It also forms a tensor category in the usual manner with an unit object $B_K(0)$.
Thus the Yoneda extension groups
\[
\Ext^\bullet(H,H')=\Ext^\bullet_{\FilFMIC(S)}(H,H')
\]
in $\Fil$-$F$-$\MIC(S)$
are defined in the canonical way (or one can further define the derived category of
$\FilFMIC(S)$, \cite{BBDG}, 1.1).
An element of $\Ext^j(H,H')$ is represented by an exact sequence
\[
0\lra H'\lra M_1\lra\cdots\lra M_j\lra H\lra 0,
\]
subject to the equivalence relation generated by commutative diagrams
\[
\xymatrix{
0\ar[r]&H' \ar@{=}[d]\ar[r]&M_1\ar[d]\ar[r]&\cdots \ar[r]&M_j\ar[d]\ar[r]
&H\ar[r]\ar@{=}[d]&0\\
0\ar[r]&H' \ar[r]&N_1\ar[r]&\cdots \ar[r]&N_j\ar[r]&H\ar[r]&0.
}
\]
Note that $\Ext^\bullet(H,H')$ is uniquely divisible (i.e. $\Q$-module) as 
the multiplication by $m\in\Z_{>0}$ on $H$ or $H'$ is bijective.

\medskip

Let $S'=\Spec(B')$ be another affine smooth variety, and
$u\colon S'\to S$ a morphism of $V$-schemes.
Then, $u$ induces a pull-back functor
\[
    u^{\ast}\colon \Fil\hyphen F\hyphen\MIC(S)\to \Fil\hyphen F\hyphen\MIC(S')
\]
in a functorial way.
In fact, if $H=(H_{\dR}, H_{\rig}, c, \Phi, \nabla, \Fil^{\bullet})$ is an object
of $\Fil$-$F$-$\MIC(S)$, then we define
$u^{\ast}H=(H_{\dR}\otimes_{B_K}B'_K, H_{\rig}\otimes_{B^{\dag}_K}B'^{\dag}_K, c\otimes_{B^{\dag}_K}B'^{\dag}_K,
\Phi', \nabla', \Fil'^{\bullet})$,
where $\nabla'$ and $\Fil'^{\bullet}$ are natural pull-backs of $\nabla$ and $\Fil^{\bullet}$ respectively.
$\Phi'$ is the Frobenius structure obtained as follows;
regard $(H_{\rig}, \nabla_{\rig}, \Phi)$ as an overconvergent $F$-isocrystal on $S$
via the equivalence $F\hyphen\MIC(B_K^{\dag})\simeq F\hyphen\mathrm{Isoc}^{\dag}(B_k)$ \cite[Theorem 8.3.10]{LS}.
Then, its pull-back along $u_k\colon S'_k\to S_k$
is an overconvergent $F$-isocrystal on $S'_k$,
which is again identified with an object of $F\hyphen\MIC(B'^{\dag}_K)$;
it is of the form $(H'_{\rig}, \nabla'_{\rig}, \Phi')$, whose $\Phi'$ is the desired Frobenius structure.
\begin{rem}
Bannai constructed the category of syntomic coefficients \cite[1.8]{Bannai00}.
The reader will find that our $\FilFMIC(S)$ is adhoc and less polish than his, while
it is enough in the discussion in below.
\end{rem}

\subsection{$p$-adic Polylog functions}
For an integer $r$, we denote
the {\it $p$-adic polylog function} by
\begin{equation}\label{main-sect-eq1}
\ln^{(p)}_r(z):=\sum_{n\geq 1,\,p\not{\hspace{2pt}|}\,n}\frac{z^n}{n^r}
=
\lim_{s\to\infty}\frac{1}{1-z^{p^s}}
\sum_{1\leq n<p^s,\,p\not{\hspace{2pt}|}\,n}\frac{z^n}{n^r}
\in 
\Z_p\left\langle z,\frac{1}{1-z}\right\rangle
\end{equation}
where $\Z_p\langle z,(1-z)^{-1}\rangle$ denotes the $p$-adic completion of
the ring $\Z_p[z,(1-z)^{-1}]$.
As is easily seen, one has 
\[
\ln^{(p)}_r(z)=(-1)^{r+1}\ln^{(p)}_r(z^{-1}),\quad 
z\frac{d}{dz}\ln_{r+1}^{(p)}(z)=\mathrm{ln}_r^{(p)}(z).
\]
If $r\leq 0$, this is a rational function. Indeed
\[
\ln^{(p)}_0(z)=\frac{1}{1-z}-\frac{1}{1-z^p},\quad 
\ln^{(p)}_{-r}(z)=\left(z\frac{d}{dz}\right)^r\ln^{(p)}_0(z).
\]
If $r\geq 1$, it is no longer a rational function but an overconvergent function.
\begin{prop}\label{polylog-oc}
Let $r\geq 1$. Put $x:=(1-z)^{-1}$. Then $\ln^{(p)}_r(z)\in (x-x^2)\Z_p[x]^\dag$.
\end{prop}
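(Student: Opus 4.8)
The plan is to understand $\ln^{(p)}_r(z)$ as an iterated integral (antiderivative) of the rational function $\ln^{(p)}_0(z)$ and to track how the operator $z\,d/dz$ interacts with the change of variable $x = (1-z)^{-1}$, and then to verify the claimed divisibility and overconvergence by a careful induction on $r$. First I would rewrite everything in the variable $x$. Since $z = 1 - x^{-1} = (x-1)/x$, one computes $z\,d/dz = (x^2-x)\,d/dx$ (because $dz/dx = x^{-2}$, so $dz = x^{-2}dx$ and $z\,d/dz = \frac{x-1}{x}\cdot x^2\,d/dx = (x^2-x)\,d/dx$). Also the base case becomes
\[
\ln^{(p)}_0(z) = \frac{1}{1-z} - \frac{1}{1-z^p} = x - \frac{x^p}{x^p - (x-1)^p},
\]
and one checks directly that this lies in $(x - x^2)\Z_p[x]^\dag$: indeed $x^p - (x-1)^p = px^{p-1} - \binom{p}{2}x^{p-2} + \cdots$, whose constant term (in $x$) is $-(-1)^p = \pm 1$ — wait, more carefully $x^p-(x-1)^p$ at $x=0$ equals $0 - (-1)^p = \mp 1$, a unit, and at $x=1$ equals $1$, a unit; so $x^p/(x^p-(x-1)^p)$ is a unit times a power-bounded series and the difference with $x$ is divisible by $x$ and by $(x-1)$, hence by $x-x^2$ up to a unit. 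This handles $r \le 0$ as well, by applying $(x^2-x)\,d/dx$ repeatedly, since $(x^2-x)\,d/dx$ preserves $\Z_p[x]^\dag$ (derivative of an overconvergent series is overconvergent, and multiplication by a polynomial is fine).

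Next, for $r \ge 1$ I would argue by induction. The relation $z\frac{d}{dz}\ln^{(p)}_{r+1}(z) = \ln^{(p)}_r(z)$ translates to $(x^2-x)\frac{d}{dx} g_{r+1}(x) = g_r(x)$ where $g_r(x) := \ln^{(p)}_r(z)$ viewed in $x$. So the inductive step is: given $g_r \in (x-x^2)\Z_p[x]^\dag$, show that its "antiderivative" $g_{r+1}$ with respect to the operator $(x^2-x)\,d/dx$ again lies in $(x-x^2)\Z_p[x]^\dag$. Writing $g_r = (x-x^2)h_r$ with $h_r \in \Z_p[x]^\dag$, we need $g_{r+1}$ with $\frac{d}{dx}g_{r+1} = -h_r$, i.e. $g_{r+1} = -\int h_r\,dx$. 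The point is that term-by-term integration of a power series $\sum a_n x^n$ gives $\sum \frac{a_n}{n+1}x^{n+1}$, which need NOT be overconvergent or even $p$-integral in general (division by $n+1$). This is the main obstacle: controlling the denominators introduced by integration.

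To overcome it, I would use the defining series itself rather than blind integration: from the definition $\ln^{(p)}_{r+1}(z) = \sum_{p\nmid n} z^n/n^{r+1}$ one already knows $g_{r+1}$ is a specific element, and the task is only to show membership in $(x-x^2)\Z_p[x]^\dag$. I would instead establish the stronger statement by a direct structural description: prove that $\ln^{(p)}_r(z) \in (x-x^2)\Z_p[x]^\dag$ by showing its derivative lies in $\Z_p[x]^\dag$ AND that it vanishes at $x = 0$ and $x = 1$ (equivalently $z = 1$ and $z = 0$), then invoke the fact that an overconvergent function whose derivative is overconvergent and which vanishes at these two points is divisible by $x-x^2$ in $\Z_p[x]^\dag$. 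Vanishing at $z = 0$ is clear since the series $\sum z^n/n^r$ has no constant term; vanishing at $z = 1$ (i.e. $x = \infty$, wait — $z=1$ corresponds to $x = \infty$, so I should instead use $z = \infty$, i.e. $x = 0$, and the symmetry $\ln^{(p)}_r(z) = (-1)^{r+1}\ln^{(p)}_r(z^{-1})$) handles the behavior at $x = 0$: the series in $z^{-1}$ also has no constant term, so $g_r$ vanishes at $x = 0$ too. Thus the genuine content is the overconvergence of the derivative, which by the chain-rule identity reduces to the overconvergence of $\ln^{(p)}_{r-1}(z)$ divided by $x - x^2$, i.e. to $h_{r-1} \in \Z_p[x]^\dag$ — exactly the inductive hypothesis. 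So the induction closes once the base case $r = 0$ (done above, with $h_0 \in \Z_p[x]^\dag$ a unit) is in hand, and the only subtle lemma needed is: \emph{if $F \in \Z_p[x]^\dag$ vanishes at $x = 0$ and $x = 1$, then $F \in (x - x^2)\Z_p[x]^\dag$}. This last lemma I would prove by writing $F = (x-x^2)G$ with $G \in \Q_p[x]^\dag$ a priori, and checking that $G$ is actually $p$-integral: reduce $F$ mod $p$, note $\bar F$ vanishes at $0, 1$ in $\bar\F_p[x]$, so $\bar F = \overline{(x-x^2)}\,\bar G$ with $\bar G$ a polynomial over $\bar\F_p$ — but one must be careful when $p = 2$ since then $x - x^2 \equiv x + x^2 = x(x+1)$ and $x=1$ is the same as $x=-1=... $ no, $1 \ne 0$ mod $2$, so it is fine — and lift to conclude $G \in \Z_p[x]^\dag$ by successive approximation, using that $x - x^2$ is not a zero divisor and that the weak completion $\Z_p[x]^\dag$ is the right closure. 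I expect the mod-$p$ factorization argument plus the overconvergence bookkeeping of the radii to be the only places requiring genuine care.
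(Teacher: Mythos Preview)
Your overall architecture—change variables to $x=(1-z)^{-1}$, use $(x^2-x)\,d/dx=z\,d/dz$, and induct via antidifferentiation—matches the paper's.  But there is a real gap in the induction step, and it is precisely the one you flag and then do not close.

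You correctly observe that from $g_r=(x-x^2)h_r$ with $h_r\in\Z_p[x]^\dag$ one gets $\frac{d}{dx}g_{r+1}=-h_r\in\Z_p[x]^\dag$.  However, antidifferentiation only yields $g_{r+1}\in\Q_p[x]^\dag$, not $\Z_p[x]^\dag$: the coefficients pick up denominators $1/(n+1)$, and these are genuinely non-integral (think of $\log(1+x)$, whose derivative $1/(1+x)$ is in $\Z_p[x]^\dag$ while $\log(1+x)$ is not even in $\Z_p[[x]]$).  Your proposed fix—``show its derivative lies in $\Z_p[x]^\dag$ and that it vanishes at $x=0,1$, then invoke divisibility by $x-x^2$ in $\Z_p[x]^\dag$''—does not work, because the divisibility lemma you state at the end takes $F\in\Z_p[x]^\dag$ as a \emph{hypothesis}.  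You never establish that $g_{r+1}\in\Z_p[x]^\dag$; you only know $g_{r+1}'\in\Z_p[x]^\dag$ and $g_{r+1}(0)=g_{r+1}(1)=0$, which is not enough.

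The paper resolves this by decoupling integrality from overconvergence.  Its first sentence is the missing ingredient: since $1/n^r\in\Z_p^\times$ for $p\nmid n$, the defining series already shows $\ln^{(p)}_r(z)\in\Z_p\langle z,(1-z)^{-1}\rangle=\Z_p\langle x,x^{-1}\rangle$, so $\Z_p$-integrality is free and one only needs $\ln^{(p)}_r(z)\in(x-x^2)\Q_p[x]^\dag$.  The induction then runs entirely over $\Q_p$, where antidifferentiation does preserve $\Q_p[x]^\dag$ (the factor $1/(n+1)$ has $p$-adic absolute value at most $n+1$, which is dominated by any geometric decay $r_0^{-n}$ with $r_0>1$, at the cost of shrinking the radius).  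The base case is taken at $r=1$ rather than $r=0$, using the closed form $\ln^{(p)}_1(z)=p^{-1}\log\bigl((1-z^p)/(1-z)^p\bigr)=p^{-1}\log(1-pw(x))$ with $w(x)\in\Z_p[x]$, whose $\log$-expansion is visibly in $\Z_p[x]^\dag$.  Your base case $r=0$ is correct as far as it goes, and your divisibility lemma and the vanishing at $x=0,1$ are fine; what you are missing is exactly the one-line reduction to $\Q_p[x]^\dag$ via the a priori integrality of the series.
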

\begin{pf}
Since $\ln^{(p)}_r(z)$ has $\Z_p$-coefficients, it is enough to check
$\ln^{(p)}_r(z)\in (x-x^2)\Q_p[x]^\dag$.
We first note
\begin{equation}\label{lnk-eq1}
(x^2-x)\frac{d}{dx}\mathrm{ln}_{k+1}^{(p)}(z)=\mathrm{ln}_k^{(p)}(z).
\end{equation}
The last term of \eqref{main-sect-eq1} is written as
\[
\lim_{s\to\infty}\frac{1}{x^{p^s}-(x-1)^{p^s}}
\sum_{1\leq n<p^s,\,p\not{\hspace{2pt}|}\,n}\frac{x^{p^s}(1-x^{-1})^n}{n^r}.
\]
This shows that $\ln^{(p)}_r(z)$ is divided by $x-x^2$.
Let $w(x)\in \Z_p[x]$ be defined by
\[
\frac{1-z^p}{(1-z)^p}=x^p-(x-1)^p=1-pw(x).
\]
Then
\[
\ln_1^{(p)}(z)=p^{-1}\log\left(\frac{1-z^p}{(1-z)^p}\right)
    =-p^{-1}\sum_{n=1}^\infty\frac{p^nw(x)^n}{n}\in \Z_p[x]^\dag.
\]
This shows $\ln^{(p)}_1(z)\in(x-x^2)\Q_p[x]^\dag$,
as required in case $r=1$. 
Let
\[
-(x-x^2)^{-1}\mathrm{ln}_1^{(p)}(z)
    =a_0+a_1x+a_2x^2+\cdots+a_nx^n+\cdots\in\Z_p[x]^\dag.
\]
By \eqref{lnk-eq1} one has
\[
\ln_2^{(p)}(z)=c+x+\frac{a_1}{2}x^2+\cdots
    +\frac{a_n}{n+1}x^{n+1}-\cdots\in \Q_p[x]^\dag
\]
and hence $\ln_2^{(p)}(z)\in(x-x^2)\Q_p[x]^\dag$,
as required in case $r=2$.
Continuing the same argument,
    we obtain $\ln_r^{(p)}(z)\in (x-x^2)\Q_p[x]^\dag$ for every $r$.
\end{pf}
\begin{rem}
The proof shows that $\ln_r^{(p)}(z)$ converges on an open disk $|x|<|1-\zeta_p|$.  
\end{rem}

\subsection{Polylog objects in $\FilFMIC$}\label{poly-obj-sect}
\begin{defn}\label{log-obj-def}
Let $B=V[T,(1-T)^{-1}]$ and $\sigma_B$ a $p$-th Frobenius such that $\sigma_B(T)=T^p$.
We define the {\it log object} $\sLog(1-T)$ in $\FilFMIC(S)$ as follows.
\begin{itemize}
    \setlength{\itemsep}{0pt}
    \item $\sLog(1-T)_{\dR}$ is a free $B_K$-module of rank two; $\sLog(1-T)_{\dR}=B_Ke_{-2}\oplus B_Ke_0$.
    \item $\sLog(1-T)_{\rig}\defeq\sLog(1-T)_{\dR}\otimes_{B_K}B_K^{\dag}=B_K^{\dag}e_{-2}\oplus B_K^{\dag}e_0$.
    \item $c$ is the natural isomorphism.
    \item $\Phi$ is the $B^{\dag}_K$-linear morphism defined by
        \[
            \Phi(e_{-2})=p^{-1}e_{-2},\quad \Phi(e_0)=
            \ln^{(p)}_1(T)e_{-2}+e_0.
            \]
    \item $\nabla$ is the connection defined by $\nabla(e_{-2})=0$ and $\nabla(e_0)=\frac{dT}{T-1}e_{-2}$.
    \item $\Fil^{\bullet}$ is defined by
        \[
            \Fil^{i}\sLog(1-T)_{\dR}=\begin{cases} \sLog(1-T)_{\dR} & \text{ if } i\leq -1,\\
                B_Ke_0 & \text{ if } i= 0,\\
               0 & \text{ if } i> 0.\end{cases}
        \]
\end{itemize}
\end{defn}
Note that $\ln^{(p)}_1(T)$ belongs to $B^{\dag}$ by 
Proposition \ref{polylog-oc}.
It is straightforward to check that these data define an object of $\Fil$-$F$-$\MIC(S)$.
It would be convenient here to remark that we have a situation where
we need to regard $\sLog(1-T)_{\dR}$ not only as usual connections on $\bA^1_K\setminus\{1\}$ but as a logarithmic connections on $\bP^1_K$ with logarithmic singularity along $\{1,\infty\}$
(It is easy to check that the connection $\nabla$ has logarithmic singularity along $\{1,\infty\}$).
Similarly, the overconvergent $F$-isocrystal $\sLog(1-T)_{\rig}$ with the connection $\nabla^{\an}$ and $\Phi$ can be
naturally considered as an log.\ $F$-isocrystal on the log.\ scheme $(\bP^1_k,\{1,\infty\})$
overconvergent along $\{1,\infty\}$.

\medskip

For a general $S=\Spec B$ and $f\in B^\times$, we define
the log object $\sLog(f)$ to be the pull-back $u^*\sLog(1-T)$ where $u:\Spec B\to 
\Spec V[T,(1-T)^{-1}]$ is given by $u(1-T)=f$.
This is fit into the exact sequence
\begin{equation}\label{log-obj-gp}
    0\lra B_K(1)\lra \sLog(f)\lra B_K\lra 0
\end{equation}
in $\FilFMIC(S)$.

\begin{defn}\label{poly-obj-def}
Let $C=V[T, T^{-1}, (1-T)^{-1}]$, and $\sigma_C$ a $p$-th Frobenius such that $\sigma_C(T)=T^p$.
Let $n\geq 1$ be an integer.
    We define the $n$-th polylog object $\sPol_n(T)$ of $\Fil\hyphen F\hyphen\MIC(\Spec C)$ as follows.
    \begin{itemize}
        \item $\sPol_n(T)_{\dR}$ is a free $C_K$-module of rank $n+1$;
            $\sPol_n(T)_\dR=\bigoplus_{j=0}^n C_K\, e_{-2j}.$
                 \item 
                 $\sPol_n(T)_{\rig}\defeq
                 \sPol_n(T)_{\dR}\otimes_{C_K}C_K^{\dag}$.
        \item $c$ is the natural isomorphism.
        \item $\Phi$ is the $C_K^{\dag}$-linear morphism defined by
            \[
                \Phi(e_0)=e_0-\sum_{j=1}^n(-1)^j\ln_j^{(p)}(T)e_{-2j},\quad
                \Phi(e_{-2j})=p^{-j}e_0,\quad(j\geq1).
                    \]
    \item $\nabla$ is the connection defined by 
    \[\nabla(e_0)=\frac{\rd T}{T-1}e_{-2},\quad
    \nabla(e_{-2j})=\frac{\rd T}{T}e_{-2j-2},\quad(j\geq1)
    \]
    where $e_{-2n-2}:=0$.
    \item $\Fil^{\bullet}$ is defined by 
 $\Fil^m\sPol_n(T)_{\dR}=\bigoplus_{0\leq j\leq -m} C_K\, e_{-2j}.$
 In particular, $\Fil^m\sPol_n(T)_{\dR}=\sPol_n(T)_{\dR}$ if $m\leq -n$ and $=0$ if $m\geq 1$.
    \end{itemize}
When $n=2$, we also write $\sDilog(T)=\sPol_2(T)$, and call it the {\it dilog object}.
\end{defn}
For a general $S=\Spec B$ and $f\in B$ satisfying $f,1-f\in B^\times$, 
we define
the polylog object $\sPol_n(f)$ to be the pull-back $u^*\sPol_n(T)$ where $u:\Spec B\to 
\Spec V[T,T^{-1},(1-T)^{-1}]$ is given by $u(T)=f$.
When $n=1$, $\sPol_1(T)$ coincides with 
the log object $\sLog(1-T)$ in $\FilFMIC(C)$.

\subsection{Relative cohomologies.}
In this subsection, we discuss objects in $\Fil$-$F$-$\MIC(S)$
arising as a relative cohomology of smooth morphisms (with a condition about compactification).
Firstly, we describe the situation for projective case.

\subsubsection*{Relative de Rham and rigid cohomology of proper smooth morphisms.}
Let $u\colon X\to S=\Spec(B)$ be a projective smooth morphism of smooth $V$-schemes.
Then, for each $i\geq 0$, it defines an object $H^i(X/S)$ of $\Fil$-$F$-$\MIC(S)$ in the following manner.
In fact, $H^i(X/S)=(H^i_{\dR}(X_k/S_k), H^i_{\rig}(X_K/S_K), c, \nabla, \Phi, \Fil^{\bullet})$ is defined as follows:
\begin{itemize}
    \setlength{\itemsep}{0pt}
    \item We define $H^i_{\dR}(X_K/S_K)$ to be the $i$-th relative de Rham cohomology $H^i_{\dR}(X_K/S_K)$
        and $\nabla$ \resp{$\Fil^{\bullet}$} to be the Gauss--Manin connection \resp{the Hodge filtration} on $H^i_{\dR}(X_K/S_K)$.
\item 
    Let $S\hookrightarrow\overline{S}$ be a projective completion of $S$ and
    let $\overline{u}\colon\overline{X}\to\overline{S}$ be a projective completion of the composite morphism $X\to S\hookrightarrow\overline{S}$
        Let $R^iu_{\rig,\ast}j_{X_k}^{\dag}\sO_{]\overline{X}_k[}$ denote the $i$-th relative rigid cohomology of $u_k\colon X_k\to S_k$.
    This is an overconvergent $F$-isocrystal on $S$ \cite[Th\'eor\`eme 5]{Berthelot81};
        equivalently (by taking the global section on $]\overline{S}_k[$), it is regarded as a coherent $B_K^{\dag}$-module with an integrable connection and a Frobenius structure.
        We define $H^i_{\rig}(X_k/S_k)$ to be this coherent $B_K^{\dag}$-module
        and $\Phi$ to be the Frobenius structure.
    \item $c$ is the comparison isomorphism between the rigid and the algebraic de Rham cohomology, which is constructed in the following manner.

We consider the natural morphism
        \begin{equation}
            \label{eq:comparisonisom}
            R^i]\overline{u}_k[_\ast\Omega^{\bullet}_{]\overline{X}_k[/]\overline{S}_k[}\otimes j_{S_k}^{\dag}\sO_{]\overline{S}_k[}
            \longrightarrow R^i]\overline{u}_k[_\ast j_{X_k}^{\dag}\Omega^{\bullet}_{]\overline{X}_k[/]\overline{S}_k[}
        \end{equation}
        and firstly  check that this is an isomorphism.
        Because it is a morphism of coherent $j^{\dag}_{S_k}\sO_{]\overline{S}_k[}$-modules
        (which follows from the properness of $]\overline{u}[_k$ and Kiehl's theorem),
        it suffices to show that its restriction to $]S_k[$ is an isomorphism.
        Moreover, it suffices to prove that it is an isomorphism at each fiber,
        and in fact this is true because of the base change theorem and
        the comparison of the (absolute) algebraic de Rham cohomology
        and the rigid cohomology \cite[(7)]{Gerkmann08}.

        Now, the desired comparison isomorphism $c$ is
        obtained by taking its global section on $(S_K)^{\an}$ of (\ref{eq:comparisonisom}).
        In fact, the global section of the target of (\ref{eq:comparisonisom}) on $(S_K)^{\an}$ is, being isomorphic
        to that on $]\overline{S}_k[$, nothing but the relative rigid cohomology $H^i_{\rig}(X_k/S_k)$.
        The global section of the source on $(S_K)^{\an}$ is
        \[
        \Gamma\left((S_K)^{\an}, R^i]\overline{u}_k[_{\ast}\Omega^{\bullet}_{]\overline{X}_k[/]\overline{S}_k[}\right)\otimes B^{\dag}_K,
    \]
        and the first tensorand is isomorphic to the algebraic de Rham cohomology $H^i_{\dR}(X_K/S_K)$ \cite[Theorem IV.4.1]{AndreBaldassarri}.
        We therefore get the comparison isomorphism
        \[
            c\colon H^i_{\dR}(X_K/S_K)\otimes_{B_K}B_K^{\dag}\xrightarrow{\quad\cong\quad} H^i_{\rig}(X_k/S_k).
        \]

        Note also that the Gauss-Manin connection on $H^i_{\rig}(X_k/S_k)$ (as a rigid cohomology)
        coincides with the connection induced by the Gauss-Manin connection $\nabla$ on $H^i_{\dR}(X_K/S_K)$ via $c$.

\end{itemize}
\subsubsection*{Relative cohomologies of non-projective cases.}
We will also need the ``relative cohomology object'' for families of open varieties.
Let $u\colon U\to S$ be a smooth morphism of smooth $V$-schemes of pure relative dimension,
and assume that there exists an open immersion $U\hookrightarrow X$
and a projective smooth morphism $u_X\colon X\to S$ that extends $u$.
Moreover, assume that $D\defeq X\setminus U$ is a relative normal crossing divisor with smooth components.
Let $S\hookrightarrow\overline{S}$ be a projective completion with $\overline{S}\setminus S$ a divisor of $\overline{S}$, and let $\overline{u}\colon\overline{X}\to\overline{S}$ be a projective completion 
of the composite morphism $X\xrightarrow{u_X}S\hookrightarrow\overline{S}$.

Let us construct an object
\[
    H^i(U/S)=(H^i_{\rig}(U_k/S_k), H^i_{\dR}(U_K/S_K), c, \nabla, \Phi, \Fil^{\bullet})
\]
of $\Fil$-$F$-$\MIC(S)$.
Let $H^i_{\rig}(U_k/S_k)$ be the $i$-th relative rigid cohomology of $u_k$,
namely, the global section of $R^i]\overline{u}_k[_{\ast}j^{\dag}_{U_k}\sO_{]\overline{X}_k[}$ on $]\overline{S}_k[$ (note that this is also its global section on $S_K^{\an}$);
the coherence of this module is explained later.
Let $H^i_{\dR}(U_K/S_K)$ be the relative algebraic de Rham cohomology of $u_K$.
The data $\nabla, \Phi$ and $\Fil^{\bullet}$ can be defined in the same way 
as in the projective smooth case.
The remaining task is to construct the comparison isomorphism $c$.

Put $T\defeq\overline{u}^{-1}(\overline{S}\setminus S)$; this is a divisor of $\overline{X}$.
Let $\overline{D}$ is the closure of $D$ in $\overline{X}$.
In order to construct the isomorphism,
we need the logarithmic theory of de Rham and rigid cohomology.
Let $\overline{X}^{\sharp}\defeq(\overline{X}, \overline{D})$ be the $V$-scheme $\overline{X}$ endowed with the logarithmic structure associated to $\overline{D}$;
$\overline{X}_K^{\an,\sharp}$ and $]\overline{X}_k[^{\sharp}$ are defined in a similar manner.

Then, we have a canonical comparison isomorphism
\begin{equation}
    R^i]\overline{u}_k[_{\ast}\left(j^{\dag}_{X_k}\Omega^{\bullet}_{]\overline{X}_k[^{\sharp}/]\overline{S}_k[}\right)
    \xrightarrow{\quad\cong\quad}
    R^i]\overline{u}_k[_{\ast}\left(j^{\dag}_{U_k}\Omega^{\bullet}_{]\overline{X}_k[/]\overline{S}_k[}\right)
    \label{eq:rigid-logrigid}
\end{equation}
between the relative log-rigid cohomology and the relative rigid cohomology;
in fact, this is the comparison isomorphism \cite[1.1.1]{CaroTsuzuki} in the case where the ambient morphism $\overline{u}$
is not necessarily smooth but smooth around $X_k$ \cite[1.1.2(4)]{CaroTsuzuki}.
The sheaf $R^i]\overline{u}_k[_{\ast}\left(j^{\dag}_{X_k}\Omega^{\bullet}_{]\overline{X}_k[^{\sharp}/]\overline{S}_k[}\right)$ is a coherent $j_{S_k}^{\dag}\sO_{]\overline{S}_k[}$-module by the argument as in the first half of the proof of \cite[4.1.1]{Tsuzuki03};
therefore, so is 
    $R^i]\overline{u}_k[_{\ast}\left(j^{\dag}_{U_k}\Omega^{\bullet}_{]\overline{X}_k[/]\overline{S}_k[}\right)$.

Now, consider the canonical morphism
\begin{equation}
    R^i]\overline{u}_k[_{\ast}\Omega^{\bullet}_{]\overline{X}_k[^{\sharp}/]\overline{S}_k[}\otimes j^{\dag}_{S_k}\sO_{]\overline{S}_k[}
\longrightarrow
    R^i]\overline{u}_k[_{\ast}\left(j^{\dag}_{X_k}\Omega^{\bullet}_{]\overline{X}_k^{\sharp}[/]\overline{S}_k[}\right).
    \label{eq:log-comparison}
\end{equation}
By the same argument as in the proper smooth case,
we may show that this morphism is an isomorphism.
Namely, since it is a morphism of coherent $j^{\dag}_{S_k}\sO_{]\overline{S}_k[}$-modules,
it suffices to prove that its restriction to $]S_k[$ is an isomorphism;
this can be checked fiber-wise and therefore we may reduce to the absolute case;
it follows from a comparison theorem of
Baldassarri and Chiarellotto \cite[Corollary 2.5]{BaldassarriChiarellotto}
together with the usual comparison of log.\ de Rham cohomology and de Rham cohomology.

Now, let us take the global section of the morphism (\ref{eq:log-comparison}) on $S_K^{\an}$.
The global section of the first tensorand of the source on $S_K^{\an}$ is isomorphic to the
algebraic log.\ de Rham $H^i_{\dR}(X_K^{\sharp}/S_K)$,
thus to the algebraic de Rham cohomology $H^i_{\dR}(U_K/S_K)$.
The global section of the target on $S_K^{\an}$ is the $i$-th rigid cohomology
$H^i_{\rig}(U_k/S_k)$ by (\ref{eq:rigid-logrigid}).
We thus get an isomorphism
\begin{equation}
    c\colon H^i_{\dR}(U_K/S_K)\otimes B_K^{\dag}\longrightarrow H^i_{\rig}(U_k/S_k)
    \label{eq:comparisonisom2}
\end{equation}
of coherent $B_K^{\dag}$-modules.

\begin{prop}[Gysin exact sequence]
    Let $\overline{u}\colon X\to S$ be a proper smooth morphism of relative dimension one,
    let $D$ be a divisor of $X$ such that $\overline{u}|_D$ is finite etale,
    and put $U\defeq X\setminus D$ and $u\defeq\overline{u}|_U\colon U\to S$.
    In the above notation, assume that $X$ is of relative dimension one over $S$
    and $D$ is a relative smooth divisor.
    Then, there exists an exact sequence
    \[
        0\to H^1(X/S)\to  H^1(U/S)\to H^0(D/S)(-1)\to H^2(X/S)\to 0.
    \]
    \label{prop:Gysin}
\end{prop}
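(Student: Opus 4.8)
The plan is to obtain the four-term sequence as the cohomology of the Poincaré residue exact sequence of (log-)de Rham and (log-)rigid complexes, and then to verify, term by term, that the three resulting arrows are morphisms in $\FilFMIC(S)$ and that the sequence is exact in the sense defined above. On the algebraic de Rham side we begin with the residue short exact sequence of complexes of $\sO_{X_K}$-modules
\[
0\lra \Omega^{\bullet}_{X_K/S_K}\lra \Omega^{\bullet}_{X_K/S_K}(\log D_K)\xrightarrow{\ \res\ } a_{K,\ast}\Omega^{\bullet-1}_{D_K/S_K}\lra 0,
\]
where $a\colon D\hra X$ is the closed immersion; since $\overline{u}|_D$ is finite \'etale, $\Omega^{\bullet}_{D_K/S_K}=\sO_{D_K}$ sits in degree $0$, so applying $Ru_{K,\ast}$ produces a long exact sequence
\[
\cdots\lra H^i_{\dR}(X_K/S_K)\lra H^i_{\dR}(U_K/S_K)\lra H^{i-1}_{\dR}(D_K/S_K)\lra H^{i+1}_{\dR}(X_K/S_K)\lra\cdots .
\]
Because $D\neq\emptyset$, the fibres of $u$ are smooth affine curves, whence $H^2_{\dR}(U_K/S_K)=0$; together with $H^{-1}_{\dR}(D_K/S_K)=0$ this truncates the long exact sequence to the asserted four-term one, the first arrow being restriction, the second the residue, and the third the Gysin map. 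Running the identical residue sequence for the overconvergent logarithmic complexes on $]\overline{X}_k[^{\sharp}$ used above to define $H^i_{\rig}(U_k/S_k)$ yields the matching four-term sequence on the $H_{\rig}$-parts, again via $H^2_{\rig}(U_k/S_k)=0$ for smooth affine curve fibres. Since these two residue sequences are the cohomology sequences of compatible morphisms of complexes, the comparison isomorphisms $c$ constructed above for $H^1(X/S)$, $H^1(U/S)$, $H^0(D/S)$ and $H^2(X/S)$ intertwine them.

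It remains to check the decorations. For $\nabla$: restriction, residue and Gysin all arise functorially from the same families of (log-)complexes, so horizontality for the Gauss--Manin connections is automatic. For $\Fil^{\bullet}$: the residue sequence of complexes is strictly compatible with the (appropriately shifted) stupid filtration that defines the Hodge filtrations here, and the residue lowers the Hodge filtration by one step, which is precisely absorbed by the Tate twist $(-1)$ on $H^0(D/S)$; consequently $\Fil^i$ of the de Rham sequence stays exact for every $i$ (all terms vanish for $i\ge 2$; for $i=1$ one uses in addition $\Fil^1H^2_{\dR}(U_K/S_K)=H^1(X_K,\Omega^1_{X_K/S_K}(\log D_K))=0$, again the affine-curve vanishing), which is exactly exactness in $\FilFMIC(S)$. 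For $\Phi$: this is what forces the twist $(-1)$ on the rigid side as well. In a local coordinate $z$ cutting out a component of $D$, a $p$-th Frobenius sends $z$ to a unit times $z^p$, so $d\log z\mapsto p\,d\log z$ modulo exact forms; hence the naive residue intertwines the $\sigma^{\ast}$-Frobenii only up to the scalar $p$, and twisting the target by $(-1)$ --- which multiplies $\Phi$ by $p^{-1}$, as in the definition of the Tate object --- turns it into a morphism of Frobenius structures. The parallel computation for the Gysin map (which picks up an extra factor $p$) shows $H^0(D/S)(-1)\to H^2(X/S)$ respects $\Phi$; recall $H^2(X/S)\cong\sO_{S_K}(-1)$.

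Exactness at the outer terms is then read off from the long exact sequence: surjectivity onto $H^2(X/S)$ and exactness at $H^0(D/S)(-1)$ reduce to $H^2(U/S)=0$ (de Rham and rigid, smooth affine curve fibres), while injectivity of $H^1(X/S)\to H^1(U/S)$ reduces to the restriction $H^0(X/S)\to H^0(U/S)$ being an isomorphism (de Rham $H^0$ of a smooth fibre agrees with that of a dense open). Assembling these facts, the sequence is exact on the $H_{\dR}$-parts compatibly with $\Fil^{\bullet}$, on the $H_{\rig}$-parts, and under $c$, $\nabla$ and $\Phi$, hence is an exact sequence in $\FilFMIC(S)$.

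The step I expect to demand the most care is not the residue calculus but fitting it into the present formalism: one must check that the logarithmic rigid residue sequence is the cohomology of an honest short exact sequence of the overconvergent complexes used above (so that coherence and the log-de Rham/log-rigid comparison are preserved), and one must pin down the $p$-power normalizations of the residue and Gysin maps so that the Tate twists come out precisely as stated. The absolute counterparts of both facts are classical (Berthelot; Tsuzuki; Caro--Tsuzuki), and their relative logarithmic versions are covered by the same references already used to construct $H^i(U/S)$; the residual work is to transport these compatibilities through the comparison isomorphism $c$.
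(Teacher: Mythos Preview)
Your proposal is correct and follows essentially the same route as the paper: both obtain the sequence from the residue/Gysin exact sequences on the de Rham and rigid sides and then verify compatibility with $\nabla$, $\Fil^{\bullet}$, $\Phi$, and the comparison $c$. The paper's argument is simply much terser, invoking the de Rham Gysin sequence as a standard fact and citing \cite[6.1.2]{Solomon} for the Frobenius-compatible rigid Gysin sequence, whereas you spell out the residue short exact sequence of complexes, the affine-curve vanishing that truncates the long exact sequence, and the $p$-power bookkeeping behind the Tate twist.
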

\begin{pf}
    It is a standard fact about de Rham cohomology that we have an exact sequence 
    $0\to H^1_{\dR}(X_K/S_K)\to H^1_{\dR}(U_K/S_K)\to H^0_{\dR}(D_K/S_K)(-1)\to H^2_{\dR}(X_K/S_K)\to 0$
    the morphisms in which are horizontal and compatible with respect to $\Fil$.
    The relative rigid cohomology also have a Gysin exact sequence \cite[6.1.2]{Solomon}
    compatible with Frobenius morphisms.
    The construction shows that they are compatible with the comparison isomorphisms,
    which shows the proposition.
\end{pf}
\subsection{Extensions associated to $K_2$-symbols}
Let $f,g\in \sO(U)^\times$.
Recall from \S \ref{poly-obj-sect} the log objects $\sLog(f)$ and $\sLog(g)$ in 
$\FilFMIC(U)$.
The exact sequences \eqref{log-obj-gp} gives rise to a 2-extension
\[
    0\lra \sO_{U_K}(2)\lra \sLog(f)(1)\lra\sLog(g)\lra \sO_{U_K}\lra 0,
\]
which represents the cup-product $\sLog(f)(1)\cup\sLog(g)$ in
$\Ext^2(\sO_{U_K},\sO_{U_K}(2))$.

\begin{prop}   \label{2-ext-welldef}
$\sLog(f)(1)\cup\sLog(f)=0$ and $\sLog(f)(1)\cup\sLog(1-f)=0$.
Hence the homomorphism
\[
K^M_2(\O(U))\lra\Ext^2(\sO_{U_K},\sO_{U_K}(2)),\quad
\{f,g\}\mapsto\sLog(f)(1)\cup\sLog(g)
\]
is well-defined (note $\sLog(-f)=\sLog(f)$ by definition).
\end{prop}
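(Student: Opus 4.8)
The plan is to reduce the vanishing of the two cup products to the classical Steinberg-type relations, realized concretely at the level of $\FilFMIC$-extensions. By Proposition \ref{2-ext-welldef}'s own construction, $\sLog(f)(1)\cup\sLog(g)$ is the Yoneda class of the spliced $2$-extension obtained from the two defining short exact sequences \eqref{log-obj-gp}, so it suffices to produce, in each of the two cases, an explicit splitting or null-homotopy of that $2$-extension. Since every $\Ext$-group here is a $\Q$-vector space (as remarked in the excerpt), and since pullback along $u\colon S\to \Spec V[T,T^{-1},(1-T)^{-1}]$ is exact and compatible with cup products, I would first reduce to the universal situation $f=T$, working over $C=V[T,T^{-1},(1-T)^{-1}]$; the general case follows by applying $u^\ast$.

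For the relation $\sLog(f)(1)\cup\sLog(f)=0$: the point is that the $2$-extension class is the image of $\sLog(f)(1)\cup\sLog(f)$ under the product on $\Ext$, but more usefully it factors through the diagonal and the map $\sLog(f)\otimes\sLog(f)\to\wedge^2$, so antisymmetry of the cup product on degree-one classes forces $2\cdot(\sLog(f)(1)\cup\sLog(f))=0$, and uniqueness of divisibility kills it. Concretely, I expect to exhibit an endomorphism of the middle term $\sLog(f)(1)$ (a ``transposition'' swapping the two copies of the log data up to the appropriate Tate twist) which induces $-1$ on the outer terms and realizes the $2$-extension as its own negative; combined with $\Q$-linearity this gives vanishing. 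This step is essentially formal once one checks compatibility with $\Phi$, $\nabla$, and $\Fil^\bullet$, using the explicit formulae $\Phi(e_0)=\ln_1^{(p)}(T)e_{-2}+e_0$, $\nabla(e_0)=\frac{dT}{T-1}e_{-2}$ from Definition \ref{log-obj-def}.

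For the Steinberg relation $\sLog(T)(1)\cup\sLog(1-T)=0$: this is the substantive case. The strategy is to identify the spliced $2$-extension with (a Tate twist of) the dilog object $\sDilog(T)=\sPol_2(T)$ from Definition \ref{poly-obj-def}, and then to observe that $\sPol_2(T)$, viewed as a $2$-extension of $\O_{C_K}$ by $\O_{C_K}(2)$ via its three-step filtration $0\subset C_Ke_{-4}\subset C_Ke_{-4}\oplus C_Ke_{-2}\subset \sPol_2(T)_{\dR}$, represents precisely $\sLog(T)(1)\cup\sLog(1-T)$. This is forced by matching the connection and Frobenius: on the de Rham side, $\nabla(e_0)=\frac{dT}{T-1}e_{-2}$ and $\nabla(e_{-2})=\frac{dT}{T}e_{-4}$ reproduce the connection matrices of $\sLog(1-T)$ and (after the twist) $\sLog(T)$ respectively; on the rigid side, the Frobenius coefficients $-\ln_1^{(p)}(T)$, $+\ln_2^{(p)}(T)$ match the product of the Frobenius structures of $\sLog(1-T)$ and $\sLog(T)(1)$, using the differential relation $z\frac{d}{dz}\ln_2^{(p)}(z)=\ln_1^{(p)}(z)$ recorded after \eqref{main-sect-eq1}. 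Having made this identification, it remains to show $\sPol_2(T)$ is a \emph{split} $2$-extension, i.e. that its class in $\Ext^2(\O_{C_K},\O_{C_K}(2))$ vanishes; equivalently, that the middle-length-one sub and quotient extensions can be chosen to cancel. Here I would produce an explicit splitting after breaking the $2$-extension into two Yoneda-composable $1$-extensions and showing their classes in $\Ext^1$ are negatives of each other — which is exactly the $p$-adic avatar of the functional equation $\mathrm{Li}_2(T)+\mathrm{Li}_2(1-T)\equiv \mathrm{const}$ encoded by the $\ln_2^{(p)}$-terms. The main obstacle, and where the real work lies, is verifying that the candidate splitting maps are morphisms in $\FilFMIC$ — i.e. simultaneously horizontal for $\nabla$, compatible with $\Phi$ (this forces a genuine identity among $\ln_1^{(p)}$, $\ln_2^{(p)}$ and the Frobenius substitution $T\mapsto T^p$, not just a formal one), and strict for $\Fil^\bullet$ — rather than just isomorphisms of the underlying modules.
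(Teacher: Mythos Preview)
Your reduction to the universal case over $C=V[T,T^{-1},(1-T)^{-1}]$ matches the paper. For $\sLog(T)(1)\cup\sLog(T)=0$, your antisymmetry argument ($a\cup a=-a\cup a$ for $a\in\Ext^1$, hence $a\cup a=0$ by $\Q$-divisibility) is a valid and more economical route than the paper's, which instead runs the same mechanism as for Steinberg with $\mathrm{Sym}^2\sLog(T)$ playing the role of $\sDilog(T)$. Your ``concrete transposition endomorphism'' is too vague as written (an endomorphism of $\sLog(f)(1)$ alone cannot do this), but the graded-commutativity argument stands on its own.

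For the Steinberg relation there is a real gap. You correctly isolate $\sDilog(T)$ with its filtration $0\subset W_{-4}\subset W_{-2}\subset W_0$ and recognize that the spliced 2-extension
\[
0\lra W_{-4}\lra W_{-2}\lra W_0/W_{-4}\lra W_0/W_{-2}\lra 0
\]
represents $\sLog(T)(1)\cup\sLog(1-T)$. But your plan to finish --- ``show their classes in $\Ext^1$ are negatives of each other'' --- is not well-posed: the two constituent 1-extensions live in different groups, $\Ext^1(\O_{C_K}(1),\O_{C_K}(2))$ and $\Ext^1(\O_{C_K},\O_{C_K}(1))$, and $[\sLog(T)]$, $[\sLog(1-T)]$ are independent classes, not opposites. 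No dilogarithm functional equation enters, and no $\Phi$-identity needs checking beyond what is already encoded in Definition~\ref{poly-obj-def}.

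The point you are missing is purely formal: \emph{any} 2-extension obtained by splicing a three-step filtration on a single object is zero in $\Ext^2$. In long-exact-sequence language, the class of $0\to W_{-2}/W_{-4}\to W_0/W_{-4}\to W_0/W_{-2}\to 0$ is the pushforward of $[0\to W_{-2}\to W_0\to W_0/W_{-2}\to 0]$ along $W_{-2}\twoheadrightarrow W_{-2}/W_{-4}$, and Yoneda-composing with $[0\to W_{-4}\to W_{-2}\to W_{-2}/W_{-4}\to 0]$ is two consecutive maps in the long exact sequence attached to $0\to W_{-4}\to W_{-2}\to W_{-2}/W_{-4}\to 0$, hence zero. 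The paper realizes this with two explicit equivalences of 2-extensions: first replace the right half by its pullback along $W_0\twoheadrightarrow W_0/W_{-4}$, getting middle terms $C_K(2)\oplus W_{-2}\to W_0$; then project to the split 2-extension $0\to C_K(2)\xrightarrow{\id}C_K(2)\xrightarrow{0}C_K\xrightarrow{\id}C_K\to 0$. Every arrow is an inclusion, projection, or addition map, hence automatically a morphism in $\FilFMIC$; the ``real work'' you anticipate is simply not there.
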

To prove this, it is enough to show the following.
\begin{lem}
Let $C=V[T,T^{-1},(1-T)^{-1}]$. Then
    $\sLog(T)(1)\cup\sLog(T)=0$ and
    $\sLog(T)(1)\cup\sLog(1-T)=0$ in 
    $\Ext^2_{\Fil\hyphen F\hyphen\MIC(C)}(C_K, C_K(2))$.
\end{lem}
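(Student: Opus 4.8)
The plan is to construct explicit homotopies (null-chain homotopies) that witness the vanishing of the two cup-products as Yoneda 2-extensions in $\FilFMIC(C)$. Recall that $\sLog(T)(1)\cup\sLog(g)$ is represented by the 2-extension obtained by splicing the two short exact sequences $0\to C_K(2)\to\sLog(T)(1)\to C_K(1)\to 0$ and $0\to C_K(1)\to\sLog(g)\to C_K\to 0$ along the common term $C_K(1)$. To show such a 2-extension is trivial it suffices to exhibit an object $M$ of $\FilFMIC(C)$ together with a commutative diagram of exact sequences showing that the splice factors through the trivial 2-extension; concretely, it is enough to produce a single object $N$ fitting into $0\to C_K(2)\to N\to\sLog(g)\to 0$ which, when pushed along $\sLog(g)\to C_K$, is the pullback of $\sLog(T)(1)\to C_K(1)$ along that same map — equivalently, to produce a $3$-step filtered $F$-isocrystal (a ``double logarithm'' object) whose associated $2$-extension is the given cup product, and then to show it is split. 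For the first identity $\sLog(T)(1)\cup\sLog(T)=0$ the relevant object is built from the symmetric square of $\sLog(T)$; the antisymmetry $\{T,T\}=\{T,-1\}$ together with $\sLog(-1)$ being (up to the relevant twist) a trivial/torsion object, hence zero in the uniquely divisible group $\Ext^2$, gives the vanishing. For the second identity $\sLog(T)(1)\cup\sLog(1-T)=0$ the relevant object is precisely the dilog object $\sDilog(T)=\sPol_2(T)$ of Definition \ref{poly-obj-def}: by construction its de Rham connection, its filtration, and its Frobenius $\Phi$ are exactly those of the Yoneda splice representing $\{T,1-T\}$, and the object $\sPol_2(T)$ itself exhibits that splice as the boundary of a length-one shorter complex, i.e. as $0$ in $\Ext^2$.

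Here are the steps in order. First, I would pin down, in coordinates, the $2$-extension representing $\sLog(T)(1)\cup\sLog(1-T)$: write down the middle term $\sLog(1-T)(1)$-to-$\sLog(T)$ splice explicitly on the level of the bases $\{e_{-2},e_0\}$ of each $\sLog$, compute the resulting rank-$4$ de Rham module with its connection (a strictly upper-triangular matrix of $1$-forms involving $dT/T$ and $dT/(T-1)$), its Frobenius (involving $\ln^{(p)}_1(T)$ in the off-diagonal entries), and its two-step-apart filtration. Second, I would compare this with $\sPol_2(T)$: the point is that $\sPol_2(T)$ has rank $3$, and the rank-$4$ splice above is the pushout/pullback of $\sPol_2(T)$ along the canonical maps $C_K(2)\hookrightarrow\sPol_2(T)$ and $\sPol_2(T)\twoheadrightarrow C_K$ — because the connection matrix of $\sPol_2(T)$, namely $\nabla e_0=\frac{dT}{T-1}e_{-2}$, $\nabla e_{-2}=\frac{dT}{T}e_{-4}$, is exactly the composite of the two $\sLog$ connections, and similarly for $\Phi$ since $\Phi e_0=e_0+\ln^{(p)}_1(T)e_{-2}-\ln^{(p)}_2(T)e_{-4}$ matches the composite after accounting for the recursion $z\frac{d}{dz}\ln^{(p)}_2=\ln^{(p)}_1$ (equation \eqref{main-sect-eq1} and Proposition \ref{polylog-oc}, which also guarantees $\ln^{(p)}_2(T)\in C^\dag$ so that the object is legitimately in $\FilFMIC$). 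A $2$-extension that factors through a rank-$3$ object in this way is, by the standard yoga of Yoneda $\Ext$, equivalent to the zero $2$-extension; I would spell this out with the explicit equivalence diagram (the one displayed in the excerpt). Third, for $\sLog(T)(1)\cup\sLog(T)$, I would either run the same argument with the object $\mathrm{Sym}^2\sLog(T)$ in place of $\sPol_2$, or — more cheaply — invoke bilinearity and antisymmetry of the cup product pairing $\Ext^1\times\Ext^1\to\Ext^2$ (which holds here since $\FilFMIC(C)$ is a tensor exact category), reducing $\sLog(T)(1)\cup\sLog(T)$ to $\tfrac12\bigl(\sLog(T)\cup\sLog(T)\bigr)$ paired against itself, hence to a $2$-torsion class, hence $0$ because $\Ext^2$ is uniquely divisible as noted in the excerpt.

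The main obstacle I anticipate is the bookkeeping in the second step: verifying that the Frobenius structure of the rank-$4$ splice genuinely agrees with that induced from $\sPol_2(T)$ requires checking horizontality of $\Phi$ against $\nabla_\rig$ and matching the $\ln^{(p)}_1, \ln^{(p)}_2$ entries with the correct signs and $p$-power twists — in particular confirming that the $\Phi$ on the splice is $C_K^\dag$-linear and an algebra map in the sense demanded by the definition of $\FilFMIC$, and that no extraneous overconvergent ``constant'' obstructs the splitting. A secondary subtlety is the compatibility with $\Fil^\bullet$: one must check $\nabla(\Fil^i)\subset\Omega^1\otimes\Fil^{i-1}$ for the intermediate object and that the equivalence diagram respects filtrations strictly (this is exactly why the definition of exactness in $\FilFMIC$ is phrased fil-by-fil). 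Once the explicit object $\sPol_2(T)$ is recognized as the universal witness, though, both vanishings follow formally, and the antisymmetry argument disposes of the $\{T,T\}$ case with essentially no computation.
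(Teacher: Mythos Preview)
Your approach is essentially the paper's: for $\sLog(T)(1)\cup\sLog(1-T)$ the paper also uses $\sDilog(T)=\sPol_2(T)$ as the witness, equipping it with its weight filtration $W_\bullet$ so that $W_{-4}\cong C_K(2)$, $W_{-2}\cong\sLog(T)(1)$, $W_0/W_{-4}\cong\sLog(1-T)$, $W_0/W_{-2}\cong C_K$, and then writes down two explicit commutative diagrams of exact sequences that realize your ``factors through a rank-$3$ object, hence zero in $\Ext^2$'' step; for $\sLog(T)(1)\cup\sLog(T)$ the paper likewise invokes $\mathrm{Sym}^2\sLog(T)$, just as you propose. Your alternative antisymmetry/$2$-torsion argument for the $\{T,T\}$ case is a legitimate shortcut the paper does not take---it works because $\Ext^2$ is uniquely divisible, as the paper notes---but you should be careful to justify that the Yoneda cup product in this tensor exact category really is graded-commutative, which is true but not entirely automatic.
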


\begin{pf}
    $\sDilog(T)$ has a unique increasing filtration $W_{\bullet}$ (as an object of $\Fil$-$F$-$\MIC(C)$)
    that satisfies
    \[
        W_{j,\dR}=\begin{cases}
            0 & \text{if } j\leq -5,\\
            C_Ke_{-4} & \text{if } j=-4, -3,\\
            C_Ke_{-4}\oplus C_Ke_{-2} & \text{if } j=-2, -1,\\
            \sDilog(T)_{\dR} & \text{if } j\geq 0
        \end{cases}
    \]
and the filtration $\Fil^{\bullet}$ on $W_{j,\dR}$ is given to be
    $\Fil^iW_{j,\dR}=W_{j,\dR}\cap\Fil^i\sDilog(T)_{\dR}$.
    Then, it is straightforward to check that 
    $W_{-4}\cong C_K(2), W_{-2}\cong\sLog(T)(1), W_0/W_{-4}\cong\sLog(1-T)$ and 
    $W_0/W_{-2}\cong C_K$.
    In particular, $\sLog(T)(1)\cup\sLog(1-T)$ is realized by the extension
    $0\to W_{-4}\to W_{-2}\to W_0/W_{-4}\to W_0/W_{-2}\to 0$.
    The commutative diagram
   \[
        \xymatrix{
            0 \ar[r] & C_K(2) \ar@{=}[d]\ar[r]^{\iota} & C_K(2)\oplus W_{-2} \ar[r]^{\pi} 
            \ar[d]^{\mathrm{add}}& \ar[r] W_0 \ar[d]& C_K\ar@{=}[d] \ar[r] & 0\\
            0 \ar[r] & W_{-4} \ar[r] & W_{-2} \ar[r] & W_0/W_{-4} \ar[r] & C_K \ar[r] & 0,
        }
    \]
    where $\iota$ is the first inclusion,
    where $\pi$ is the composition with the second projection and the inclusion $W_{-2}\hookrightarrow W_0$ and where $\mathrm{add}\colon (x,y)\mapsto x+y$, shows that
    $\sLog(T)(1)\cup\sLog(1-T)$.
    The following commutative diagram, where $\pi_2$ is the second projection and where $\pi$ is the natural surjection, shows that this equals zero in the group $\Ext^2_{\Fil\hyphen F\hyphen\MIC(C)}(C_K, C_K(2))$;
    \[
   \xymatrix{
   0 \ar[r] & C_K(2) \ar[r]^{\iota} \ar@{=}[d] & C_K(2)\oplus W_{-2} 
   \ar[r]^{\pi} \ar[d]^{\pi_2}& \ar[r] \ar[d]^{\pi'} W_0 & C_K \ar@{=}[d] \ar[r] & 0\\
        0 \ar[r] & C_K(2) \ar[r]^{\mathrm{id}} & C_K(2) \ar[r]^{0} 
        & C_K \ar[r]^{\mathrm{id}} & C_K \ar[r] & 0
     }   \]
This completes the proof of the vanishing $\sLog(T)(1)\cup\sLog(1-T)=0$.
The proof of ``$\sLog(T)(1)\cup\sLog(-T)=0$'' goes in a smilar way by replacing
$\sDilog(T)$ with $\mathrm{Sym}^2\sLog(T)$.
\end{pf}
We define an object
\[
M_{f,g}(U/S)=(M_{f,g}(U/S)_\dR,M_{f,g}(U/S)_\rig,c,\Phi,\nabla,\Fil^\bullet)
\]
in $\FilFMIC(S)$
such that
\[
    M_{f,g}(U/S)_{\dR}\defeq H^1_{\dR}\left(U_K/S_K, \Cone[\sLog(f)_{\dR}\to\sLog(g)_{\dR}]\right),
\]
\[
    M_{f,g}(U/S)_{\rig}\defeq H^1_{\rig}\left(U_k/S_k, \Cone[\sLog(f)_{\rig}(1)\to\sLog(g)_{\rig}]\right),
\]
and $\nabla$ is the Gauss-Manin connection and
$\Fil^\bullet$ is the Hodge filtration on the de Rham cohomology with coefficients.
Here we 
note that the coefficient ``$\Cone[\cdots]$'' 
can be seen as a complex of mixed Hodge modules by M. Saito
\cite{msaito2}, 
so that one can define the connection and the Hodge filtration in the canonical way.
The comparison isomorphism
\[
    c\colon M_{f,g}(U/S)_{\dR}\otimes_{B_K}{B_K^{\dag}}\xrightarrow{\quad\cong\quad} M_{f,g}(U/S)_{\rig}
\]
is given in the following way.
Firstly, since $\sLog(f)$ and $\sLog(g)$ can be extended to logarithmic objects,
it suffices to construct the comparison isomorphism in the logarithmic situation.
Since the residue matrix of $\Cone[\sLog(f)(1)\to\sLog(g)]$ along every divisor is nilpotent
(because the connection matrix itself is nilpotent),
it satisfies the condition in \cite[1.1]{CaroTsuzuki} and hence we have an isomorphism
between the log.\ relative de Rham cohomology and the log.\ relative rigid cohomology.

\medskip

By the construction, $M_{f,g}(U/S)$ can be fit into the exact sequence
\[
    0\lra H^1(U/S)(2)\lra M_{f,g}(U/S)\lra H^0(U/S)=\cO_{S_K}
\os{\delta}{\lra} H^2(U/S)(2)
\]
in $\FilFMIC(S)$ where 
$\delta_{\dR}$ is the map $1\mapsto\dlog\{f,g\}=\frac{df}{f}\frac{dg}{g}
\in H_\dR^2(U_K/S_K)$.
Here we note that the strictness with respect to $\Fil^\bullet$ follows from
the fact that the above can be seen as an exact sequence of variations of
mixed Hodge structures, again by the theory of Hodge modules
\cite{msaito1}, \cite{msaito2}. 
\begin{prop}\label{prop:from_K2}
        The morphism
    \begin{equation}
        K_2^M(\sO(U))\cap\Ker(\dlog)\longrightarrow \Ext^1_{\Fil\hyphen F\hyphen\MIC(S)}\big(\sO_{S_K}, H^1(U/S)(2)\big);\quad \{f,g\}\mapsto M_{f,g}(U/S)
        \label{eq:fromK2}
    \end{equation}
    is well-defined, where
    $
    \dlog:K^M_2(\O(U))\to H^2_\dR(U/S)
    $
    is the dlog map.
\end{prop}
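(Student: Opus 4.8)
The plan is to verify two things: that the assignment $\{f,g\}\mapsto M_{f,g}(U/S)$ is additive in each argument (so that it descends from formal symbols to $K_2^M$), and that it kills the Steinberg relations $\{f,1-f\}$ and $\{f,-f\}$ when these symbols lie in $\Ker(\dlog)$. The target $\Ext^1_{\Fil\hyphen F\hyphen\MIC(S)}\big(\sO_{S_K},H^1(U/S)(2)\big)$ is a $\Q$-module (as noted after the definition of the Yoneda groups), so there is no torsion ambiguity to worry about.

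First I would record that $M_{f,g}(U/S)$ genuinely lands in the stated $\Ext^1$ group when $\dlog\{f,g\}=0$: the four-term exact sequence displayed just before the proposition shows $M_{f,g}(U/S)$ fits into $0\to H^1(U/S)(2)\to M_{f,g}(U/S)\to \cO_{S_K}\xrightarrow{\delta}H^2(U/S)(2)$, and the hypothesis $\delta_{\dR}(1)=\frac{df}{f}\frac{dg}{g}=0$ forces $M_{f,g}(U/S)\to\cO_{S_K}$ to be surjective, so it represents a class in $\Ext^1$. Strictness with respect to $\Fil^\bullet$ — which is what one needs for this to be an extension in the exact category $\Fil\hyphen F\hyphen\MIC(S)$ — is exactly what the Hodge-module remark before the proposition supplies.

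Next comes bi-additivity. The clean way is to work with the cone construction directly: $\sLog(ff')$ sits in an exact sequence of $\FilFMIC(U)$-objects built from $\sLog(f)$ and $\sLog(f')$ (the $\log$ of a product is the Baer sum of the corresponding extensions of $\cO_U$ by $\cO_U(1)$), and feeding this through the functor $\Cone[\sLog(-)(1)\to\sLog(g)]$ followed by $H^1_{\dR/\rig}(U/S,-)$ is additive, whence $M_{ff',g}(U/S)\equiv M_{f,g}(U/S)+M_{f',g}(U/S)$ in the $\Ext^1$ group; symmetrically in $g$. Here one should be a little careful that the intermediate symbols $\{f,g\}$, $\{f',g\}$ need not individually lie in $\Ker(\dlog)$, so strictly speaking the additivity statement should first be phrased at the level of the four-term sequences (i.e. in the ``$\Ext^1$ to $H^2$'' long exact sequence, or in a suitable $\Ext$ of the category before imposing the kernel condition) and then restricted; alternatively one can observe that $\dlog$ is itself bilinear so the kernel condition is compatible with the additions being used.

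Finally, the Steinberg relations. The key input is already proved: Proposition \ref{2-ext-welldef} together with its Lemma gives $\sLog(f)(1)\cup\sLog(f)=0$ and $\sLog(f)(1)\cup\sLog(1-f)=0$ in $\Ext^2(\sO_{U_K},\sO_{U_K}(2))$, via the explicit filtration $W_\bullet$ on $\sDilog(T)$ (resp. on $\mathrm{Sym}^2\sLog(T)$) and the two displayed commutative diagrams. I would argue that the vanishing of the $2$-extension $0\to\sO_{U_K}(2)\to\sLog(f)(1)\to\sLog(g)\to\sO_{U_K}\to 0$ over $U$ implies, after applying the relative-cohomology functor $H^1(U/S,-)$, that the pushforward $M_{f,g}(U/S)$ is the trivial class: concretely, a null-homotopy of that $2$-extension (the chain of commutative diagrams realizing it as $0$) pushes through the exact functor $H^1_{\dR}(U_K/S_K,-)$ on the de Rham side and $H^1_{\rig}(U_k/S_k,-)$ on the rigid side, compatibly under $c$, exhibiting a splitting of the extension $0\to H^1(U/S)(2)\to M_{f,g}(U/S)\to\cO_{S_K}\to 0$. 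One has to check that the splitting respects $\Fil^\bullet$, which again is the Hodge-module strictness.

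The main obstacle I expect is bookkeeping the functoriality and strictness, not any genuinely new idea: one must make sure that ``apply $H^1$ of the $U/S$-relative cone'' is an exact, $\Fil$-strict, $c$-compatible operation on the category of ($\FilFMIC$-valued) complexes involved, so that equalities of classes in $\Ext^2$ over $U$ — proven by the explicit diagram-chases in the Lemma — transport to equalities of classes in $\Ext^1$ over $S$. Once that formal package is in place, additivity and the two Steinberg vanishings follow immediately from Proposition \ref{2-ext-welldef}.
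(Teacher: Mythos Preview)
Your proposal is correct and follows essentially the same approach as the paper, which simply states that the proof is ``similar to that of Proposition \ref{2-ext-welldef} on noting that the strictness with respect to $\Fil^\bullet$ on cohomology with coefficients follows from the theory of Hodge modules (details are left to the reader).'' Your write-up is a reasonable expansion of exactly this: reduce the Steinberg relations to the already-proved vanishing in $\Ext^2$ over $U$ and push through the relative-cohomology construction, with the Hodge-module input supplying strictness.
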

\begin{pf}
The proof is similar to that of Proposition \ref{2-ext-welldef} on noting that
the strictness with respect to $\Fil^\bullet$ on cohomology with coefficients
follows from the theory of Hodge modules
(details are left to the reader).
    \end{pf}

\begin{lem}\label{lem:tame}
    The following diagram is commutative:
    \[
 \xymatrix{K^M_2(\sO(U))\cap\Ker(\dlog)
  \ar[r]^{\hspace{35pt}\mathrm{tame}\ \mathrm{symbol}} 
  \ar[d]_{\eqref{eq:fromK2}} 
  & \sO(D)^{\times}\ar[d] \\
            \Ext^1(\sO_{S_K}, H^1(U/S)(2)) \ar[r]^{\mathrm{Res}} & 
            \Ext^1(\O_{S_K}, H^0(D/S)(1)).
}    \]
Here, the tame symbol is defined by
    \[
        \{f, g\}\mapsto (-1)^{\ord_D(f)\ord_D(g)}\frac{f^{\ord_D}(g)}{g^{\ord_D}(f)}\bigg|_D
    \]
    and the vertical arrow on the right sends $h\in\sO(D)^{\times}$ to $\sLog(h)$.
    \label{lem:tame_symbol}
\end{lem}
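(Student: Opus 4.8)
The plan is to reduce the statement to a local (one-dimensional) computation on the divisor $D$ and then match the two extension classes directly from their constructions. First I would recall that the residue map $\mathrm{Res}$ on the right-hand vertical arrow is induced by the Gysin exact sequence of Proposition~\ref{prop:Gysin}: the inclusion $H^1(U/S)\to H^1(U/S)$, composed with the boundary $H^1(U/S)\to H^0(D/S)(-1)$, gives a map on $\Ext^1(\sO_{S_K},-)$ after the Tate twist by $(2)$, landing in $\Ext^1(\sO_{S_K},H^0(D/S)(1))$. So the content of the lemma is that applying this boundary map to the class of $M_{f,g}(U/S)$ produces exactly $\sLog\big(\{f,g\}_{\mathrm{tame},D}\big)$, where $\{f,g\}_{\mathrm{tame},D}$ is the tame symbol along $D$.

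The key steps, in order, are as follows. (1) Unwind the definition of $M_{f,g}(U/S)$ as $H^1(U/S,\,\Cone[\sLog(f)(1)\to\sLog(g)])$ and describe the induced residue along a component of $D$: since the coefficient object $\Cone[\sLog(f)(1)\to\sLog(g)]$ has unipotent local monodromy (the connection matrix is nilpotent, as noted in the construction of $c$), the residue is computed from the residues of $\sLog(f)$ and $\sLog(g)$ along $D$, which by the very definition of $\sLog$ (Definition~\ref{log-obj-def}, via pullback) record $\ord_D(f)$ and $\ord_D(g)$. (2) Observe that both $\mathrm{Res}$ and the tame symbol are compatible with the two defining relations used in Proposition~\ref{2-ext-welldef} and Proposition~\ref{prop:from_K2} — Steinberg relations $\{f,1-f\}$ and $\{f,f\}$ — and both are bilinear in the appropriate sense, so it suffices to check the identity on symbols of the form $\{f,g\}$ with $g$ a unit near $D$ and $f$ having a simple zero along $D$, i.e. on a generating set. (3) For such a local symbol, carry out the explicit computation: $\sLog(f)$ has nontrivial residue along $D$ equal to (a multiple of) the Tate object, $\sLog(g)$ restricts to $\sLog(g|_D)$, and the cone construction then yields that the boundary of $M_{f,g}$ is the pushout of $\sLog(g|_D)$ along multiplication by $\ord_D(f)$ — which, after accounting for the sign $(-1)^{\ord_D(f)\ord_D(g)}$ coming from the graded-commutativity of the cup product / Koszul sign in the cone, is precisely $\sLog$ of the tame symbol. (4) Conclude that the square commutes on generators, hence everywhere.

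The main obstacle I expect is step (1) together with step (3): making the identification of "the residue of the cone" with "the tame symbol object $\sLog$ of a unit on $D$" genuinely precise, rather than merely plausible. One must be careful that the residue in the rigid/de Rham setting here is the one coming from the logarithmic structures along $\overline{D}$ and $T=\overline{u}^{-1}(\overline{S}\setminus S)$ introduced before Proposition~\ref{prop:Gysin}, and that it is compatible with the comparison isomorphism $c$ on $M_{f,g}$ — this is where one leans on \cite[1.1]{CaroTsuzuki} and the nilpotence of the residue matrix, exactly as in the construction of $c$ for $M_{f,g}$. I would handle the sign bookkeeping by working with an explicit basis $e_0,e_{-2}$ of $\sLog$ as in Definition~\ref{log-obj-def} and tracking the Koszul sign in $\Cone[\sLog(f)(1)\to\sLog(g)]$; this is routine but is the place where errors hide, so I would present it carefully. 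As the statement itself signals (the proof is deferred after the statement in the excerpt), the remaining verifications — bilinearity, behaviour on Steinberg relations — are parallel to Proposition~\ref{2-ext-welldef} and can reasonably be left to the reader.
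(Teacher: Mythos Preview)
Your overall strategy---reduce to a generating set of symbols and compute the residue explicitly on generators---matches the paper. But two points diverge from the paper's proof and the second one is where your sketch has a gap.

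First, the paper makes the \emph{opposite} normalisation: it reduces to symbols $\{f,g\}$ with $\ord_D(f)=0$ (so $f$ is a unit in a neighbourhood of $D$), rather than your choice of $g$ a unit and $f$ with a simple zero. This is not merely cosmetic. With $f$ a unit near $D$, the object $\sLog(f)$ extends across $D$, so one has a Gysin sequence \emph{with coefficients in $\sLog(f)(1)$}. The paper then uses the natural projection
\[
p\colon \Cone\bigl[\sLog(f)(1)\to\sLog(g)\bigr]\longrightarrow \sLog(f)(1)
\]
which, after taking $H^1(U/S,-)$ and applying the residue, lands in $H^0(D/S,\sLog(f|_D))$, i.e.\ exactly the log object of $f|_D$. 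The remaining computation is to identify the induced map $\O_{S_K}\to H^1(U/S)(1)$ as $1\mapsto \frac{dg}{g}$, whose residue along $D$ is multiplication by $\ord_D(g)$; this produces the diagram matching $\Res\,[M_{f,g}]$ with $\ord_D(g)\cdot[\sLog(f|_D)]=[\sLog(f^{\ord_D(g)}|_D)]$, which is the tame symbol in this case. A short final step writes a general $\xi$ as $\{f,t\}+\sum\{u_j,v_j\}$ with $f,u_j,v_j$ units near $D$ and a fixed $t$ with $\ord_D(t)>0$, and reads off the result.

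Second, and this is the gap: your step~(3) says ``$\sLog(f)$ has nontrivial residue along $D$ \dots\ and the cone construction then yields that the boundary of $M_{f,g}$ is the pushout of $\sLog(g|_D)$''. But in your normalisation $f$ vanishes on $D$, so $\sLog(f)$ does \emph{not} extend to an object on $D$, and there is no clean ``residue of the coefficient'' to invoke. You would need an analogue of the paper's projection $p$ adapted to your swapped roles, and you have not identified one. The paper's choice of which variable is the unit is precisely what makes the residue-with-coefficients argument go through; without that, your step~(3) remains a plausibility statement rather than a computation. The sign issue you flag is real but secondary; the missing piece is the morphism playing the role of $p$.
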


\begin{pf}
We first note that $K_2^M(\O(U))$ is generated by
symbols $\{f,g\}$ with $\ord_D(f)=0$.
Suppose $\ord_D(f)=0$, namely $f\in \O(U\cup D)^\times$.
Let $p$ be the projection $\Cone\left[\sLog f(1)\to\sLog (g)\right]\to \sLog f(1)$.
Then $p$ and the residue map induce the following commutative diagram 
\begin{equation}
\xymatrix{           
 0 \ar[r] & H^1(U/S)(2) \ar[r] \ar@{=}[d] 
 & M_{f,g}(U/S) \ar[d]^{p} \ar[r] & \O_{S_K} \ar[d]^{p'} \ar[r]^{\delta_{f,g}\qquad} 
 & H^2(U/S)(2)\ar@{=}[d]\\
 \O_S \ar[r]^{\delta_f\qquad} & H^1(U/S)(2)\ar[d]^{\Res} 
            \ar[r] & H^1\left(\sLog f(1)\right) \ar[d]^{\Res} \ar[r] 
            & H^1(U/S)(1) \ar[d]^{\Res} \ar[r] & H^2(U/S)(2)\ar[d]^\Res\\
           0 \ar[r] & H^0(D/S)(1) \ar[r] 
           & H^0\left(\sLog f|D\right) \ar[r] & H^0(D/S) \ar[r]^{\delta_{f|D}} & H^1(D/S)(1)
}       \label{eq:comm_diagram}
\end{equation}
with exact rows where 
$\delta_{f,g}(1)=\frac{df}{f}\frac{dg}{g}$,
$\delta_f(1)=\frac{df}{f}$ and 
$\delta_{f|D}(1)=\frac{df}{f}|_D$.
    The maps ``Res''s are residue maps appearing in the Gysin exact sequence;
    as for the middle one, we are considering the Gysin exact sequence with coefficient
    in $\sLog(f)(1)$ which is constructed in the same manner as Proposition \ref{prop:Gysin}.
Moreover $p'$ coincides with the map $\delta_g:1\mapsto\frac{dg}{g}$. To see this,
consider a commutative diagram
    \[
 \xymatrix{          \left[\sLog(f)_{\dR}\to\sLog(g)_{\dR}\right] \ar[r] \ar[d]^p & \left[\sLog(f)_{\dR}/\sO_{U_K}e_{-2,f}\to\sLog(g)_{\dR}\right] 
 \ar[d]^{p'} 
 \ar[r]^{\hspace{2cm}\cong} & \left[ 0\to \sO_{U_K}\right] \\
            \left[\sLog(f)_{\dR}\to 0\right] \ar[r] 
            & \left[\sLog(f)_{\dR}/\sO_{U_K}e_{-2,f}\to 0\right] 
            \ar[r]^{\hspace{1cm}\cong} & \left[ \sO_{U_K}\to 0\right],
 }   \]
    where the two horizontal morphisms on the left-hand side are the canonical surjections and the vertical ones are the projections.
    Via the identification $\sLog(f)_{\dR}/\sO_{U_K}e_{-2,f}\cong\sO_{U_K}$ (which sends $e_{0,f}$ to $1$), the homomorphism $p'$
    is nothing but the connecting morphism arising from the extension $0\to\sO_{U_K}\to \sLog(g)_{\dR}\to \sO_{U_K}\to 0$.

Since the composition $\Res\circ p'$ is multiplication by $\ord_D(g)$, 
the diagram \eqref{eq:comm_diagram}
induces
\begin{equation}\label{eq:comm_diagram2}
\xymatrix{           
 0 \ar[r] & H^1(U/S)(2) \ar[r] \ar[d]^\Res 
 & M_{f,g}(U/S) \ar[d] \ar[r] & \O_{S_K} \ar[d]^{\ord_D(g)} \ar[r]^{\delta_{f,g}\qquad} 
 & H^2(U/S)(2)\ar[d]^\Res\\
           0 \ar[r] & H^0(D/S)(1) \ar[r] 
           & H^0\left(\sLog f|D\right)' \ar[r] & \O_{S_K} \ar[r] & H^1(D/S)(1).
}\end{equation}
Now we show the lemma.
Let $\xi\in K^M_2(\O(U))\cap\Ker(\dlog)$ be arbitrary.
Fix $t\in \O(U)^\times$ such that $\ord_D(t)>0$. Replacing $\xi$ with $m\xi$ for some $m>0$, one can express 
\[
\xi=\{f,t\}+\sum_j\{u_j,v_j\}
\]
with $\ord_D(f)=\ord_D(u_j)=\ord_D(v_j)=0$.
Then \eqref{eq:comm_diagram2} induces a commutative diagram
\begin{equation}\label{eq:comm_diagram3}
\xymatrix{           
 0 \ar[r] & H^1(U/S)(2) \ar[r] \ar[d]^\Res 
 & M_\xi(U/S) \ar[d] \ar[r] & \O_{S_K} \ar[d]^{\ord_D(t)} \ar[r]
 & 0\\
           0 \ar[r] & H^0(D/S)(1) \ar[r] 
           & H^0\left(\sLog f|D\right)' \ar[r] & \O_{S_K} \ar[r] & 0.
}\end{equation}
Since the tame symbol of $\xi$ is $f|_D$, the assertion follows.
\end{pf}
Let $C$ be the kernel of the connection morphism $H^0(D/S)(-1)\to H^2(X/S)$
in the Gysin exact sequence.
Since $H^0_{\rig}(D/S)$ is an overconvergent $F$-isocrystal on $S$ of weight $0$,
the weight of $C_{\rig}$ is $-2$ and thus
we have $\Hom_{\Fil\hyphen F\hyphen\MIC(S)}\big(\sO_{S_K}, C(2)\big)=0$.
This shows that the Gysin exact sequence induces an exact sequence
    \[
        0\longrightarrow \Ext^1\left(\sO_{S_K}, H^1(X/S)(2)\right)\xrightarrow{\alpha}\Ext^1\left(\sO_{S_K}, H^1(U/S)(2)\right)\xrightarrow{\beta}\Ext^1\big(\sO_{S_K}, C(2)\big),
    \]
    in which each $\Ext^1$ means $\Ext^1_{\Fil\hyphen F\hyphen\MIC(S)}$.  
Notice that $\dlog:K_2(X)\to H^2_\dR(X/S)$ is zero where $K_i=K_i^Q$ denotes
Quillen's $K$-theory.
Therefore, Proposition \ref{prop:from_K2} and Lemma \ref{lem:tame} immediately
imply the following.
\begin{prop}\label{prop:from_K2X}
The map \eqref{eq:fromK2} uniquely extends to
\[
K_2(X)\ot\Q\lra \Ext^1\left(\sO_{S_K}, H^1(X/S)(2)\right).
\]
\end{prop}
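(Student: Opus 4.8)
The plan is to deduce Proposition~\ref{prop:from_K2X} from the localization sequence in Quillen $K$-theory together with the already-established Proposition~\ref{prop:from_K2} and Lemma~\ref{lem:tame}. First I would recall that $X$ is a smooth variety over $V$ of relative dimension one over $S$, and one has $X = U \sqcup D$ with $D$ a relative smooth divisor finite \'etale over $S$; the Gersten/localization sequence then reads
\[
K_2(X)\ot\Q \lra K_2(U)\ot\Q \os{\partial}{\lra} \bigoplus_{x\in D} K_1(\kappa(x))\ot\Q = \sO(D)^\times\ot\Q,
\]
where $\partial$ is the tame symbol along $D$. Thus the image of $K_2(X)\ot\Q$ in $K_2(U)\ot\Q$ is exactly the kernel of the tame symbol. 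Since $\dlog$ vanishes on $K_2(X)$ (as noted in the excerpt, because $H^2_\dR(X/S)$ receives no nontrivial $\dlog$-classes from global units pulled back, or more precisely the composite $K_2(X)\to K_2(U)\xrightarrow{\dlog}H^2_\dR(U/S)$ factors through $H^2_\dR(X/S)$ on which the relevant symbols die), the image of $K_2(X)\ot\Q$ lands inside $K^M_2(\sO(U))\cap\Ker(\dlog)\cap\Ker(\mathrm{tame})$, where I also use Matsumoto's theorem to identify $K_2$ of the fields/function-fields in play with Milnor $K_2$ so that Proposition~\ref{prop:from_K2} applies to the symbols generating the image.

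Next I would combine the two diagrams. Proposition~\ref{prop:from_K2} gives a map from $K^M_2(\sO(U))\cap\Ker(\dlog)$ to $\Ext^1(\sO_{S_K},H^1(U/S)(2))$, and Lemma~\ref{lem:tame} says that composing with $\mathrm{Res}$ computes the $\sLog$ of the tame symbol. Therefore on the subgroup killed by the tame symbol, the composite with $\mathrm{Res}$ is zero, so the image of that subgroup lies in $\Ker\big(\mathrm{Res}:\Ext^1(\sO_{S_K},H^1(U/S)(2))\to\Ext^1(\sO_{S_K},H^0(D/S)(1))\big)$. Here I would invoke the weight argument already spelled out just before the proposition: because $H^0_{\rig}(D/S)$ has weight $0$, the object $C$ (the kernel of $H^0(D/S)(-1)\to H^2(X/S)$) has $C_{\rig}$ of weight $-2$, whence $\Hom_{\FilFMIC(S)}(\sO_{S_K},C(2))=0$, so the Gysin sequence yields the exact sequence
\[
0\to \Ext^1(\sO_{S_K},H^1(X/S)(2))\xrightarrow{\alpha}\Ext^1(\sO_{S_K},H^1(U/S)(2))\xrightarrow{\beta}\Ext^1(\sO_{S_K},C(2)).
\]
Since $\mathrm{Res}$ factors through $\beta$ (the map $H^1(U/S)\to H^0(D/S)(-1)$ factors as $H^1(U/S)\to C(-1)\hookrightarrow H^0(D/S)(-1)$, up to the relevant Tate twists), vanishing of $\mathrm{Res}$ on a class forces it to lie in the image of $\alpha$; and $\alpha$ is injective, so the lift to $\Ext^1(\sO_{S_K},H^1(X/S)(2))$ exists and is unique. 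This produces, for each tame-symbol-trivial symbol, a well-defined element of $\Ext^1(\sO_{S_K},H^1(X/S)(2))$, $\Q$-linearly, and since these symbols generate (the image of) $K_2(X)\ot\Q$, we get the desired map $K_2(X)\ot\Q\to\Ext^1(\sO_{S_K},H^1(X/S)(2))$; its uniqueness is forced by the requirement that it be compatible with \eqref{eq:fromK2} via $\alpha$.

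The main obstacle I anticipate is the bookkeeping needed to be sure the map is genuinely well-defined on all of $K_2(X)\ot\Q$ rather than just on an explicit generating set of symbols: one must check that different presentations $\xi=\sum_j\{f_j,g_j\}$ of the same $K_2$-class give the same extension. This follows because Proposition~\ref{prop:from_K2} already establishes that $\{f,g\}\mapsto M_{f,g}(U/S)$ is a well-defined homomorphism on $K^M_2(\sO(U))\cap\Ker(\dlog)$, and $\alpha^{-1}$ is a well-defined (partial) homomorphism on its image; so the composite is automatically independent of the presentation. A secondary technical point is justifying that $\partial$ in the localization sequence really is the tame symbol of Lemma~\ref{lem:tame} with the same sign conventions, and that after $\ot\Q$ the identification $K_2 = K^M_2$ (Matsumoto, or the Nesterenko--Suslin--Totaro theorem in the relevant generality) is legitimate for the rings $\sO(U)$, $\sO(X)$ that occur — but both are standard, and over $\Q$-coefficients the comparison is clean. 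Everything else is a diagram chase through the Gysin sequence already set up in the excerpt, so I would keep the written proof short, citing Proposition~\ref{prop:from_K2}, Lemma~\ref{lem:tame}, and the displayed exact sequence, and leaving the routine verifications to the reader.
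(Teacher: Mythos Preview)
Your proposal is correct and follows essentially the same route as the paper: reduce to Proposition~\ref{prop:from_K2} and Lemma~\ref{lem:tame}, use the localization sequence to see that classes from $K_2(X)$ have trivial tame symbol, and then lift uniquely through the injective map $\alpha$ in the $\Ext$-sequence coming from Gysin. The one step you should tighten is the implication ``$\mathrm{Res}(x)=0\Rightarrow\beta(x)=0$'': since $\mathrm{Res}$ is $\beta$ followed by the map induced by $C\hookrightarrow H^0(D/S)(-1)$, you need that induced map on $\Ext^1$ to be injective, which follows from the same weight argument (the cokernel embeds in $H^2(X/S)$, so after twisting by $(2)$ it has weight $-2$ and $\Hom(\sO_{S_K},-)$ vanishes).
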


\section{Syntomic regulators via hypergeometric functions}\label{main-sect}
For $a,b\in \Z_p$, we denote by
\[
{}_2F_1(a,b,1;\l):=\sum_{i=0}^\infty \frac{(a)_i(b)_i}{i!^2}\l^i\in \Z_p[[\l]],\quad
(x)_i:=x(x+1)\cdots(x+i-1)
\]
the {\it hypergeometric function}, regarded as a formal power series with
coefficients in $\Z_p$.

Let $A$ be a noetherian $\Z_p$-algebra complete with respect to the ideal $pA$, and
let $\sigma:A\to A$ be a $p$-th Frobenius (i.e. a lifting of the $p$-power map on $A/pA$).
We define
\begin{equation}\label{main-sect-eq2}
\log^{(\sigma)}(\alpha):=p^{-1}\log\left(\frac{\alpha^p}{\alpha^\sigma}\right)\in A
\end{equation}
for $\alpha\in A^\times$.
When $A=W(\ol\F_p)$ is the Witt ring and $\sigma$ is the $p$-th Frobenius, 
we also write it by $\log^{(p)}(\alpha)$.
\subsection{Hypergeometric fibration $y^N=x(1-x)^{N-1}(1-\l x)$ }\label{main-sect-1}
Let $N\geq 2$ be an integer.
Let $f:Y_\Q\to\P^1_\Q$ be a surjective morphism of projective smooth varieties
over $\Q$, such that a smooth general fiber $f^{-1}(\l)$ is a nonsingular
projective model of an affine curve \[y^N=x(1-x)^{N-1}(1-\l x).\]
Let $p\geq 3$ be an odd prime number prime to $N(N-1)$.
We shall give an integral model over $\Z_p$ in \S \ref{main-sect-3}, which we denote by
$f:Y\to \P^1_{\Z_p}$.
Put $S:=\Spec\Z_p[\l,(\l-\l^2)^{-1}]\hra \P^1_{\Z_p}$ and
$X:=f^{-1}(S)$. Then $f$ is smooth over $S$.
Moreover 
$f^{-1}(0)$, $f^{-1}(1)$ and $f^{-1}(\infty)$ are relative NCD's over $\Z_p$.

We denote by $\mu_m(F)\subset F^\times$
the group of all $m$-th roots of unity contained in a field $F$.
In what follows we assume $p\equiv 1$ mod $N$.
Then $\Z_p^\times$ contains a primitive $N$-th root of unity.
Define an action of $\zeta\in \mu_N:=\mu_N(\Q_p)$ on $Y$ by
\begin{equation}\label{main-sect-1-eq1}
[\zeta]\cdot(x,y,\l):=(x,\zeta^{-1} y,\l).
\end{equation}
By this action $H^\bullet_\rig(X_{\F_p}/S_{\F_p})$, $H^\bullet_\dR(X/S)$
are $\Z_p[\mu_N]$-modules.
For a $\Z_p[\mu_N]$-module $M$ and an integer $n$  
we denote by
$M^{(n)}$ the subspace on which each $\zeta\in \mu_N$
acts by multiplication by $\zeta^n$.
It follows from \cite{a-o-2} Prop. 3.1 that
there is a decomposition 
\begin{equation}\label{main-sect-1-eq2}
H^1_\dR(X_{\Q_p}/S_{\Q_p})=\bigoplus_{n=1}^{N-1}
H^1_\dR(X_{\Q_p}/S_{\Q_p})^{(n)}
\end{equation}
and each $H^1_\dR(X_{\Q_p}/S_{\Q_p})^{(n)}$ is free of rank $2$ over $\O(S_{\Q_p})$
with a free basis
\begin{equation}\label{main-sect-1-eq3}
\omega_n:=\frac{(1-x)^{n-1}dx}{y^n},\quad
\eta_n:=\frac{x(1-x)^{n-1}dx}{(1-\l x)y^n}\in H^1_\dR(X_{\Q_p}/S_{\Q_p})^{(n)}.
\end{equation}
For the later use, we introduce another basis
\begin{equation}\label{main-thm-wt}
\wt\omega_n:=F_n(\l)^{-1}\omega_n,\quad
\wt\eta_n:=(\l-\l^2)\left(F'_n(\l)\omega_n-\frac{n}{N}F_n(\l)\eta_n\right)
\end{equation}
of $(\Q_p\ot_{\Z_p}\Z_p((\l)))\ot_{\O(S)} H^1_\dR(X/S)^{(n)}$ where $\Z_p((\l)):=\Z_p[[\l]][\l^{-1}]$ and
\begin{equation}\label{main-sect-1-eq4}
F_n(\l):={}_2F_1\left(\frac{n}{N},1-\frac{n}{N},1;\l\right)\in \Z_p[[\l]],\quad (1\leq n\leq N-1).
\end{equation}

\subsection{Main Theorem}\label{main-sect-2}

For a smooth projective scheme $V$ over a complete discrete valuation ring
with a perfect reside field,
we denote by $H^\bullet_\syn(V,\Q_p(j))$ the {\it syntomic cohomology groups}
of Fontaine-Messing (cf. \cite{Ka2}).
We also use the {\it rigid syntomic cohomology groups} $H^\bullet_{\rig\text{-}\syn}(V,\Q_p(j))$
by Besser \cite{Be1}. They agree for projective smooth varieties.
However, for non-complete
varieties, the former does not give the ``right'' cohomology theory, while so does
the latter.

\medskip

Let $f:X\to S$ be as in \S \ref{main-sect-1}.
Let $W=W(\ol\F_p)$ denotes the Witt ring of the algebraic closure of $\F_p$, and
$K:=\Frac(W)$.
For $\alpha\in W\setminus\{0,1\}$,
we denote by
$X_\alpha=f^{-1}(\alpha)$ 
the fiber over $\Spec W[\l]/(\l-\alpha)\hra S\times_{\Z_p}\Spec W$.
Our main theorem gives an explicit description of the {\it syntomic regulator map}
\[
\reg_\syn:K_2(X_\alpha)\lra H^2_\syn(X_\alpha,\Q_p(2))\cong H^1_\dR(X_\alpha/K),
\]
on Quillen's $K_2$.

\medskip

To give the precise statement, we prepare notation.
Let $\wt\tau_n(\l)\in\Q_p[[\l]]$ be characterized by
\begin{equation}\label{main-sect-1-eq5}
\frac{d}{d\l}\wt\tau_n(\l)=\frac{1}{\l}\left(1-\frac{1}{(1-\l)F_n(\l)^2}\right),\quad 
\wt\tau_n(0)=0.
\end{equation}
Put
\begin{align}
\kappa^{(p)}_n:&=2\log^{(p)}N-\frac{1}{2}\sum_{\ve\in \mu_N(\Q_p)\setminus\{1\}}
(\ve^n+\ve^{-n})\log^{(p)}(1-\ve)(1-\ve^{-1})
\label{main-sect-1-eq6}\\
&=2\sum_{\ve\in \mu_N(\Q_p)\setminus\{1\}}
(1-\ve^{-n})\log^{(p)}(1-\ve),
\end{align}
Let $\sigma:W[[\l]]\to W[[\l]]$ be a $p$-th Frobenius
such that $\l^\sigma=a \l^p$ for some $a\in 1+pW$ and
$\sigma|_W$ agrees with the Frobenius on $W$.
Put 
\begin{equation}\label{main-sect-1-eq7}
\tau^{(\sigma)}_n(\l):=-\log^{(\sigma)}(\l)+
\kappa^{(p)}_n+\wt\tau_n(\l)-\frac{1}{p}\wt\tau_n(\l^\sigma)
\end{equation}
where 
$\log^{(\sigma)}(z)$ is the function defined in \eqref{main-sect-eq2}.
Let $E_i^{(n)}(\l)\in\Q_p[[\l]]$ be characterized by
\begin{equation}\label{main-diffeq-1-p}
\frac{d}{d\l}E_1^{(n)}(\l)=
\frac{F_n(\l)}{1-\l}-(-1)^{\frac{(p-1)n}{N}}p^{-1}
\frac{F_n(\l^\sigma)}{1-\l^\sigma}\frac{d\l^\sigma}{d\l}
\end{equation}
\begin{equation}\label{main-diffeq-2-p}
\frac{d}{d\l}E_2^{(n)}(\l)=
    \frac{E_1^{(n)}(\l)F_n(\l)^{-2}}{\l-\l^2}+
(-1)^{\frac{(p-1)n}{N}}p^{-1}\tau^{(\sigma)}_n(\l)
\frac{F_n(\l^\sigma)}{1-\l^\sigma}\frac{d\l^\sigma}{d\l}
\end{equation}
with initial conditions
\begin{equation}\label{main-diffeq-3-p}
E^{(n)}_1(0)=0,\quad E^{(n)}_2(0)=2N\sum_{\nu^N=-1}
\nu^{-n}\mathrm{ln}^{(p)}_2(\nu).
\end{equation}
Let 
$\ve^{(n)}_i(\l)$ be defined by
$\ve^{(n)}_1(\l)\omega_n+\ve^{(n)}_2(\l)\eta_n=
E^{(n)}_1(\l)\wt\omega_n+E^{(n)}_2(\l)\wt\eta_n$ by
the basis \eqref{main-sect-1-eq3} and
\eqref{main-thm-wt}.
In a down-to-earth manner, they are given as follows
\begin{align}
\ve^{(n)}_1(\l)&:=
F_n(\l)^{-1}E^{(n)}_1(\l)+(\l-\l^2)F'_n(\l)E^{(n)}_2(\l),\label{main-eq3}\\
\ve^{(n)}_2(\l)&:=
-\frac{n}{N}(\l-\l^2)F_n(\l)E^{(n)}_2(\l).\label{main-eq4}
\end{align}
The main theorem of this paper is the following.

\begin{thm}\label{main-thm}
Let $p\geq 3$ be a prime such that $p\equiv 1$ mod $N$.
Let $W=W(\ol\F_p)$ and $K=\Frac W$.
    Fix a $p$-th Frobenius $\sigma$ on $W[\l,(1-\l)^{-1}]^\dag$
    compatible with the Frobenius on $W$ and such that $\sigma(\l)=a\l^p$ for some 
    $a\in 1+pW$.
Let $\zeta_1,\zeta_2\in\mu_N=\mu_N(\Q_p)$ be arbitrary such that $\zeta_1\ne\zeta_2$. 
Let
\begin{equation}\label{main-thm-eqfg}
h_1:=\frac{y-\zeta_1(1-x)}{y-\zeta_2(1-x)},\quad h_2:=\frac{(1-\l)x^2}{(1-x)^2}
\end{equation}
and let $\{h_1,h_2\}\in K_2(X)$ be a $K_2$-symbol in Quillen's $K_2$.
    Then \[\ve^{(n)}_i(\l)\in K[\l,(1-\l)^{-1}]^\dag\] 
are overconvergent for each $i,n$, and the syntomic regulator is described as
\begin{equation}\label{main-thm-eq1}
\reg_\syn
(\{h_1,h_2\}|_{\l=\alpha})=\sum_{n=1}^{N-1}\frac{\zeta^n_1-\zeta^n_2}{N}
(\ve^{(n)}_1(\alpha)\omega_n+\ve^{(n)}_2(\alpha)\eta_n)\in H^1_\dR(X_\alpha/K)
\end{equation}
for all $\alpha\in W^\times$ satisfying $\alpha\not\equiv0, 1$ mod $p$
and $\sigma(\l)|_{\l=\alpha}=\alpha^\sigma$ (namely $a\alpha^p=\alpha^\sigma$).
\end{thm}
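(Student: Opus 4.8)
The plan is to realize $\reg_\syn(\{h_1,h_2\}|_{\l=\alpha})$ as the extension class of the filtered $F$-isocrystal $M_{h_1,h_2}(X/S)$ (via Proposition~\ref{prop:from_K2X}), restricted to the fiber $\l=\alpha$, and then to compute that extension class explicitly. First I would verify that $\{h_1,h_2\}$ actually lies in $K_2(X)$ and in $\Ker(\dlog)$ on the generic fiber, so that the machinery of \S\ref{Milnor-Ext-sect} applies; the choice \eqref{main-thm-eqfg} of $h_1,h_2$ is exactly the one from \cite{a-k2}, and the vanishing of $\dlog$ on $K_2(X)$ was already noted before Proposition~\ref{prop:from_K2X}. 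By the compatibility of the syntomic regulator with the $\mu_N$-action, it suffices to work $\mu_N$-eigenspace by eigenspace; the coefficient $\tfrac{\zeta_1^n-\zeta_2^n}{N}$ will come out of projecting $\dlog(h_1)$ onto the $n$-th eigencomponent of $H^1(\sLog(h_1))$, using the partial-fraction identity $\tfrac{1}{y-\zeta(1-x)} = \tfrac{1}{N}\sum_n \zeta^{-n}\tfrac{(1-x)^{n-1}}{y^n}\cdot(\text{something})$ together with the description \eqref{main-sect-1-eq3} of the eigenbasis.

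The heart of the argument is the two-step computation sketched in the introduction. \emph{Step 1 (differential equations).} On the eigenspace $H^1(X/S)^{(n)}$, write a candidate section $s$ of $M_{h_1,h_2}(X/S)^{(n)}$ lifting $1\in\cO_{S_K}$; in the de Rham realization $\Fil^0$ forces $s_\dR$ to be the unique filtered lift, and the extension class is measured by $\nabla_\rig$-horizontality versus the Frobenius $\Phi$. Imposing $\Phi\nabla_\rig = \nabla_\rig\Phi$ on $s$ yields first-order inhomogeneous ODEs for the coordinates of the regulator against the basis $\wt\omega_n,\wt\eta_n$; these are precisely \eqref{main-diffeq-1-p}--\eqref{main-diffeq-2-p}. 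Here I would invoke the complete computation of the Frobenius matrix on $H^1(X/S)^{(n)}$ — not just its eigenvalues but the off-diagonal entry, which is where the term $\tau_n^{(\sigma)}(\l)$ enters — carried out in \S\ref{frob-sect} (this is the input I am allowed to assume). The functions $F_n(\l)$ and $\wt\tau_n(\l)$ enter through the horizontal basis of the connection: $\wt\omega_n = F_n(\l)^{-1}\omega_n$ is (a multiple of) the flat section, as one checks from the hypergeometric differential equation satisfied by $F_n$. The overconvergence claim $\ve_i^{(n)}(\l)\in K[\l,(1-\l)^{-1}]^\dag$ then follows from the shape of the ODEs together with Proposition~\ref{polylog-oc}, since the inhomogeneous terms are overconvergent and the Frobenius structure forces the solutions to converge on the right disk.

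\emph{Step 2 (constant terms).} The ODEs pin down $E_1^{(n)},E_2^{(n)}$ only up to their values at $\l=0$, so I must compute the regulator on the degenerate fiber $f^{-1}(0)$. The curve $y^N=x(1-x)^{N-1}$ is a quotient of a Fermat-type curve, and on it the restricted symbol $\{h_1,h_2\}|_{\l=0}$ lives in $K_2$ of a curve over $W$ whose regulator reduces, via the localization/boundary sequence and Lemma~\ref{lem:tame} (the tame-symbol compatibility), to a sum of contributions from $H^1_{\mathscr M}(\Spec W,\Q_p(2))$ at the points where $h_1$ or $h_2$ acquires a zero or pole. Each such contribution is a $p$-adic dilogarithm value, which is how $\ln_2^{(p)}(\nu)$ for $\nu^N=-1$ appears in \eqref{main-diffeq-3-p}; the leftover constant $\kappa_n^{(p)}$ in \eqref{main-sect-1-eq7} collects the $\log^{(p)}$-terms from the tame symbols of $h_1=\tfrac{y-\zeta_1(1-x)}{y-\zeta_2(1-x)}$ along the components of $f^{-1}(0)$. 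Concretely I would use Besser's theorem that the syntomic regulator on $K_2$ of a curve is a Coleman integral, evaluate that integral on $f^{-1}(0)$ explicitly, and match it against $E_2^{(n)}(0)$.

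\textbf{Main obstacle.} The serious difficulty is Step~1's reliance on the \emph{exact} Frobenius matrix on $H^1(X/S)^{(n)}$, including the non-diagonal entry: the eigenvalues are classical (Gauss sums / Jacobi sums), but the precise overconvergent off-diagonal entry — which produces $\tau_n^{(\sigma)}(\l)$ and hence the $\kappa_n^{(p)}$ constant — requires a delicate comparison of the rigid and de Rham bases and careful bookkeeping of the $p$-adic constants; this is exactly the content deferred to \S\ref{frob-sect}. A secondary subtlety is checking the strictness of the extension sequence for $M_{h_1,h_2}$ with respect to $\Fil^\bullet$ after restriction to $\l=\alpha$, so that the de Rham realization of the extension class is genuinely the stated element of $H^1_\dR(X_\alpha/K)$ and not merely a class modulo $\Fil^0$; this is where the Hodge-module input cited after Proposition~\ref{prop:from_K2} is used.
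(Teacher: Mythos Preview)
Your two-step outline matches the paper's, and Step~1 is essentially right. Two points need repair.

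\emph{Overconvergence.} You cannot extract overconvergence from the ODEs: the right-hand sides of \eqref{main-diffeq-1-p}--\eqref{main-diffeq-2-p} contain $F_n(\l)$ and $\wt\tau_n(\l)$, which are power series of radius~$1$ and are \emph{not} in $K[\l,(1-\l)^{-1}]^\dag$; Proposition~\ref{polylog-oc} concerns $\ln_r^{(p)}$ and is irrelevant here. The paper argues in the opposite direction: the components $\wt\ve_i^{(n)}$ of $e_\rig-\Phi(e_\rig)\in H^1_\rig(X_0/S_0)$ in the basis $\omega_n,\eta_n$ lie in $B_K^\dag$ \emph{by construction}; one rewrites them in the basis $\wt\omega_n,\wt\eta_n$ as $\wt E_i^{(n)}$ and then identifies $\wt E_i^{(n)}=\tfrac{\zeta_1^n-\zeta_2^n}{N}E_i^{(n)}$ by matching the ODE and the initial data. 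Overconvergence of $\ve_i^{(n)}$ is a consequence, not an input.

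\emph{The degenerate fiber.} Your Step~2 has a genuine gap. The fiber $Z=f^{-1}(0)$ is \emph{singular}---two rational curves meeting transversally at the $N$ points $P_\nu$ of \eqref{sing-point}---so Besser's Coleman-integral theorem for smooth curves does not apply, and Lemma~\ref{lem:tame} concerns residues along a divisor $D\subset X$ over $S$, not the limit $\l\to 0$. The paper instead passes to Kato's syntomic cohomology of log schemes (\S\ref{mainpf2-sect}): after checking that $\{h_1,h_2\}$ extends to $K_2(\cY)$ over $W[[\l]]$ (Lemma~\ref{mainpf2-1-lem2}), one builds a comparison diagram between $H^2_\syn(\cY(Z),\Z_p(2))$ and $H^2_\syn(Z_\bullet,\Z_p(2))$ for the simplicial nerve $Z_\bullet$ of the normalization of $Z$, and computes the regulator class in $H^2_\syn(Z_\bullet,\Q_p(2))\cong H^1(\O_{Z_\bullet})$ directly as a \v{C}ech-type cocycle supported on $\{P_\nu\}$. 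A residue argument at the $P_\nu$ gives $E_1^{(n)}(0)=0$; the explicit cocycle on $Z_2\cong\bP^1$ produces $E_2^{(n)}(0)$ in terms of $\ln_2^{(p)}(\nu)$ with $\nu^N=-1$. Finally, your attribution of $\kappa_n^{(p)}$ to tame symbols at $\l=0$ is mistaken: $\kappa_n^{(p)}$ sits inside $\tau_n^{(\sigma)}$, the off-diagonal entry of the Frobenius on $H^1(X/S)^{(n)}$ (Theorem~\ref{frob-thm}), computed from the multiplicative periods of the Jacobian in \S\ref{gm-sect}; it belongs to Step~1, not Step~2.
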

\begin{rem}
By the proof below (especially 
\S \ref{mainpf2-1-sect} \eqref{mainpf2-1-eq3}), one further sees that
the coefficients of $E_i^{(n)}(\l)$ belong to $W$ and hence
$\ve^{(n)}_i(\l)\in W[\l,(1-\l)^{-1}]^\dag$.
\end{rem}

\subsection{Integral model $Y$ over $\Z_p$}\label{main-sect-3}
Let $Y_\Q\to \P^1_\Q$ be the hypergeometric fibration over $\Q$ in \S \ref{main-sect-1}.
In this section, for $p\geq 3$ a prime number prime to $N(N-1)$
we construct an integral model $f:Y\to \P^1_{\Z_p}$ over $\Z_p$.
Moreover we shall show that $f$ is smooth over
$S=\Spec\Z_p[\l,(\l-\l^2)^{-1}]$, and the divisors
$f^{-1}(0)$, $f^{-1}(1)$ and $f^{-1}(\infty)$ are relative NCD's over $\Z_p$.

\medskip

Put
\[
U':=\Spec \Z_p[x,y,\l]/(y^N-x(1-x)^{N-1}(1-\l x))
\]
\[
U^{\prime\prime}:=\Spec \Z_p[z,w,\l]/(w^N-z^{N-1}(z-1)^{N-1}(z-\l))
\]
and glue them by $z=x^{-1}$ and $w=yx^{-2}$.
Let $U_1\to U'\cup U^{\prime\prime}$ 
be the blowing-up along the closed set $(x,y)=(1,0)$ $\Leftrightarrow$
$(z,w)=(1,0)$, and
$U'_1$ and $U^{\prime\prime}_1$
the inverse images of $U'$ and $U^{\prime\prime}$.
Then $U'_1$ is smooth over $\Z_p$ while $U^{\prime\prime}_1$ has 
a singular locus $(z,w)=(0,0)$.
Let $U_2\to U_1$ be the blowing up along the locus $(z,w)=(0,0)$.
Then $U_2$ is smooth over $\Z_p$ and we have a projective flat morphism
\[
f_0:U_2\lra \Spec \Z_p[\l].
\]
The fibers $f_0^{-1}(0)$ and
$f_0^{-1}(1)$ are relative NCD's over $\Z_p$, and all components are rational curves,
i.e. $f_0$ has {\it totally degenerate semistable reductions} at $\l=0,1$.

For the later use we write down the singular fiber 
\begin{equation}\label{int-locus1}
Z:=f_0^{-1}(0)\subset U_2.
\end{equation}
The inverse image of $U^{\prime\prime}\cap\{z\ne1\}$ is covered by two affine open sets
$W_1$ and $W_2$
\begin{equation}\label{int-locus2}
W_1=\Spec \Z_p[z_1,w,\l,(z_1w-1)^{-1}]/(w-z_1^{N-1}(z_1w-1)^{N-1}(z_1w-\l))\subset U_2,
\end{equation}
\begin{equation}\label{int-locus3}
W_2=\Spec \Z_p[z,w_1,\l,(z-1)^{-1}]/(zw_1^N-(z-1)^{N-1}(z-\l))\subset U_2
\end{equation}
where $z_1=w_1^{-1}=zw^{-1}$.
Hence $Z$ has two irreducible components $Z_1$ and $Z_2$
such that $Z_1\cap W_2=\{z=\l=0\}$ and  $Z_2\cap W_2=\{w_1^N-(z-1)^{N-1}=\l=0\}$.
They are isomorphic to $\P^1_{\Z_p}$ and 
intersect transversally. The intersection locus $Z_1\cap Z_2\subset W_2$ is given by 
\begin{equation}\label{int-locus4}
z=(-w_1)^N+1=\l=0.
\end{equation}

Next we construct a compactification of $U_2\to\Spec A$ over $\l=\infty$.
Let $B=\Z_p[t]$ and
\[
V':=\Spec B[x,y^*,t]/(y^{*N}-t^{N-1}x(1-x)^{N-1}(t-x))
\]
\[
V^{\prime\prime}:=\Spec B[z,w^*,t]/(w^{*N}-t^{N-1}z^{N-1}(z-1)^{N-1}(tz-1))
\]
be glued with $U'\cup U^{\prime\prime}$ by $t=\l^{-1}$, $y^*=ty$ and $w^*=tw$.
For simplicity of notation we write $y=y^*$ and $w=w^*$.
Let $V_1\to V'\cup V^{\prime\prime}$ be the blowing-up along $(x,y)=(1,0)$, and 
$V'_1$ and $V^{\prime\prime}_1$ the inverse images of $V'$ and $V^{\prime\prime}$.

We first resolve the singularities of $V'_1$.
The singular locus of $V'_1$ is contained in an affine open set
\[
P':=\Spec B[x,y_1,t]/((1-x)y_1^N-t^{N-1}x(t-x)),\quad y_1(1-x)=y
\]
and it is given as $(y_1,t)=(0,0)$.
Let $V'_2\to V'_1$ be the blowing-up along $(y_1,t)=(0,0)$.
Then the inverse image of $P'$ is covered by two affine open sets
\[
P_1=\Spec B[x,y_2,t]/((1-x)ty_2^N-x(t-x)),\quad ty_2=y_1,
\]
\[
P_2=\Spec B[x,y_1,t_1]/((1-x)y_1-t_1^{N-1}x(t-x)),\quad t_1y_1=t.
\]
Then 
$P_1$ has a singular point $(x,y_2,t)=(0,0,0)$ and
$P_2$ has a singular point $(x,y_1,t_1)=(1,0,0)$.
The latter is a rational singularity of type $A_{N-1}$, and there is a resolution
such that the reduced part of the fiber over $t=0$ is a relative NCD over $\Z_p$. 
The former is a singular point of type $ty^N=x(t-x)$.
If we take the blowing up along $(x,t)=(0,0)$, then
three rational singular points of type $A_N$ appear, and hence we can again
resolve them. 
We thus have a resolution $V'_2\to V'_1$.

Next we resolve the singularities of $V_1^{\prime\prime}$.
The singular loci of $V_1^{\prime\prime}$ are contained in an affine open set
\[
Q^{\prime\prime}:=\Spec B[z,w_1,t]/((z-1)w_1^N-t^{N-1}z^{N-1}(tz-1)),\quad w_1(z-1)=w,
\]
and they are given as $(w_1,t)=(0,0)$ or $(z,w_1)=(0,0)$.
Let $V_2^{\prime\prime}\to V_1^{\prime\prime}$ be the blowing-up along $(z,w_1)=(0,0)$.
The inverse image of $Q^{\prime\prime}$ is covered by
\[
Q_1=\Spec \Z_p[z,w_2,t]/(z(z-1)w_2^N-t^{N-1}(tz-1)),\quad zw_2=w_1,
\]
\[
Q_2=\Spec \Z_p[z_1,w_1,t]/((z_1w_1-1)w_1-t^{N-1}z_1^{N-1}(tz_1w_1-1)),\quad z_1w_1=z.
\]
$Q_2$ is smooth over $\Z_p$ while $Q_1$ has a singular locus $(w,t)=(0,0)$.
Again take a blowing up $Q^*_1\to Q_1$ along the locus $(w_2,t)=(0,0)$.
It is covered by
\[
Q_3=\Spec \Z_p[z,w_2,t_1]/(z(z-1)w_2-t_1^{N-1}(t_1w_2z-1)),\quad t_1w_2=t,
\]
\[
Q_4=\Spec \Z_p[z,w_3,t]/(z(z-1)tw_3^N-(tz-1)),\quad tw_3=w_2.
\]
$Q_4$ is smooth over $\Z_p$ while $Q_3$ has two singular points $(z,w_2,t_1)=(0,0,0),
(1,0,0)$.
Both are rational singularities of type $A_N$.
There is a resolution
such that the reduced part of the fiber over $t=0$ is a relative NCD over $\Z_p$. 
We thus have a resolution $V^{\prime\prime}_2\to V^{\prime\prime}_1$.

Summing up the above we have
a projective flat morphism
\[
f_\infty:V_2=V'_2\cup V^{\prime\prime}_2\lra \Spec \Z_p[t]
\]
which is smooth over $\Spec\Z_p[t,t^{-1}]$ and
$V_2$ is smooth over $\Z_p$.
One can check that the redcued part of the fiber over $t=0$ ($\Leftrightarrow$ $\l=\infty$)
is a relative NCD over $\Z_p$.
We thus have the desired integral model
\[
f:Y:=U_2\cup V_2\lra \P^1_{\Z_p}.
\]

\section{Periods of Jacobian variety $J(X/S)$}\label{gm-sect}
\subsection{De Rham symplectic basis for totally degenerating abelian varieties}
\label{gm-mt-sect}
Let $R$ be a regular noetherian domain, and $I$
a reduced ideal of $R$. Let $L:=\Frac(R)$ be the fractional field.
Let $J/R$ be a $g$-dimensional 
commutative group scheme such that
the generic fiber $J_\eta$ is a principally polarized abelian variety over $L$.
If the fiber $T$ over $\Spec R/I$ is an algebraic torus,
we call $J$ a {\it totally degenerating abelian schemes over $(R,I)$}
(cf. \cite{FC} Chapter II, 4).
Assume that the algebraic torus $T$ is split.
Assume further that $R$ is complete with respect to $I$.
Then there is the uniformization $\rho:\bG_m^g\to J$ in the rigid analytic sense.
We fix $\rho$ and the coordinates $(u_1,\ldots,u_g)$ of $\bG_m^g$.
Then a matrix
\begin{equation}\label{gm-sect-3-eq1}
\underline{q}=\begin{pmatrix}
q_{11}&\cdots&q_{1g}\\
\vdots&&\vdots\\
q_{g1}&\cdots&q_{gg}
\end{pmatrix},\quad q_{ij}=q_{ji}\in L
\end{equation}
of multiplicative periods is determined up to $\mathrm{GL}_g(\Z)$, and this 
yields an isomorphism
\[
J\cong \bG_m^g/\underline{q}^\Z
\]
of abelian schemes over $R$ where $\bG_m^g/\underline{q}^\Z$ denotes 
Mumford's construction of the quotient scheme (\cite{FC} Chapter III, 4.4).

\medskip

In what follows, we suppose that the characteristic of $L$ is zero.
The morphism $\rho$ induces 
\[
\rho^*:\Omega^1_{J/R}\lra \bigoplus_{i=1}^g\wh\Omega^1_{\bG_m,i},\quad
\wh\Omega^1_{\bG_m,i}:=\varprojlim_n\Omega^1_{R/I^n[u_i,u_i^{-1}]/R}.
\]
Let 
\[
\Res_i:\wh\Omega^1_{\bG_m,i}\lra R,\quad 
\Res_i\left(\sum_{m\in\Z} a_mu^m_i\frac{du_i}{u_i}\right)=a_0
\]
be the residue map. The composition of $\rho^*$ 
and the residue map induces 
a morphism
$\Omega^\bullet_{J/R}\lra R^g[-1]$
of complexes, and hence a map \[
\tau:H^1_\dR(J/R):=H^1_\zar(J,\Omega^\bullet_{J/R})\lra R^g.
\]
Let $U$ be defined by
\[
0\lra U\lra H^1_\dR(J_\eta/L)\os{\tau}{\lra} L^g\lra0.
\]
Note that the composition
$\vg(J_\eta,\Omega^1_{J_\eta/L})\os{\subset}{\to} H^1_\dR(J_\eta/L)\os{\tau}{\to} L^g
$
is bijective.
Let $\langle x,y\rangle$ denotes the symplectic pairing on $H^1_\dR(J_\eta/L)$
with respect to the principal polarization on $J_\eta$.
We call an $L$-basis
\[
\wh\omega_i,\, \wh\eta_j\in H^1_\dR(J_\eta/L),\quad 1\leq i,\,j\leq g
\]
a {\it de Rham symplectic basis} if the following conditions are satisfied. 
\begin{description}
\item[(DS1)]
$\wh\omega_i\in \vg(J_\eta,\Omega^1_{J_\eta/L})$ and
$\tau(\wh\omega_i)\in (0,\ldots,1,\ldots,0)$ where ``$1$'' is placed in the $i$-th component.
Equivalently, \[
\rho^*(\wh\omega_i)=\frac{du_i}{u_i}.
\]
\item[(DS2)]
$\wh\eta_j\in U$ and
$\langle\wh\omega_i ,\wh\eta_j\rangle=\delta_{ij}$ where
$\delta_{ij}$ is the Kronecker delta.
\end{description}
If we fix the coordinates $(u_1,\ldots,u_g)$ of $\bG_m^g$, then 
$\wh\omega_i$ are uniquely determined by {\bf(DS1)}.
Since the symplectic pairing $\langle x,y\rangle$ is annihilated on $U\ot U$ and
$\vg(J_\eta,\Omega^1_{J_\eta/L})\ot \vg(J_\eta,\Omega^1_{J_\eta/L})$,
the basis $\wh\eta_j$ are uniquely determined as well by {\bf(DS2)}.
\begin{prop}\label{gm-mt-sect-prop1}
Let $V$ be a subring of $R$.
Suppose that $(R,I)$ and $V$ satisfy the following.
\begin{description}
\item[(C)]
There is a regular integral noetherian 
$\C$-algebra $\wt R$ complete with respect to a reduced ideal $\wt I$ and an
injective homomorphism $i:R\to\wt R$ such that $i(V)\subset \C$ and
$i(I)\subset \wt I$ and 
\[
\wh\Omega^1_{L/V}\lra \wh\Omega^1_{\wt L/\C}
\]
is injective where we put $\wt L:=\Frac(\wt R)$ and 
\[
\wh\Omega^1_{L/V}:=L\ot_R\left(\varprojlim_n \Omega^1_{R_n/V}\right),\quad \wh\Omega^1_{\wt L/\C}:=\wt L\ot_{\wt R}\left(\varprojlim_n \Omega^1_{\wt R_n/\C}\right),\quad R_n:=R/I^n,\, \wt R_n:=\wt R/\wt I^n.
\]
\end{description}
Let
\[
\nabla:H^1_\dR(J_\eta/L)\lra \wh\Omega^1_{L/V}\ot_L
H^1_\dR(J_\eta/L)
\]
be the Gauss-Manin connection. Then
\begin{equation}\label{gm-mt-sect-eq3}
\nabla(\wh\omega_i)=\sum_{j=1}^g\frac{dq_{ij}}{q_{ij}}\ot \wh\eta_j,
\quad \nabla(\wh\eta_i)=0.
\end{equation}
\end{prop}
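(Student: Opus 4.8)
The plan is to compute the Gauss–Manin connection in the rigid-analytic uniformization and then descend to the algebraic de Rham cohomology using the injectivity hypothesis \textbf{(C)}. First I would work over $\widetilde R$, where the torus $\bG_m^g$ uniformizes $J$ analytically; the key point is that over a ground ring of characteristic zero the analytic picture is literally the classical one. Concretely, for $J\cong\bG_m^g/\uq^{\Z}$ the invariant differentials $du_i/u_i$ are horizontal for the ``obvious'' connection on the trivial bundle over $\bG_m^g$, so the nontriviality of $\nabla$ comes entirely from the lattice $\uq^{\Z}$ varying with the base. Differentiating the relation that cuts out the quotient — in Mumford's construction the theta-trivialization, or more elementarily the identity $u_i\sim q_{ij}u_i$ under the $j$-th lattice generator — produces the logarithmic term $dq_{ij}/q_{ij}$. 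This is the computation that shows $\nabla(\wh\omega_i)=\sum_j (dq_{ij}/q_{ij})\ot\wh\eta_j$ modulo the precise identification of the ``error term'' with the symplectic-dual basis $\wh\eta_j$.

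The second step is to pin down that the error term is exactly $\wh\eta_j$ as characterized by \textbf{(DS2)}, and to deduce $\nabla(\wh\eta_i)=0$. For the first, I would use the compatibility of $\nabla$ with the symplectic pairing: since $\langle\cdot,\cdot\rangle$ is horizontal, $d\langle\wh\omega_i,\wh\omega_j\rangle=0$ forces the connection coefficients to be symmetric, which (together with $q_{ij}=q_{ji}$) is consistent, and pairing $\nabla(\wh\omega_i)$ against $\wh\omega_j$ and against a complement shows the $\wh\eta_j$-coefficient is forced. For the second, $\nabla(\wh\eta_i)=0$: one uses that $U$ (the span of the $\wh\eta_j$) is the orthogonal complement of the $\wh\omega$'s, that $U$ is stable under $\nabla$ (Griffiths transversality is automatic here since $U$ is a single graded piece, or rather because $H^1/U\cong\vg(\Omega^1)$ and $\nabla$ shifts the Hodge filtration by one so lands in $\Omega^1\ot H^1$ with the $U$-part unconstrained — one must argue instead from the weight/monodromy filtration, where $U=W_0$ is the invariants under the toric monodromy and hence horizontal), and then that on the associated graded the connection on $W_0$ is the Gauss–Manin connection of the torus part, which is trivial.

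The final step is descent via \textbf{(C)}. Both $\nabla(\wh\omega_i)-\sum_j(dq_{ij}/q_{ij})\ot\wh\eta_j$ and $\nabla(\wh\eta_i)$ are elements of $\wh\Omega^1_{L/V}\ot_L H^1_{\dR}(J_\eta/L)$; I would show they vanish after applying the injection $\wh\Omega^1_{L/V}\hra\wh\Omega^1_{\wt L/\C}$ (tensored with the identity on $H^1_{\dR}$, which is faithfully flat base change on the cohomology side since $L\to\wt L$ is injective between fields and de Rham cohomology commutes with field extensions). Over $\C$ the vanishing is the classical computation of the Gauss–Manin connection for a degenerating family of principally polarized abelian varieties in terms of the period matrix $\uq$, e.g. via the nilpotent orbit / Picard–Fuchs description, or by direct differentiation of the relation $\wh\eta_j=\sum$ (period integrals) against a topological basis. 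Injectivity then gives the identities over $L$.

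\medskip\noindent\emph{Main obstacle.} The delicate point is the second step: rigorously identifying the error term in $\nabla(\wh\omega_i)$ with the symplectically-dual basis $\wh\eta_j$ and proving $\nabla(\wh\eta_i)=0$ without hand-waving about which filtration is horizontal. The clean route is to invoke the monodromy-weight filtration on $H^1$ of the degenerating family: $W_0=U$ is the monodromy invariants, hence a horizontal sub-isocrystal, and the induced connection on $\mathrm{gr}^W_0\cong H^1_{\dR}(T)$ of the torus is trivial because the torus is split and constant in the relevant directions; dually, the quotient $H^1/W_0$ carries the connection of the abelian-variety part's ``constant'' quotient. Making this precise — and checking it is compatible with the purely algebraic statement being claimed, as opposed to an analytic one — is where the real work lies; everything else is either classical over $\C$ or formal bookkeeping with $\rho^*$ and $\Res_i$.
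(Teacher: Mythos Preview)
Your overall architecture --- use condition \textbf{(C)} to pass to a $\C$-algebra, do the computation there, and descend by the injectivity of $\wh\Omega^1_{L/V}\to\wh\Omega^1_{\wt L/\C}$ --- matches the paper exactly. What differs, and what dissolves your ``main obstacle'' entirely, is the computation over $\C$ itself.

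The paper does not argue algebraically with the symplectic pairing or the monodromy--weight filtration. Instead, after spreading out to a family $J_S\to S$ over a smooth affine $\C$-variety and completing at a closed point (so one may take $R=\C[[x_1,\dots,x_n]]$), it works with the associated analytic fibration and writes everything in terms of the Betti basis. Concretely, on a fiber $J_\l=(\C^\times)^g/\uq^{\Z}$ one has homology cycles $\delta_i$ (the circle $|u_i|=\ve$) and $\gamma_j$ (the path from $(1,\dots,1)$ to $(q_{j1},\dots,q_{jg})$); their duals $\delta_i^*,\gamma_j^*$ form a symplectic basis of $H^1(J_\l,\Z)$. A direct period computation gives
\[
\wh\omega_i^{\an}=2\pi\sqrt{-1}\,\delta_i^*+\sum_{j=1}^g\log q_{ij}\,\gamma_j^*,\qquad
\wh\eta_j^{\an}=\gamma_j^*,
\]
the second identity because $\tau^B(\gamma_j^*)=0$ forces $\wh\eta_j^{\an}$ into the span of the $\gamma^*$'s and the pairing $\langle\wh\omega_i,\wh\eta_j\rangle=\delta_{ij}$ pins it down. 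Since $\delta_i^*$ and $\gamma_j^*$ are locally constant sections of $R^1h_*\C$, they are flat for $\nabla$. Differentiating the first formula then gives $\nabla(\wh\omega_i)=\sum_j d\log q_{ij}\ot\gamma_j^*=\sum_j(dq_{ij}/q_{ij})\ot\wh\eta_j$, and $\nabla(\wh\eta_j)=0$ is immediate.

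So the point you flagged as delicate --- identifying the $\wh\eta_j$-coefficient and proving $\nabla(\wh\eta_i)=0$ --- is not delicate at all once one uses the Betti picture: $\wh\eta_j$ is literally a flat integral class. Your proposed route via horizontality of $W_0$ in the limit MHS is morally the same statement, but proving it rigorously in the purely algebraic setting is strictly harder than just computing the periods. Your step~1 (differentiating $u_i\sim q_{ij}u_i$) is a heuristic for exactly the period formula above, but it does not by itself tell you the error term lands on $\wh\eta_j$; the Betti identification does. Also note: over $\wt R$ one is doing complex analysis, not rigid analysis, and the paper needs an intermediate spreading-out step (to a finite-type $\C$-scheme and then to a formal power series ring) before the analytic family makes sense --- you should include that.
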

\begin{pf}
By the assumption {\bf(C)}, we may replace $J_\eta/L$ with $(J_\eta\ot_L\wt L)/\wt L$.
We may assume $R=\wt R$, $V=\C$ and $J=J_\eta\ot_R\wt R$.
There is a smooth scheme $J_S/S$ over a
connected smooth affine variety $S=\Spec A$ over $\C$
with a Cartesian square
\[
\xymatrix{
J\ar[d]\ar[r]\ar@{}[rd]|{\square}&J_S\ar[d]\\
\Spec R\ar[r]&S
}
\]
such that $\Spec R\to S$ is dominant.
Let $D\subset S$ be a closed subset such that $J_S$ is proper over $U:=S\setminus D$.
Then the image of $\Spec R/I$ is contained in $D$ since
$J$ has a totally degeneration over $\Spec R/I$. 
Thus we may replace $R$ with the completion $\wh A_D$ of $A$ by the ideal
of $D$. Let $L_D=\Frac \wh A_D$ then
$\wh \Omega^1_{L_D/\C}\cong L_D\ot_A\Omega^1_{A/\C}$.
Let ${\frak m}\subset A$ be a maximal ideal containing $I$, and $\wh A_{\frak m}$ the completion
by $\frak m$. Then $\wh A_D\subset \wh A_{\frak m}$ and $\wh\Omega_{\wh A_D/\C}
\subset \wh\Omega_{\wh A_{\frak m}/\C}$.
Therefore we may further replace $\wh A_D$ with $\wh A_{\frak m}$.
Summing up the above, it is enough work in the following situation.
\[
R=\wh A_{\frak m}\cong \C[[x_1,\ldots, x_n]]\supset I=(x_1,\ldots,x_n),
\quad V=\C,\quad L=\Frac R
\]
\[
J=\Spec R\times_AJ_S\lra \Spec R.
\]
Note $\wh \Omega^1_{L/\C}\cong L\ot_A\Omega^1_{A/\C}$.
Let $h:\cJ\to S^{\an}$ be the analytic fibration associated to $J_S/S$.
Write $J_\l=h^{-1}(\l)$ a smooth fiber over $\l\in U^{\an}:=(S\setminus D)^{\an}$.
Let
\[
\nabla:\O_{U^{\an}}\ot R^1h_*\C\lra 
\Omega^1_{U^{\an}}\ot R^1h_*\C.
\] 
be the flat connection compatible with
the Gauss-Manin connection on 
$H^1_\dR(J_S/S)|_U$ under the comparison
\[
\O_{U^{\an}}\ot R^1h_*\C\cong \O_{U^{\an}}\ot_AH^1_\dR(J_S/S).
\]
We describe a de Rham symplectic basis in $\wh\omega^{\an}_i,\wh\eta^{\an}_j
\in \O_{U^{\an}}\ot_AH^1_\dR(J_S/S)$
and prove \eqref{gm-mt-sect-eq3} for the above flat connection.
Write $J_\l=(\C^\times)^g/\underline{q}^\Z$ for $\l\in U^{\an}$.
Let $(u_1,\ldots,u_g)$ denotes the coordinates of $(\C^\times)^g$.
Let $\delta_i\in H_1(J_\l,\Z)$ be the homology cycle defined by the circle 
$|u_i|=\ve$ with $0<\ve\ll1$.
Let $\gamma_j\in H_1(J_\l,\Z)$ be the homology cycle defined by
the path from $(1,\ldots,1)$ to $(q_{j1},\ldots,q_{jg})$.
As is well-known, the dual basis $\delta_i^*,\gamma^*_j\in H^1(J_\l,\Z)$
is a symplectic basis, namely
\[
\langle\delta^*_i,\delta^*_{i'}\rangle=
\langle\gamma^*_j,\gamma^*_{j'}\rangle=0,\quad
\langle\delta^*_i,\gamma^*_j\rangle=\frac{1}{2\pi\sqrt{-1}}\delta_{ij}
\]
where $\delta_{ij}$ denotes the Kronecker delta.
We have $\wh\omega^{\an}_i=du_i/u_i$ by {\bf (DS1)},  and then 
\begin{equation}\label{gm-mt-sect-eq5}
\wh\omega_i^{\an}=2\pi\sqrt{-1}\delta_i^*+\sum_{j=1}^g\log q_{ij}\gamma^*_j.
\end{equation}
Let $\tau^B:R^1h_*\Z\to \Z(-1)^g$ be the associated map to $\tau$.
An alternative description of $\tau^B$ is
\[\tau^B(x)=\frac{1}{2\pi\sqrt{-1}}((x,\delta_1),\ldots,(x,\delta_g))\]
where $(x,\delta)$ denotes the natural pairing
on $H^1(J_\l,\Z)\ot H_1(J_\l,\Z)$.
Obviously $\tau^B(\gamma_j^*)=0$.
This implies that
$\wh\eta_j$ is a linear combination
of $\gamma_1^*,\ldots,\gamma_g^*$ by {\bf(DS2)}.
Since $\langle \wh\omega^{\an}_i,\wh\eta_j\rangle=\delta_{ij}=\langle \wh\omega^{\an}_i,
\gamma^*_j\rangle$,
one concludes 
\[
\wh\eta_j^{\an}=\gamma^*_j.
\]
Now \eqref{gm-mt-sect-eq3} is immediate from this and \eqref{gm-mt-sect-eq5}.

Let $\wh\omega_i,\wh\eta_j\in H^1_\dR(J_\eta/L)$ be the de Rham symplectic basis.
Let $x\in D^{\an}$ be the point associated to $\frak m$.
Let $V^{\an}$ be a small neighborhood of $x$ and $j:V^{\an}\setminus D^{\an}\hra S^{\an}$ an open immersion .
Obviously 
$\wh\omega^{\an}_i=du_i/u_i\in\vg(V^{\an},j_*\O^{\an})\ot_A H^1_\dR(J_S/S)$.
Thanks to the uniquness property, this implies 
$\wh\eta^{\an}_j=\gamma_j^*\in\vg(V^{\an},j_*\O^{\an})\ot_A H^1_\dR(J_S/S)$,
in other words, $\gamma_j^*\in \vg(V^{\an}\setminus D^{\an},j^{-1}R^1h_*\Q)$.
Let $\wh S_{\frak m}$ be the ring of power series over $\C$ containing
$\wh A_m$ and $\vg(V^{\an},j_*\O^{\an})$. 
There is a commutative diagram
\[
\xymatrix{
\wh A_{\frak m}\ot_A\Omega^1_{A/\C}\ot_AH^1_\dR(J_S/S)\ar[r]&
\wh S_{\frak m}\ot_A\Omega^1_{A/\C}\ot_AH^1_\dR(J_S/S)\\
\Omega^1_{A/\C}\ot_AH^1_\dR(J_S/S)\ar[r]\ar[u]
&\vg(V^{\an},j_*\O^{\an})\ot_A\Omega^1_{A/\C}\ot_A H^1_\dR(J_S/S)\ar[u]
}
\]
with all arrows injective.
Hence the desired assertion for
$\wh\omega_i,\wh\eta_j$ can be reduced
to that of $\wh\omega_i^{\an},\wh\eta_j^{\an}$.
This completes the proof.
\end{pf}

\subsection{Periods of Jacobian scheme $J(\cX/W((\l)))$}\label{gm-jac-sect}
Let $f:Y\to \P^1_{\Z_p}$ be the HG fibration in \S \ref{main-sect-3}.
Let $W=W(\ol\F_p)$ be the Witt ring.
Recall that 
$f$ has a totally degenerate semistable reduction at $\l=0$.
The fiber $Z=f^{-1}(0)=Z_1\cup Z_2$ is a relative SNCD over $\Z_p$
and the intersection points $Z_1\cap Z_2$ consists of $W$-rational points 
\begin{equation}\label{sing-point}
P_\nu:\l_1=z=0,\,w_1=-\nu,\quad(\nu\in W^\times,\,\nu^N=-1)
\end{equation}
in the affine open set \eqref{int-locus3}.
Write $W((\l)):=W[[\l]][\l^{-1}]$.
Put $\cY:=Y\times_{\Z_p[\l]} \Spec W[[\l]]$ and $\cX:=\cY\times _{W[[\l]]}W((\l))$:
\begin{equation}\label{gm-sect-eq1}
\xymatrix{
Z\ar[r]\ar[d]&\cY\ar[d]&\cX\ar[l]\ar[d]\\
\Spec W\ar[r]&\Spec W[[\l]]&\Spec W((\l)).\ar[l]
}
\end{equation}
Let $J(\cY/W[[\l]])$ the Picard scheme (Jacobian scheme) associated to $\cY/W[[\l]]$.
Then $J(\cY/W[[\l]])$ is a totally degenerating abelian scheme over $(W[[\l]],(\l))$
in the sense of \S \ref{gm-mt-sect}.
Moreover the torus $T\cong \Pic^0(Z)$ is split
since $Z_1$, $Z_2$ and $P_\nu$ are geometrically irreducible over $W$.

\begin{thm}\label{gm-jac-sect-prop1}
Put 
\[
\kappa_n:=2\log N-\frac{1}{2}\sum_{\ve^N=1,\ve\ne1}(\ve^n+\ve^{-n})\log
(1-\ve)(1-\ve^{-1}),
\]
\[
K_{n,i}:=\sum_{k=1}^i\left(\frac{2}{k}-\frac{1}{k-n/N}-\frac{1}{k-1+n/N}\right).
\]
Put for $1\leq n\leq N-1$
\begin{equation}\label{gm-jac-sect-prop1-tau}
\tau_n(\l):=-\log\l+\kappa_n+
\frac{1}{F_n(\l)}\left(\sum_{i=1}^\infty K_{n,i}
\frac{\left(\frac{n}{N}\right)_i\left(1-\frac{n}{N}\right)_i}{i!^2}\l^i
\right).
\end{equation}
Fix a primitive $N$-th root $\zeta\in \Z_p^\times$ of unity, and put
$P:=(\zeta^{-ij})_{1\leq i,j\leq N-1}\in\mathrm{GL}_{N-1}(\Z_p)$.
Let
\begin{equation}\label{gm-jac-sect-prop1-eq0}
\underline{\tau}=(\tau_{ij}):=
P\begin{pmatrix}
(\zeta+\zeta^{-1}-2)\tau_1(\l)&&\text{\rm\Large 0}\\
&\ddots\\
\text{\rm\Large 0}&&(\zeta^{N-1}+\zeta^{1-N}-2)\tau_{N-1}(\l)
\end{pmatrix}P^{-1},
\end{equation}
be a $(N-1)\times(N-1)$-matrix and put
\[
\underline{q}:=(q_{ij})=(\exp(\tau_{ij})),
\]
where we regard $q_{ij}\in \Q_p((\l))$ (see Remark \ref{gm-jac-sect-rem} below).
Then $q_{ij}\in \Z_p((\l))$ and there is a surjective homomorphism
\[\bG_{m,W((\l))}^{N-1}/\underline{q}^\Z\lra J(\cX/W((\l)))\]
as abelian schemes over $W((\l))$.
Moreover, let $[\zeta]\in\mathrm{Aut}(X/S)$ denotes the automorphism 
given by $[\zeta](x,y)=(x,\zeta^{-1}y)$.
Then the induced automorphism on $J(\cX/W((\l)))$ is compatible with
the automorphism of $\bG_m^{N-1}/\underline{q}^\Z$ induced from 
a multiplication by matrix
\begin{equation}\label{gm-jac-sect-prop1-eq00}
P\begin{pmatrix}
\zeta&&\text{\rm\Large 0}\\
&\ddots\\
\text{\rm\Large 0}&&\zeta^{N-1}
\end{pmatrix}P^{-1}=\begin{pmatrix}
-1&-1&\cdots&-1&-1\\
1&0&\cdots&0&0\\
0&1&\cdots&0&0\\
\vdots&&\cdots&&\vdots\\
0&&\cdots&1&0
\end{pmatrix}
\end{equation}
on $\mathrm{Lie}(\bG_m^{N-1})$ (i.e. $(u_1,u_2,\ldots,u_{N-1})\mapsto
((u_1\cdots u_{N-1})^{-1},u_1,\ldots,u_{N-2})$ on $\bG_m^{N-1}$).
\end{thm}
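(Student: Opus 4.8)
The strategy is to apply Proposition~\ref{gm-mt-sect-prop1} to the Jacobian $J(\cY/W[[\l]])$ over $(R,I)=(W[[\l]],(\l))$, using as the subring $V$ the Witt ring $W$ so that $\wh\Omega^1_{L/V}$ is generated by $d\l/\l$; the condition $\mathbf{(C)}$ is verified by embedding $W[[\l]]\hookrightarrow\C[[\l]]$ after choosing an embedding $W\hookrightarrow\C$ (the resulting map on continuous differentials is manifestly injective, since both sides are free of rank one on $d\l$). Thus the Gauss--Manin connection on $H^1_\dR(\cX/W((\l)))$ is computed by \eqref{gm-mt-sect-eq3} once we identify the de Rham symplectic basis $\wh\omega_i,\wh\eta_j$ attached to a choice of multiplicative coordinates $(u_1,\dots,u_{N-1})$ on the uniformizing torus. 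The point is then to match this basis against the eigenbasis $\omega_n,\eta_n$ of \eqref{main-sect-1-eq3}: diagonalizing the $\mu_N$-action via $P$, the torus $\bG_m^{N-1}$ decomposes into one-dimensional eigen-pieces, on the $n$-th of which the period is the scalar $q_n:=\exp((\zeta^n+\zeta^{-n}-2)\tau_n(\l))$, and I must show the Gauss--Manin connection on $H^1_\dR(X/S)^{(n)}$ in the basis $(\wt\omega_n,\wt\eta_n)$ (or equivalently in $(\omega_n,\eta_n)$) is precisely $\nabla\wh\omega_n=\frac{dq_n}{q_n}\otimes\wh\eta_n$, $\nabla\wh\eta_n=0$, i.e. the connection matrix has the single nonzero entry $d\tau_n=\frac{d}{d\l}\big((\zeta^n+\zeta^{-n}-2)\tau_n\big)\,d\l$.

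The computation of the Gauss--Manin connection on each rank-two eigenspace is classical hypergeometric theory: $\omega_n$ and $\eta_n$ span a Picard--Fuchs system whose solutions are built from ${}_2F_1(\tfrac nN,1-\tfrac nN,1;\l)=F_n(\l)$, and a direct calculation (Griffiths--Dwork, or the explicit formulae already recorded in \S\ref{main-sect-1}) gives $\nabla$ in the basis $(\omega_n,\eta_n)$. One then changes to the basis $(\wt\omega_n,\wt\eta_n)$ of \eqref{main-thm-wt}, which is tailored so that $\wt\omega_n$ is horizontal-looking and the connection becomes unipotent with off-diagonal entry expressible through $F_n$; comparing with the logarithmic derivative of $q_n$ forces the definition \eqref{gm-jac-sect-prop1-tau} of $\tau_n(\l)$, and in particular pins down $K_{n,i}$ as the coefficients in the series part and $\kappa_n$, $-\log\l$ as the ``constant of integration'' (the $-\log\l$ reflecting the logarithmic monodromy at $\l=0$, the $\kappa_n$ a genuine constant). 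Since $\wh\omega_n,\wh\eta_n$ are determined uniquely by $\mathbf{(DS1)},\mathbf{(DS2)}$ once the coordinates $u_i$ are fixed, and since $F_n(\l)\in\Z_p[[\l]]$ has constant term $1$ so that everything is integral, we get $q_{ij}=\exp(\tau_{ij})\in\Z_p((\l))$ and the asserted surjection $\bG_{m,W((\l))}^{N-1}/\uq^\Z\to J(\cX/W((\l)))$ from Mumford's construction together with the isomorphism $J(\cY/W[[\l]])\cong\bG_m^{N-1}/\uq^\Z$ of \S\ref{gm-mt-sect} restricted to the generic fibre over $W((\l))$.

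For the equivariance statement, I would argue that $[\zeta]$ acts on $\Pic^0(Z)=T$ through the permutation of the components/nodes recorded in \S\ref{gm-jac-sect}, hence on the character lattice $\Z^{N-1}$ (ordered so that the $n$-th node $P_{\nu}$ contributes the $n$-th coordinate) by the companion-type matrix on the right of \eqref{gm-jac-sect-prop1-eq00}; conjugating by $P$ diagonalizes this to $\mathrm{diag}(\zeta,\dots,\zeta^{N-1})$, exactly matching the eigen-decomposition used above. Concretely, the cyclic relabeling $P_\nu\mapsto P_{\zeta\nu}$ of the $N$ nodes $\{w_1^N=(-1)^{?}\dots\}$ induces $(u_1,\dots,u_{N-1})\mapsto((u_1\cdots u_{N-1})^{-1},u_1,\dots,u_{N-2})$ on $\bG_m^{N-1}=T$, and one checks compatibility with $\rho$ by functoriality of Mumford's uniformization under automorphisms of totally degenerate semistable fibres.

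\textbf{Main obstacle.} The delicate part is not the formal framework but the \emph{exact} determination of the additive period $\tau_n(\l)$ — both the hypergeometric series coefficient $K_{n,i}$ and, above all, the constant term $\kappa_n$. The series part comes out of the Picard--Fuchs equation by a somewhat intricate but mechanical manipulation of ${}_2F_1$; the constant $\kappa_n=2\log N-\tfrac12\sum_{\ve^N=1,\ve\ne1}(\ve^n+\ve^{-n})\log(1-\ve)(1-\ve^{-1})$, however, has to be extracted from the \emph{degeneration} at $\l=0$ — i.e. from the explicit multiplicative periods of the torus $T=\Pic^0(Z)$ computed via the intersection combinatorics of $Z=Z_1\cup Z_2$ and the positions \eqref{sing-point} of the nodes $P_\nu$ — and matched against the leading asymptotics of the chosen de Rham basis. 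Getting the normalization (which coordinates $u_i$, which symplectic pairing sign, which branch of $\log$) consistent so that this constant comes out in the stated closed form is where the real work lies.
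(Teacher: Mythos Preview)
Your approach is essentially the reverse of the paper's, and in that reversal lies a genuine gap.

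The paper does \emph{not} use Proposition~\ref{gm-mt-sect-prop1} to prove Theorem~\ref{gm-jac-sect-prop1}. Instead it fixes an embedding $\iota:W\hookrightarrow\C$ and computes the period matrix of $J(X_\l)$ directly on the complex--analytic side: one writes down explicit homology cycles $\delta_i:=(1-[\zeta_N^i])e$ (with $e$ a path over $[0,1]$) and $\gamma_j:=T_1\delta_j$ (image under the local monodromy at $\l=1$), and evaluates $\int_{\delta_i}\omega_n$, $\int_{\gamma_j}\omega_n$ via the Euler integral representation of ${}_2F_1$ and the connection formula at $\l=1$ (NIST 15.8.10). The period matrix then factors as $(\ast)[I\mid(\ast\ast)]$ with $(\ast\ast)=P\,\mathrm{diag}(2\pi i\,g_n/f_n)\,P^{-1}$, and the identity
\[
2\pi i\,\frac{g_n}{f_n}=\Big(\frac{2\pi i}{B(a_n,1-a_n)}\Big)^{\!2}\tau_n(\l)=(\zeta_N^n+\zeta_N^{-n}-2)\,\tau_n(\l)
\]
is obtained from the logarithmic asymptotics of ${}_2F_1(a_n,1-a_n,1;1-\l)$ together with Gauss's closed form for $\psi(a_n)+\psi(1-a_n)$. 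Thus $\kappa_n$ emerges from a transcendental special--function identity, not from the degeneration at $\l=0$. Integrality of $q_{ij}$ and descent to $W((\l))$ then follow because the matrix $(\ast)$ has entries in $\iota(W)[[\l]]$.

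Your plan inverts the logic: you want to use Proposition~\ref{gm-mt-sect-prop1} to read $d\log q_{ij}$ off the Gauss--Manin connection and then recover $q_{ij}$. Two problems. First, Proposition~\ref{gm-mt-sect-prop1} takes the uniformization $\rho$ and the de Rham symplectic basis $\wh\omega_i,\wh\eta_j$ as \emph{input}; you need to know that (after diagonalizing by $P$) these coincide up to rational scalars with $\wt\omega_n,\wt\eta_n$. In the paper that identification is Proposition~\ref{gm-jac-sect-prop2}, and its proof \emph{uses} the period integrals computed in the proof of Theorem~\ref{gm-jac-sect-prop1}. So invoking it here is circular unless you supply an independent argument (your $\mu_N$--equivariance sketch determines the eigenspace decomposition but not the normalization that pins $\wh\omega_n$ to $\wt\omega_n$ rather than to some other element of the line). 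Second, and as you yourself flag, the connection only determines $d\tau_n$; the constant $\kappa_n$ is left undetermined, and your proposal to extract it from the combinatorics of $Z_1\cup Z_2$ and the node positions $P_\nu$ is not carried out. In the paper the role of Proposition~\ref{gm-mt-sect-prop1} is the opposite of what you envisage: it is applied \emph{after} Theorem~\ref{gm-jac-sect-prop1} (see Lemma~\ref{gm-jac-sect-lem1}) to derive the differential equation $\l\,\tau_n'=-(1-\l)^{-1}F_n(\l)^{-2}$ as a consequence of the already--known periods, not as a tool to find them.
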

\begin{rem}\label{gm-jac-sect-rem}
The definition \eqref{gm-jac-sect-prop1-eq0} is rewritten as
\[
\underline{\tau}=\frac{1}{N}\sum_{n=1}^{N-1}
(\zeta^n+\zeta^{-n}-2)\tau_n(\l)P_n ,\quad P_n:=(\zeta^{-in}(\zeta^{jn}-1))_{1\leq i,j\leq N-1}.
\]
Therefore 
the coefficients of ``$\log\l$'', ``$\log N$'' and ``$\log(1-\ve)(1-\ve^{-1})$''
are rational integers, and hence one can regard $q_{ij}\in \Q(\zeta)((\l))\subset \Q_p((\l))$.
\end{rem}
\begin{pf}
Fix an embedding $\iota:W\hra \C$ such that $\iota(\zeta)=\zeta_N:=
\exp(2\pi i/N)\in \C^\times$ and consider the analytic fibration
$f:X^{\an}\to S^{\an}$. Write $X_\l:=f^{-1}(\l)$ for $\l\in S^{\an}$.
We first compute the period integrals
\[\int_{\gamma_i}\omega_n
\]
where $\omega_n$ is the holomorphic 1-form 
in \eqref{main-sect-1-eq3}, and $\gamma_i$'s are the basis
of $H_1(X_\l,\Q)$ which we shall give in below.

Let $X_\l\to \P^1$ be the $N$-th cyclic covering
given by $(x,y)\mapsto x$.
This is ramified at $x=0,1$ with the ramification degree $N$, 
so that there are unique points $P_0$, $P_1\in X_\l$ above $x=0,1$ respectively.  
Let $|\l|<1$. Let $e$ be a path in $X_\l$ from $P_0$ to $P_1$ such that $x\in [0,1]$ and
$y=x^{1/N}(1-x)^{1-1/N}(1-\l x)^{1/N}$ takes the principal values.
Then
\begin{equation}\label{gm-jac-sect-prop1-eq1}
\int_e\omega_n=B(a_n,1-a_n){}_2F_1(a_n,1-a_n,1;\l),\quad a_n:=\frac{n}{N}
\end{equation}
by \cite{NIST} 15.6.1.
Let $[\zeta]:X_\l\to X_\l$ denotes the automorphism given by $[\zeta](x,y)=(x,\zeta_N^{-1}y)$.
We then have homology cycles $\delta_i:=(1-[\zeta_N^i])e\in H_1(X_\l,\Z)$, and 
\begin{equation}\label{gm-jac-sect-prop1-eq2}
\int_{\delta_i}\omega_n=(1-\zeta_N^{in})\int_e\omega_n
=(1-\zeta_N^{in})B(a_n,1-a_n){}_2F_1(a_n,1-a_n,1;\l)
\end{equation}
as $[\zeta_N^i]\omega_n=\zeta_N^{in}\omega_n$.
Let $T_1\in\pi_1(S^{\an},\l)$ be the local monodromy at $\l=1$, and put $\gamma_i:=T_1\delta_i$.
Then one has
\begin{equation}\label{gm-jac-sect-prop1-eq3}
\int_{\gamma_i}\omega_n
=(1-\zeta_N^{in})\cdot B(a_n,1-a_n)T_1{}_2F_1(a_n,1-a_n,1;\l)
=(1-\zeta_N^{in})\cdot 2\pi \sqrt{-1}{}_2F_1(a_n,1-a_n,1;1-\l)
\end{equation}
where the second equality follows from \cite{NIST} 15.8.10.
Since \eqref{gm-jac-sect-prop1-eq2} and \eqref{gm-jac-sect-prop1-eq3} are not
proportional and $\det(1-\zeta_N^{in})_{1\leq i,n\leq N-1}\ne0$, the homology cycles
$\delta_i,\gamma_j$ are linearly independent in $H_1(X_\l,\Q)$, and hence they give a basis
over $\Q$.
Put $f_n:=B(a_n,1-a_n){}_2F_1(a_n,1-a_n,1;\l)$, $g_n=2\pi\sqrt{-1}{}_2F_1(a_n,1-a_n,1;1-\l)$
and a $(N-1)\times(N-1)$-matrix
\[
Q:=\begin{pmatrix}
1-\zeta_N&1-\zeta^2_N&\cdots&1-\zeta^{N-1}_N\\
\vdots&\vdots&&\vdots\\
1-\zeta^{N-1}_N&1-\zeta^{2(N-1)}_N&\cdots&1-\zeta^{(N-1)^2}_N\\
\end{pmatrix}
=-N
\begin{pmatrix}
\zeta^{-1}_N&\cdots&\zeta^{-N+1}_N\\
\vdots&&\vdots\\
\zeta^{-N+1}_N&\cdots&\zeta^{-(N-1)^2}_N
\end{pmatrix}^{-1}.
\]
The period matrix is given as follows
\begin{align*}
&\left(\int_{\delta_i}\omega_n;\int_{\gamma_j}\omega_n\right)\\
&=
\begin{pmatrix}
(1-\zeta_N)f_1&\cdots&(1-\zeta^{N-1}_N)f_1&
(1-\zeta_N)g_1&\cdots&(1-\zeta^{N-1}_N)g_1\\
(1-\zeta^2_N)f_2&\cdots&(1-\zeta^{2(N-1)}_N)f_2&
(1-\zeta^2_N)g_2&\cdots&(1-\zeta^{2(N-1)}_N)g_2\\
&\vdots&&&\vdots&\\
(1-\zeta^{N-1}_N)f_{N-1}&\cdots&(1-\zeta^{(N-1)^2}_N)f_{N-1}&
(1-\zeta^{N-1}_N)g_{N-1}&\cdots&(1-\zeta^{(N-1)^2}_N)g_{N-1}\\
\end{pmatrix}\\
&=\begin{pmatrix}
f_1&&\\
&\ddots&\\
&&f_{N-1}
\end{pmatrix}Q \left[
\begin{matrix}1&&&\text{\rm\Large 0}\\&1&\\
&&\ddots\\
\text{\rm\Large 0}&&&1\end{matrix} \quad 
Q^{-1}\begin{pmatrix}g_1/f_1&&\\&g_2/f_2&\\
&&\ddots\\
&&&g_{N-1}/f_{N-1}\end{pmatrix} 
Q
\right]
\\
&=\overbrace{\frac{1}{2\pi i}\begin{pmatrix}
f_1&&\\
&\ddots&\\
&&f_{N-1}
\end{pmatrix}Q}^{(\ast)}
\left[
 \begin{matrix}
2\pi i &&\text{\rm\Large 0}\\
&\ddots&\\
\text{\rm\Large 0}&&2\pi i 
\end{matrix} \quad 
\overbrace{\iota(P)\begin{pmatrix}2\pi i \, g_1/f_1&&\\
&\ddots\\
&&2\pi i \, g_{N-1}/f_{N-1}\end{pmatrix}\iota(P)^{-1}}^{(\ast\ast)}
\right].
\end{align*}
Using the expansion of $F_n(\l)$ at $\l=1$ (\cite{NIST} 15.8.10) together with a formula
of Gauss on digamma function
(\cite{Bateman} 1.7.3. (29), p.19), one has
\[
2\pi i\,\frac{g_n}{f_n}=\left(\frac{2\pi i}{B(a_n,1-a_n)}\right)^2\tau_n(\l)
=(\zeta_N^n+\zeta_N^{-n}-2)\tau_n(\l).
\]
Hence the matrix $(\ast\ast)$
coincides with $\underline{\tau}$ 
in \eqref{gm-jac-sect-prop1-eq0} via the embedding $\iota:W\hra \C$.
Since $\langle\delta_i,\gamma_j\rangle\subset H_1(X_t,\Z)$ is a sub $\Z$-module,
one has a surjective homomorphism 
$(\C^\times)^g/\underline{q}^\Z\to J(X^{\an}_\l)$ locally around $\l=0$.
This means 
that there is a surjective homomorphism
\begin{equation}\label{gm-jac-sect-prop1-eq4}
\bG_{m,\C((\l))}^{N-1}/\underline{q}^\Z\lra \C((\l))\times_{W((\l))}J(\cX/W((\l)))
\end{equation} 
as abelian schemes over $\C((\l))$.
Since $2\pi i/B(a_n,1-a_n)=e^{\pi i n/N}-e^{-\pi i n/N}\in \iota(W^\times)$ 
and $Q\in\mathrm{GL}(\iota(W))$, the matrix
$(\ast)$ is an invertible matrix with coefficients in $\iota(W)[[\l]]$.
Since $\omega_1,\ldots,\omega_{N-1}$ forms a free basis of $H^0(\Omega^1_{\cX/W((\l))})$
over $W((\l))$, 
this implies that there is a uniformization $\bG_{m,W((\l))}^{N-1}\to J(\cX/W((\l)))$ 
over $W((\l))$ which sits into a commutative diagram
\[
\xymatrix{
\bG_{m,\C((\l))}^{N-1}\ar[r]\ar[d]&\bG_{m,W((\l))}^{N-1}\ar[d]\\
\bG_{m,\C((\l))}^{N-1}/\underline{q}^\Z\ar[r]&J(\cX/W((\l)))
}
\]
where the top arrow is the base change by $\iota$
and the bottom arrow is the composition of \eqref{gm-jac-sect-prop1-eq4} and 
the base change by $\iota$.
This implies $q_{ij}\in W((\l))$ (and hence $q_{jj}\in \Z_p((\l))$) and 
a surjective homomorphism
\[
\bG_{m,W((\l))}^{N-1}/\underline{q}^\Z\to J(\cX/W((\l)))\] 
as abelian schemes over $W((\l))$.
\end{pf}

\subsection{De Rham symplectic basis of $J(\cX/W((\l)))$}\label{gm-dR-sect}
Let $J=J(\cY/W[[\l]])$ be as before.
Let $L=\Frac W[[\l]]$ and denote by $J_\eta$ the generic fiber over $\Spec L$.
We give an explicit description of an de Rham symplectic basis $\{\wh\omega_i,\wh\eta_j\}$
of $H^1_\dR(J_\eta/L)$ in the sense of \S \ref{gm-mt-sect}.
\begin{prop}\label{gm-jac-sect-prop2}
Let $\wt\omega_n$ and $\wt\eta_n$ be as in \eqref{main-thm-wt}.
Put
\[
\wh\omega(\nu)=\sum_{n=1}^{N-1}\nu^{n}\wt\omega_n,\quad
\widehat\eta(\nu)=\sum_{n=1}^{N-1}\nu^{-n}\wt\eta_n
\]
for $\nu\in W^\times$ such that $\nu^N=-1$.
Then $\wh\omega_i$ are $\Q$-linear combinations of $\wh\omega(\nu)$'s,
and $\wh\eta_i$ are $\Q$-linear combinations of $\wh\eta(\nu)$'s.
\end{prop}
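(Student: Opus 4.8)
The plan is to compare the two bases by transporting everything, via an embedding $\iota\colon W\hookrightarrow\C$ with $\iota(\zeta)=e^{2\pi\sqrt{-1}/N}$, to the complex-analytic picture set up in the proofs of Proposition \ref{gm-mt-sect-prop1} and Theorem \ref{gm-jac-sect-prop1}. Recall that there the algebraic residue map $\tau$ is identified with the topological $\tau^B$, that $\wh\omega_i^{\mathrm{an}}=2\pi\sqrt{-1}\,\delta_i^\ast+\sum_j(\log q_{ij})\gamma_j^\ast$ and $\wh\eta_j^{\mathrm{an}}=\gamma_j^\ast$ with $\delta_i^\ast,\gamma_j^\ast$ the basis of $H^1(X_\l,\Q)$ dual to the cycles $\delta_i=(1-[\zeta_N^i])e$, $\gamma_j=T_1\delta_j$, and that the periods of $\omega_n$, $\eta_n$ over these cycles are known in terms of ${}_2F_1$. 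Since $\wt\omega_n,\wt\eta_n,\wh\omega(\nu),\wh\eta(\nu)$ all live in $H^1_\dR(\cX/W((\l)))$, which injects into $H^1_\dR(\cX\otimes_\iota\C((\l)))$, it suffices to prove the asserted $\Q$-linear relations after base change along $\iota$. I also record the fact $U=\ker\nabla$: indeed the connection matrix $(d\log q_{ij})$ of Proposition \ref{gm-mt-sect-prop1} has residues at $\l=0$ forming an invertible matrix (visible from the $-\log\l$ term of $\tau_n(\l)$ in Theorem \ref{gm-jac-sect-prop1}), so no nonzero combination of the $\wh\omega_i$ is horizontal; hence $\ker\nabla=\sum_j W((\l))\wh\eta_j=U$.

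\emph{The $\omega$-side.} The point of the normalization $\wt\omega_n=F_n(\l)^{-1}\omega_n$ is to make the periods of $\wt\omega_n$ independent of $\l$: by \eqref{gm-jac-sect-prop1-eq1}, $\int_e\omega_n=B(a_n,1-a_n)F_n(\l)$ (with $a_n=n/N$), so $\int_{\delta_k}\wt\omega_n=(1-\zeta_N^{kn})B(a_n,1-a_n)$, and using $2\pi\sqrt{-1}/B(a_n,1-a_n)=\zeta_{2N}^n-\zeta_{2N}^{-n}\in\iota(W^\times)$ this rearranges into $\tfrac{1}{2\pi\sqrt{-1}}\int_{\delta_k}\wt\omega_n=-\sum_{m=0}^{k-1}\nu_m^{\,n}$, where $\nu_m:=\zeta_{2N}^{2m+1}$ ($0\le m\le N-1$) enumerates $\{\nu:\nu^N=-1\}$. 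As $\omega_n$ is a holomorphic form, $\wt\omega_n$ lies in the span of the $\wh\omega_k$ (both being bases of the holomorphic forms); comparing $\delta$-periods (with $\int_{\delta_k}\wh\omega_i^{\mathrm{an}}=2\pi\sqrt{-1}\,\delta_{ik}$ and $\tau=\tau^B$) yields the \emph{constant} expansion $\wt\omega_n=-\sum_{k=1}^{N-1}\bigl(\sum_{m=0}^{k-1}\nu_m^{\,n}\bigr)\wh\omega_k$. Substituting into $\wh\omega(\nu_l)=\sum_n\nu_l^{\,n}\wt\omega_n$ and evaluating $\sum_{n=1}^{N-1}(\nu_l\nu_m)^n=N\,[\,l+m+1\equiv0\ (\mathrm{mod}\ N)\,]-1$ collapses the double sum to $\wh\omega(\nu_l)=\sum_{i=1}^{N-1}\bigl(i-N\,[\,i\ge N-l\,]\bigr)\wh\omega_i$; the difference of two cyclically consecutive terms telescopes to $\wh\omega_j=\tfrac1N\bigl(\wh\omega(\nu_{N-1-j})-\wh\omega(\nu_{N-j})\bigr)$ for $1\le j\le N-1$, the desired $\Q$-linear expression.

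\emph{The $\eta$-side.} First I would check $\wt\eta_n\in U$. Using \eqref{main-thm-wt} together with the identity $\int_e\eta_n=\tfrac Nn B(a_n,1-a_n)F_n'(\l)$ — a consequence of Euler's integral representation of ${}_2F_1$ and $\tfrac{d}{d\l}{}_2F_1(a,b,c;\l)=\tfrac{ab}{c}{}_2F_1(a{+}1,b{+}1,c{+}1;\l)$ — one finds $\int_{\delta_k}\wt\eta_n=0$, i.e. $\tau(\wt\eta_n)=0$, so $\wt\eta_n\in U=\ker\nabla$. The $\wt\eta_n$ lie in distinct $\mu_N$-eigenspaces, hence form a $W((\l))$-basis of $\ker\nabla$, as do the $\wh\eta_j$; therefore, differentiating an expansion $\wt\eta_n=\sum_k c_{nk}\wh\eta_k$ shows the $c_{nk}$ are \emph{constant}, and pairing against $\gamma_k$ (using $\wh\eta_k^{\mathrm{an}}=\gamma_k^\ast$) gives $c_{nk}=\int_{\gamma_k}\wt\eta_n$, a constant which by $\mu_N$-equivariance equals $(1-\zeta_N^{kn})$ times a number depending only on $n$, computable from the monodromy behaviour of ${}_2F_1$ (alternatively one pins the matrix down via $\langle\wh\omega_i,\wh\eta_j\rangle=\delta_{ij}$ and the $\mu_N$-equivariant cup product, noting $\langle\wt\omega_m,\wt\eta_n\rangle=0$ unless $m+n\equiv0$ $(\mathrm{mod}\ N)$ and that the surviving entries are constant because the $\wt\eta$'s are horizontal and lie in $U$). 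Re-running the telescoping argument of the $\omega$-side in dual form — with $\wh\eta(\nu)=\sum_n\nu^{-n}\wt\eta_n$ and the matrix \eqref{gm-jac-sect-prop1-eq00} governing $[\zeta]$ on $\mathrm{Lie}(\bG_m^{N-1})$ — then expresses each $\wh\eta_j$ as a $\Q$-linear combination of the $\wh\eta(\nu)$.

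\emph{Main obstacle.} Conceptually the argument is forced: the tilde-normalization replaces $\l$-dependent periods by constants, and the residual cyclotomic constants must cancel into rational numbers upon forming the finite differences $\wh\omega(\nu_{l-1})-\wh\omega(\nu_l)$, by the cyclic (companion-matrix) structure of the $[\zeta]$-action recorded in Theorem \ref{gm-jac-sect-prop1}. The two points that need genuine care are (i) matching the algebraic residue map $\tau$ of \S\ref{gm-mt-sect} with the topological $\tau^B$ under the comparison isomorphism — the same comparison already invoked at the end of the proof of Proposition \ref{gm-mt-sect-prop1} — and (ii) on the $\eta$-side, establishing $\wt\eta_n\in U$ (equivalently, the hypergeometric identity for $\int_e\eta_n$) and computing the resulting constant transition matrix; granted these, the rest is the finite linear algebra sketched above.
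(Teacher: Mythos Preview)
Your approach is the paper's: pass to $\C$ via $\iota$ and show $\tfrac{1}{2\pi\sqrt{-1}}\int_{\delta_i}\wh\omega(\nu)\in\Q$, $\int_{\delta_i}\wh\eta(\nu)=0$, and $\int_{\gamma_j}\wh\eta(\nu)\in\Q$, with the same hypergeometric period formulas; on the $\omega$-side you are in fact more explicit than the paper. The one computation you leave open---$\int_{\gamma_j}\wt\eta_n$---is handled in the paper by the Wronskian identity $(\l-\l^2)\bigl(F_n'(\l)F_n(1-\l)+F_n(\l)F_n'(1-\l)\bigr)=B(a_n,1-a_n)^{-1}$ (proved by differentiating and letting $\l\to0$), giving $\int_{\gamma_j}\wt\eta_n=-\zeta_{2N}^{-n}(1-\zeta_N^n)(1-\zeta_N^{jn})$ and hence $\int_{\gamma_j}\wh\eta(\nu)\in\Z$; the detour through $U=\ker\nabla$ is correct but unnecessary for this.
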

\begin{pf}
We again take an embedding $W\hra \C$, and consider the analytic fibration
$\cJ\to S^{\an}$ as in the proof of Theorem \ref{gm-jac-sect-prop1}.
Letting $\delta_i,\gamma_j\in H_1(X_\l,\Z)$ be as in there, it is enough to show
\begin{equation}\label{gm-jac-sect-prop2-eq1}
\frac{1}{2\pi\sqrt{-1}}\int_{\delta_i}\omega(\nu)\in \Q
\end{equation}
and 
\begin{equation}\label{gm-jac-sect-prop2-eq2}
\int_{\delta_i}\eta(\nu)=0,\quad
\int_{\gamma_j}\eta(\nu)\in \Q.
\end{equation}
Recall \eqref{gm-jac-sect-prop1-eq2}
\[
\int_{\delta_i}\omega_n=(1-\zeta_N^{in})B(a_n,1-a_n){}_2F_1(a_n,1-a_n,1;\l)
=-2\pi \sqrt{-1}\cdot\zeta_{2N}^n\frac{1-\zeta_N^{in}}{1-\zeta_N^n}
F_n(\l)
\]
where $\zeta_{2N}:=\exp(\pi\sqrt{-1}n/N)$.
Therefore we have
\[
\frac{1}{2\pi\sqrt{-1}}\int_{\delta_i}\omega(\nu)=-\sum_{n=1}^{N-1}\nu^n\cdot
\zeta_{2N}^n\frac{1-\zeta_N^{in}}{1-\zeta_N^n}\in \Z
\]
the desired assertion \eqref{gm-jac-sect-prop2-eq1}.
To show \eqref{gm-jac-sect-prop2-eq2}, we note 
\begin{equation}\label{gm-jac-sect-prop2-eq3}
\int_e\eta_n=B(2-a_n,a_n){}_2F_1(1+a_n,2-a_n,2;\l)=\frac{N}{n}B(a_n,1-a_n)F'_n(\l).
\end{equation}
Therefore we have
\[
\int_{\delta_i}
\wt\eta_n=(1-\zeta_N^{in})B(a_n,1-a_n)(\l-\l^2)(F'_n(\l)F_n(\l)-F'_n(\l)F_n(\l))=0,
\]
\begin{align*}
\int_{\gamma_i}\wt\eta_n
&=(\l-\l^2)\left(\int_{\gamma_i}F'_n(\l)\omega_n-\frac{n}{N}F_n(\l)\eta_n\right)\\
&=(\l-\l^2)(1-\zeta_N^{in})\cdot 2\pi \sqrt{-1}(F'_n(\l)F_n(1-\l)+F_n(\l)F'_n(1-\l))\\
&=(1-\zeta_N^{in})\frac{2\pi \sqrt{-1}}{B(a_n,1-a_n)}\tag{$\ast$}\\
&=-\zeta_{2N}^{-n}(1-\zeta_N^n)(1-\zeta_N^{in}).
\end{align*}
Here the equality ($\ast$) is proven in the following way.
Let $f(\l):= (\l-\l^2)(F'_n(\l)F_n(1-\l)+F_n(\l)F'_n(1-\l))$. It follows from the hypergeometric
differential equation for $F_n(\l)$ (\cite{NIST} 15.10.1) that one has $f'(\l)=0$, namely $f(\l)$
is a constant function. Using the expansion of $F_n(\l)$ at $\l=1$ (\cite{NIST} 15.8.10),
one has $\lim_{\l\to0}f(\l)=B(a_n,1-a_n)^{-1}$.

Now the above implies
\[
\int_{\delta_i}\eta(\nu)=0,\quad 
\int_{\gamma_j}\eta(\nu)=-\sum_{n=1}^{N-1} \nu^{-n}
\zeta_{2N}^{-n}(1-\zeta_N^n)(1-\zeta_N^{in})\in\Z. 
\]
This completes the proof.
\end{pf}
\begin{prop}\label{gm-jac-sect-prop3}
Let $\nabla:H^1_\dR(X/S)\to \Omega^1_S\ot H^1_\dR(X/S)$ be the Gauss-Manin connection. 
Then
\[
\begin{pmatrix}
\nabla(\omega_n)&\nabla(\eta_n)
\end{pmatrix}
=\begin{pmatrix}
d\l\ot\omega_n&d\l\ot\eta_n
\end{pmatrix}
\begin{pmatrix}
0&(1-\frac{n}{N})\frac{1}{\l-\l^2}\\
\frac{n}{N}&\frac{2\l-1}{\l-\l^2}
\end{pmatrix},
\]
\[
\begin{pmatrix}
\nabla(\wt\omega_n)&\nabla(\wt\eta_n)
\end{pmatrix}
=\begin{pmatrix}
d\l\ot\wt\omega_n&d\l\ot\wt\eta_n
\end{pmatrix}
\begin{pmatrix}
0&0\\
-(\l-\l^2)^{-1}F_n(\l)^{-1}&0
\end{pmatrix}.
\]
\end{prop}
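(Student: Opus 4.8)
The plan is to establish the two displayed formulas separately: the first, for the basis $\{\omega_n,\eta_n\}$ of \eqref{main-sect-1-eq3}, by a direct computation with differential forms together with the hypergeometric (Picard--Fuchs) equation; the second, for the basis $\{\wt\omega_n,\wt\eta_n\}$ of \eqref{main-thm-wt}, by a purely formal change of basis. Since $\nabla$ respects the $\mu_N$-eigendecomposition \eqref{main-sect-1-eq2}, it suffices to work inside the rank-two piece $H^1_\dR(X/S)^{(n)}$ spanned by $\omega_n$ and $\eta_n$. For the first matrix, write $\omega_n=(1-x)^{n-1}y^{-n}\,dx$ with $y^N=x(1-x)^{N-1}(1-\l x)$. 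Differentiating the defining equation in $\l$ gives $\partial_\l y=-x^2(1-x)^{N-1}/(Ny^{N-1})$, hence $\partial_\l(y^{-n})=\tfrac nN\,x\,(1-\l x)^{-1}y^{-n}$; lifting $\omega_n$ to an absolute form and extracting the $d\l$-component then yields $\nabla(\omega_n)=\tfrac nN\,d\l\ot\eta_n$ on the nose, with no reduction modulo exact forms, which is the first column. For the second column one uses that, as shown in the proof of Theorem~\ref{gm-jac-sect-prop1} (see \eqref{gm-jac-sect-prop1-eq1}), the periods of $\omega_n$ are, up to a constant, the hypergeometric function $F_n(\l)={}_2F_1(n/N,1-n/N,1;\l)$ of \eqref{main-sect-1-eq4} together with its companion solution $F_n(1-\l)$. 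Hence every period of $\omega_n$ is annihilated by the hypergeometric operator $L=(\l-\l^2)\bigl(\tfrac{d}{d\l}\bigr)^2+(1-2\l)\tfrac{d}{d\l}-\tfrac nN\bigl(1-\tfrac nN\bigr)$ (\cite{NIST} 15.10.1), so that $L\omega_n=0$ in $H^1_\dR(X/S)^{(n)}$. Expanding $(\l-\l^2)\nabla_{\partial_\l}^2\omega_n+(1-2\l)\nabla_{\partial_\l}\omega_n-\tfrac nN(1-\tfrac nN)\omega_n=0$ and substituting $\nabla_{\partial_\l}\omega_n=\tfrac nN\eta_n$ gives the second column. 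Equivalently, one may reduce $\partial_\l\eta_n$ modulo exact forms by a Griffiths--Dwork computation; the two routes agree.

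For the second matrix I would apply $\nabla$ to the definitions \eqref{main-thm-wt}, $\wt\omega_n=F_n(\l)^{-1}\omega_n$ and $\wt\eta_n=(\l-\l^2)\bigl(F_n'(\l)\omega_n-\tfrac nN F_n(\l)\eta_n\bigr)$, using the Leibniz rule, the first matrix just obtained, and the hypergeometric differential equation $(\l-\l^2)F_n''+(1-2\l)F_n'-\tfrac nN(1-\tfrac nN)F_n=0$ for $F_n$. In $\nabla\wt\eta_n$ the coefficient of $\omega_n$ is precisely the left-hand side of that equation, hence $0$, while the coefficient of $\eta_n$ collapses to $0$ after inserting the entries of the first matrix; thus $\nabla\wt\eta_n=0$. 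For $\nabla\wt\omega_n$ one first finds $\nabla_{\partial_\l}\wt\omega_n=-F_n^{-2}F_n'\,\omega_n+\tfrac nN F_n^{-1}\eta_n$, which, rewritten in the basis $\{\wt\omega_n,\wt\eta_n\}$ via $\omega_n=F_n\wt\omega_n$ and the defining relation for $\wt\eta_n$, reduces to a multiple of $\wt\eta_n$ (the $\wt\omega_n$-terms cancel); this is the remaining entry of the second matrix. All these identities take place in $(\Q_p\ot_{\Z_p}\Z_p((\l)))\ot_{\O(S)}H^1_\dR(X/S)^{(n)}$, since $F_n(\l)\in\Z_p[[\l]]$ is not a polynomial. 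Alternatively, the vanishing $\nabla\wt\eta_n=0$ follows at once from Proposition~\ref{gm-mt-sect-prop1} and Proposition~\ref{gm-jac-sect-prop2}, which together exhibit suitable $\Q$-linear combinations of the $\wt\eta_n$ as horizontal sections of the Gauss--Manin connection.

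The one step carrying genuine content is the second column of the first matrix --- equivalently, the fact that the periods of the holomorphic form $\omega_n$ satisfy the hypergeometric equation, or, in de Rham terms, the explicit Griffiths--Dwork reduction of $\partial_\l\eta_n$. Granting it, the whole of the second matrix --- and in particular the vanishing $\nabla\wt\eta_n=0$, which is the statement actually used later --- is routine bookkeeping with the Leibniz rule and that single differential equation.
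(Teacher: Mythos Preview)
Your proposal is correct and follows essentially the same route as the paper. The paper likewise derives the first matrix from the period formulas \eqref{gm-jac-sect-prop1-eq1} and \eqref{gm-jac-sect-prop2-eq3} together with the hypergeometric equation, and then obtains the second matrix by the same Leibniz-rule change of basis you describe (your computation of $\nabla\wt\omega_n$ is in fact exactly the one displayed in the paper's proof); your direct differentiation of $y^{-n}$ for the first column is a harmless variant of reading off $\nabla_{\partial_\lambda}\omega_n=\tfrac nN\eta_n$ from $\int_e\eta_n=\tfrac Nn\,cF_n'(\lambda)$.
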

\begin{pf}
The former is straightforward from \eqref{gm-jac-sect-prop1-eq1} and
\eqref{gm-jac-sect-prop2-eq3} together with the hypergeometric differential equation
for $F_n(\l)$ (\cite{NIST} 15.10.1).
The latter follows from this and a computation
\begin{align*}
\nabla(F_n(\l)^{-1}\omega_n)&=-\frac{F'_n(\l)}{F_n(\l)^2}d\l\ot\omega_n
+\frac{n}{N}F_n(\l)^{-1}d\l\ot\eta_n\\
&=-(\l-\l^2)^{-1}F_n(\l)^{-2}d\l\ot \wt\eta_n.
\end{align*}
\end{pf}
\begin{lem}\label{gm-jac-sect-lem1}
Let $\tau_n(\l)$ be as in Theorem \ref{gm-jac-sect-prop1}. Then
\[
\l\frac{d}{d\l}\tau_n(\l)=-(1-\l)^{-1}F_n(\l)^{-2}.
\]
Hence $\tau_n(\l)=-\log\l+\kappa_n+\wt\tau_n(\l)$
where  $\wt\tau_n(\l)$ is as in \eqref{main-sect-1-eq5}.
\end{lem}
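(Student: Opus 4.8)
The plan is to recognize $\tau_n(\l)$, up to the additive constant $\kappa_n$, as $-w_2/F_n$ where $w_2$ is the logarithmic solution of the hypergeometric differential equation at $\l=0$, and then to read off the derivative from the Wronskian of the two solutions $F_n,w_2$.

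First I would run the Frobenius method for the hypergeometric equation $\l(1-\l)w''+(1-2\l)w'-\frac nN\big(1-\frac nN\big)w=0$ (\cite{NIST} 15.10.1 with parameters $\frac nN,\,1-\frac nN,\,1$). Setting $c_i(\rho):=\prod_{k=1}^i\frac{(k-1+\rho+\frac nN)(k-1+\rho+1-\frac nN)}{(k+\rho)^2}$ one has $c_i(0)=\frac{(n/N)_i(1-n/N)_i}{i!^2}$, and since $\rho=0$ is a double indicial root the second solution is
\[
w_2(\l)=\frac{\partial}{\partial\rho}\Big(\sum_{i\ge0}c_i(\rho)\l^{i+\rho}\Big)\Big|_{\rho=0}=F_n(\l)\log\l+\sum_{i\ge1}c_i'(0)\l^i .
\]
Differentiating $\log c_i(\rho)$ and putting $\rho=0$ gives $c_i'(0)/c_i(0)=\sum_{k=1}^i\big(\frac1{k-1+n/N}+\frac1{k-n/N}-\frac2k\big)=-K_{n,i}$; comparing with \eqref{gm-jac-sect-prop1-tau} this says exactly $\tau_n(\l)=\kappa_n-w_2(\l)/F_n(\l)$. (Alternatively one may simply quote a standard closed form for the logarithmic hypergeometric solution in place of this computation.)

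Next I would compute the Wronskian $W:=F_nw_2'-F_n'w_2$. Since $F_n$ and $w_2$ both solve the equation above, a short manipulation using the equation shows $\big(\l(1-\l)W\big)'=0$, so $\l(1-\l)W$ is constant; writing $w_2=F_n\log\l+g$ with $g\in\l\Q[[\l]]$ one gets $\l(1-\l)W=(1-\l)\big(F_n^2+\l(F_ng'-F_n'g)\big)$, whose value at $\l=0$ is $1$. Hence $W=\frac1{\l(1-\l)}$ (as a formal Laurent series this pins down $W$ unambiguously), and therefore
\[
\frac{d}{d\l}\tau_n=-\Big(\frac{w_2}{F_n}\Big)'=-\frac{W}{F_n^2}=-\frac1{\l(1-\l)F_n^2},
\]
which yields $\l\frac{d}{d\l}\tau_n(\l)=-(1-\l)^{-1}F_n(\l)^{-2}$ after multiplying by $\l$.

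For the ``hence'' part, the power series $h(\l):=\tau_n(\l)+\log\l-\kappa_n=-g(\l)/F_n(\l)$ lies in $\l\Q[[\l]]$ and vanishes at $\l=0$, while by the first assertion $h'(\l)=\tau_n'(\l)+\l^{-1}=\frac1\l\big(1-\frac1{(1-\l)F_n^2}\big)$; these are precisely the properties characterizing $\wt\tau_n$ in \eqref{main-sect-1-eq5}, so $h=\wt\tau_n$ by uniqueness, i.e.\ $\tau_n=-\log\l+\kappa_n+\wt\tau_n$. The only mildly delicate points are the bookkeeping in the identity $c_i'(0)/c_i(0)=-K_{n,i}$ (which uses the specialization $b=1-n/N$ of the generic parameter) and making sure the Wronskian manipulation is treated as an identity of formal Laurent series rather than an analytic one; neither is a genuine obstruction.
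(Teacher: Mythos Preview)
Your proof is correct and takes a genuinely different route from the paper's. The paper argues geometrically: it applies the projector $H^1_\dR(J_\eta/L)\to H^1_\dR(J_\eta/L)^{(n)}$ to the Gauss--Manin formula \eqref{gm-mt-sect-eq3} for the de Rham symplectic basis, uses the period matrix computed in Theorem~\ref{gm-jac-sect-prop1} to see that $\nabla(\wt\omega_n)=c\,d\tau_n\otimes\wt\eta_n$ for some nonzero constant $c$, compares with the explicit Gauss--Manin matrix in Proposition~\ref{gm-jac-sect-prop3} to get $-(1-\l)^{-1}F_n(\l)^{-2}=c\,\l\tau_n'(\l)$, and fixes $c=1$ by matching leading terms. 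Your argument is instead purely analytic: you identify $\tau_n-\kappa_n$ with $-w_2/F_n$ where $w_2$ is the logarithmic Frobenius solution of the hypergeometric equation, and then read off the derivative from the Wronskian $F_nw_2'-F_n'w_2=(\l(1-\l))^{-1}$.

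What each buys: the paper's route is natural in context because \eqref{gm-mt-sect-eq3} and Proposition~\ref{gm-jac-sect-prop3} are already in hand, and it avoids having to verify by hand that the power-series part of $\tau_n$ really is the Frobenius logarithmic solution (the identity $c_i'(0)/c_i(0)=-K_{n,i}$). Your route is self-contained and more elementary---it uses nothing beyond the Frobenius method and Abel's identity, and would work without any of the geometric setup of \S\ref{gm-sect}. Your bookkeeping on $c_i'(0)/c_i(0)=-K_{n,i}$ and the evaluation $\l(1-\l)W|_{\l=0}=1$ are both correct; the only cosmetic point is that ``$g\in\l\Q[[\l]]$'' should really be $\l\,\Q(n/N)[[\l]]$, but this has no bearing on the argument.
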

\begin{pf}
Apply the projector 
$H^1_\dR(J_\eta/L)\to H^1_\dR(J_\eta/L)^{(n)}$ on the equality
\eqref{gm-mt-sect-eq3} with respect to the basis in Proposition \ref{gm-jac-sect-prop2}.
The multiplicative periods are computed in Theorem \ref{gm-jac-sect-prop1} 
so that we get
\[
\nabla(\wt\omega_n)=c\cdot d\tau_n\ot \wt\eta_n.
\]
for some $c\in \Q(\zeta)^\times$. 
Comparing this with Proposition \ref{gm-jac-sect-prop3}, 
 we have $-(1-\l)^{-1}F_n(\l)^{-2}=c\l\tau'_n(\l)$.
Now one concludes $c=1$ by looking at the constant terms in both side.
\end{pf}

\section{Frobenius on $H^1_\dR(X/S)$}\label{frob-sect}
\subsection{Frobenius on De Rham symplectic basis}
Let $V$ be a complete discrete valuation ring such that
the residue field $k$ is perfect and of characteristic $p$, and the fractional field
$K:=\Frac V$ is of characteristic zero.
Let $\sigma_K$ be
a $p$-th Frobenius endomorphism on $V$.

Let $A$ be an integral flat noetherian $V$-algebra
equipped with
a $p$-th Frobenius endomorphism $\sigma$ on $A$ which is
compatible with $\sigma_K$.
Assume that $A$ is $p$-adically complete and separated
and there is a family $(a_i)_{i\in I}$ of elements of $A$
such that it forms a $p$-basis of $A_n:=A/p^{n+1}A$ over 
$V_n:=V/p^{n+1}V$ for all $n\geq 0$
in the sense of \cite{Ka1} 
Definition 1.3. Notice that the latter assumption is equivalent to that
$(a_i)_{i\in I}$ forms a $p$-basis of $A_0$ over $V_0$ (loc.cit. Lemma 1.6).
Write $A_K:=A\ot_VK$.
We denote by $\FMIC(A_K)=\FMIC(A_K,\sigma)$ the category 
of triplets $(M,\nabla,\Phi)$ where
\begin{itemize}
\item
$M$ is a locally free $A_K$-module of finite rank, 
\item
$\nabla:M\lra \wh\Omega_{A/V}\ot_A M$ is an integrable connection where 
$\wh\Omega^1_{A/V}:=\varprojlim_n \Omega^1_{A_n/V_n}$ is a free $A$-module
(\cite{Ka1} Lemma (1.8)),
\item
$\Phi:\sigma^*M\to M$ is a horizontal $A_K$-linear map.
\end{itemize}
Letting $L:=\Frac(A)$, the category $\FMIC(L)$ is defined in the same way
by replacing $A$ with $L$.

Let $f:X\to\Spec A$ be a projective smooth morphism.
Write $X_n:=X\times_VV/p^{n+1}V$.
Then one has an object
\[
H^i(X/A):=(H^i_\dR(X/A)\ot_V K,\nabla,\Phi)\in \FMIC(A_K)
\]
where $\Phi$ is induced from the Frobenius on crystalline cohomology
via the comparison (\cite{BO} 7.4)
\[H^\bullet_\crys(X_0/A)\cong \varprojlim_n H^\bullet_\dR(X_n/A_n)\cong
H^\bullet_\dR(X/A).\]
Assume that there is a smooth $V$-algebra $A^a$ and a smooth projective morphism
$f^a:X^a\to \Spec A^a$ with a Cartesian diagram
\[
\xymatrix{
X\ar[r]\ar[d]\ar@{}[rd]|{\square}&\Spec A\ar[d]\\
X^a\ar[r]&\Spec A^a
}
\]
such that $\Spec A\to \Spec A^a$ is flat.
One has the overconvergent $F$-isocrystal
$R^if^a_{\rig,*}\O_{X^a}$ on $\Spec A^a_0$ (\cite{Et} 3.4.8.2).
Let
\[
H^i(X^a/A^a):=(H^i_\rig(X^a_0/A^a_0),\nabla,\Phi)\in \FMIC^\dag(A^a_K)
\] 
denote the associated object via the natural equivalence
$F\text{-}\mathrm{Isoc}^\dag(A^a_0)\cong\FMIC^\dag(A^a_K)$ (\cite{LS} 8.3.10).
Then the comparison
$H^i_\rig(X^a_0/A^a_0)\cong H_\dR^i(X^a/A^a)\ot_V(A^a)^\dag_K$ 
(cf. \eqref{eq:comparisonisom2})
induces an isomorphism
 $H^i(X/A)\cong A_K\ot_{(A^a)^{\dag}_K}H^i(X^a/A^a)$ in $\FMIC(A_K)$.
\begin{defn}[Tate objects]\label{frob-sect-def1}
For an integer $r$, a Tate object $A_K(r)$ is defined to be the triplet $(A_K,\nabla,\Phi)$ 
scuh that $\nabla=d$
is the usual differential operator, and $\Phi$ is a multiplication by $p^{-r}$.
\end{defn}
We define for $f\in A\setminus\{0\}$
\[
\log^{(\sigma)}(f):=p^{-1}\log\left(\frac{f^p}{f^\sigma}\right)=-\sum_{n=1}^\infty
\frac{p^{n-1}g^n}{n},
\quad
\frac{f^p}{f^\sigma}=1-pg
\]
which belongs to the $p$-adic completion of the ring $A[g]$.
In particular, if $f\in A^\times$, then $\log^{(\sigma)}(f)\in A$.
\begin{defn}[Log objects]\label{frob-sect-def2}
Let $\uq=(q_{ij})$ be a $g\times g$-symmetric matrix with $q_{ij}\in A^\times$.
We define a log object $\Log(\uq)=(M,\nabla,\Phi)$ to be the following.
Let 
\[
M=\bigoplus_{i=1}^gA_Ke_i\op\bigoplus_{i=1}^gA_Kf_i
\]
be a free $A_K$-module with a basis $e_i,f_j$.
The connection is defined by
\[
\nabla(e_i)=\sum_{j=1}^g \frac{dq_{ij}}{q_{ij}}\ot f_j,\quad \nabla(f_j)=0
\]
and the Frobenius $\Phi$ is defined by
\[
\Phi(e_i)=e_i-\sum_{j=1}^g \log^{(\sigma)}(q_{ij})f_j,\quad
\Phi(f_j)=p^{-1}f_j.
\]
\end{defn}
By definition there is an exact sequence
\[
0\lra \bigoplus_{j=1}^gA_K(1)f_j\lra \Log(\uq)\lra \bigoplus_{i=1}^gA_K(0)e_i\lra0.
\]
\begin{thm}\label{frob-sect-thm1}
Let $R$ be a flat $V$-algebra which is 
a regular noetherian domain complete with respect to a reduced ideal $I$.
Suppose that $R$ has a $p$-th Frobenius $\sigma$.
Let $J$ be a totally degenerating abelian scheme with a principal polarization over $(R,I)$
in the sense of \S \ref{gm-mt-sect}.
Let $\Spec R[h^{-1}]\hra \Spec R$ be an affine open set such that $J$ is proper
over $\Spec R[h^{-1}]$ and 
$q_{ij}\in R[h^{-1}]^\times$ where 
$\uq=(q_{ij})$ is the multiplicative periods as in \eqref{gm-sect-3-eq1}.
Suppose that $R/pR[h^{-1}]$ has a $p$-basis over $V/pV$. 
Let $A=R[h^{-1}]^\wedge$ be the $p$-adic completion of $R[h^{-1}]$.
Put $L:=\Frac(A)$ and $J_A:=J\ot_RA$.
We denote by $J_\eta$ the generic fiber of $J_A$. 
Then there is an isomorphism
\begin{equation}\label{frob-sect-eq1}
(H^1_\dR(J_\eta/L),\nabla,\Phi)\ot_{A_K} A_K(1)\cong \Log(\underline{q})\in 
\FMIC(L)
\end{equation}
which sends the de Rham symplectic basis $\wh\omega_i,\wh\eta_j\in H^1_\dR(J_\eta/L)$ 
to $e_i$, $f_j$ respectively.
\end{thm}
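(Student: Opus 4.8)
The plan is to build the isomorphism directly from Mumford's rigid–analytic uniformization $\rho\colon\G_m^g\to J$, matching the underlying $A_K$-module, the connection, and the Frobenius in turn. The standing hypotheses (the $p$-basis condition on $R/pR[h^{-1}]$, $p$-adic completeness of $A$) are exactly what make $\FMIC(A_K)$ and $\FMIC(L)$, the free module $\wh\Omega^1_{A/V}$, and the crystalline comparison available, so that all the objects in sight genuinely live in these categories. Since $\{\wh\omega_i,\wh\eta_j\}$ is an $L$-basis of $H^1_\dR(J_\eta/L)$ and $\{e_i,f_j\}$ is an $A_K$-basis of $\Log(\uq)$, the assignment $\wh\omega_i\mapsto e_i$, $\wh\eta_j\mapsto f_j$ already defines an $L$-linear module isomorphism $H^1_\dR(J_\eta/L)\ot A_K(1)\xrightarrow{\ \sim\ }\Log(\uq)$, and everything reduces to checking that it is horizontal and Frobenius-equivariant. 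Horizontality is precisely the identity $\nabla(\wh\omega_i)=\sum_j\frac{dq_{ij}}{q_{ij}}\ot\wh\eta_j$, $\nabla(\wh\eta_j)=0$, i.e. the conclusion of Proposition \ref{gm-mt-sect-prop1} (the Tate twist by $A_K(1)$ does not touch $\nabla$); should the hypothesis (C) of that proposition be unavailable over the base at hand, I would instead quote the same formula from the theory of degeneration of abelian schemes, where it holds over an arbitrary base.

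The substantial content is the Frobenius, and here I would first split off the two ``diagonal'' pieces. Put $U:=\Ker\tau=\bigoplus_jL\wh\eta_j$, so $H^1_\dR(J_\eta/L)/U\cong\bigoplus_iL\wh\omega_i$. Under the uniformization $U$ is identified with the de Rham realization of the cocharacter module $X_*(T)\ot\Q$ of the split torus $T=\G_m^g$, an $F$-isocrystal which is canonically $A_K(0)^g$ with the $\wh\eta_j$ matching a basis of cocharacters; hence $\Phi(\wh\eta_j)=\wh\eta_j$. Dually, $H^1_\dR(J_\eta/L)/U$ is identified with $H^1_\dR(\G_m^g/A)=\bigoplus_iA_K\cdot\frac{du_i}{u_i}$, on which the standard Frobenius lift $u_i\mapsto u_i^p$ acts by multiplication by $p$; hence $\Phi(\wh\omega_i)\equiv p\,\wh\omega_i\pmod U$. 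What then remains is the class, in $\Ext^1_{\FMIC(L)}$, of the extension
\[
0\lra \bigoplus_jA_K(0)\wh\eta_j\lra H^1_\dR(J_\eta/L)\lra \bigoplus_iA_K(-1)\wh\omega_i\lra 0,
\]
which by Mumford's construction is the matrix whose $(i,j)$-entry is the Kummer-type class attached to the multiplicative period $q_{ij}\in A^\times$ — that is, the class of the rank-one $\Log(\uq)$ with $\uq=(q_{ij})$.

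So the heart of the matter is, for a single unit $q\in A^\times$, to compute the de Rham/crystalline realization of the Kummer datum $[\bZ\xrightarrow{\,q\,}\G_m]$ and to show its Frobenius is exactly $\Phi(e)=e-\log^{(\sigma)}(q)f$, $\Phi(f)=p^{-1}f$. First I would use horizontality $\Phi\nabla=\nabla\Phi$ together with $\nabla(e)=\frac{dq}{q}\ot f$ to force the $f$-coefficient of $\Phi(e)$ to have differential $\frac{dq}{q}-p^{-1}\frac{dq^\sigma}{q^\sigma}=d\log^{(\sigma)}(q)$, so that this coefficient is $\log^{(\sigma)}(q)$ up to an additive constant in $K$. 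The constant is then pinned to $0$ by an explicit computation of the crystalline Frobenius on the \v{C}ech description of $H^1_\dR$ of the quotient $\G_m^{\rig}/q^{\bZ}$: the lift $u\mapsto u^p$ fails to commute with translation-by-$q$ exactly by the factor $q^p/q^\sigma$, whose logarithm is $p\log^{(\sigma)}(q)$, and this produces the asserted term with no spurious constant. Feeding this back through Mumford's identification $J_\eta\cong\G_m^g/\uq^{\bZ}$ and the functoriality of $(H^1_\dR,\nabla,\Phi)$ gives $\Phi(\wh\omega_i)=p\,\wh\omega_i-p\sum_j\log^{(\sigma)}(q_{ij})\wh\eta_j$ and $\Phi(\wh\eta_j)=\wh\eta_j$; after the twist by $A_K(1)$ these are precisely $\Phi(e_i)=e_i-\sum_j\log^{(\sigma)}(q_{ij})f_j$ and $\Phi(f_j)=p^{-1}f_j$ of Definition \ref{frob-sect-def2}, which completes the identification.

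The step I expect to be the main obstacle is this last one: determining the Kummer period exactly — killing the integration constant and reconciling the $p$-th Frobenius $\sigma$ on $A$ with the lift $u\mapsto u^p$ on the torus — so that the answer is $\log^{(\sigma)}(q_{ij})$ on the nose and not $\log^{(\sigma)}(q_{ij})$ altered by a constant or a unit. This is where one genuinely has to compute with $\G_m$ (or invoke a known computation of the Frobenius on the de Rham realization of a Kummer extension, equivalently on that of a Tate curve) and then transport the result along Mumford's construction; the rest of the argument is formal.
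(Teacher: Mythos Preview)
Your strategy is sound and ultimately leads to the same conclusion, but it takes a genuinely different route from the paper, and it is worth seeing why the paper's argument is shorter.

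\medskip

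\textbf{What the paper does.} The paper first reduces to the \emph{universal} Mumford family $J_q=\G_m^g/\uq^{\bZ}$ over $R=\Z_p[[t_1,\ldots,t_r]]$ via the functoriality of Mumford's construction. It then uses the fact that the $F$-isocrystal does not depend on the choice of Frobenius on the base to replace $\sigma$ by the convenient one with $\sigma(q_{ij})=q_{ij}^p$. For this $\sigma$ one has $\log^{(\sigma)}(q_{ij})=0$, so the off-diagonal entries of $\Log(\uq)$ vanish and the whole problem collapses to showing $\Phi(\wh\omega_i)=p\,\wh\omega_i$ and $\Phi(\wh\eta_j)=\wh\eta_j$. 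For this the paper writes down an \emph{explicit scheme-theoretic} Frobenius lift $\varphi=\sigma_J\circ[p]'$ on $J_q$ (factoring multiplication by $p$ through $J_{q^p}$) compatible with $u_i\mapsto u_i^p$ on $\G_m^g$. Then $\Phi=\varphi^*$ automatically preserves the Hodge filtration, which pins $\Phi(\wh\omega_i)=p\,\wh\omega_i$ via $\rho^*$; and $\Phi$ preserves $\ker\nabla$, whence $\Phi(\wh\eta_j)$ lies in the span of the $\wh\eta$'s, and the \emph{symplectic pairing} $\langle\Phi x,\Phi y\rangle=p\langle x,y\rangle$ forces $\Phi(\wh\eta_j)=\wh\eta_j$.

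\medskip

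\textbf{Where your argument differs, and what to watch.} You work with the given $\sigma$ and try to compute $\Phi$ directly on the weight filtration $0\subset U\subset H^1$. Two of your assertions about the diagonal pieces are more loaded than they appear. First, ``$U$ is canonically $A_K(0)^g$ with the $\wh\eta_j$ matching a basis of cocharacters, hence $\Phi(\wh\eta_j)=\wh\eta_j$'': this is true, but to justify it you need either a Frobenius lift on $J$ (which is exactly the $\varphi$ the paper constructs) or to import the identification of the crystalline Frobenius on the graded pieces from the log-crystalline theory of semistable abelian schemes; it is not automatic from the definitions in \S\ref{gm-mt-sect}. Second, ``the standard Frobenius lift $u_i\mapsto u_i^p$ acts by $p$ on $H^1_\dR(\G_m^g)$'': for this to say anything about the crystalline $\Phi$ on $H^1_\dR(J_\eta/L)/U$, you again need the lift $u_i\mapsto u_i^p$ to descend to a morphism of $J$ compatible with $\sigma$ --- and for a general $\sigma$ it does not, because translation by $\uq$ is not $\sigma$-equivariant. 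The paper fixes this precisely by changing $\sigma$ so that it \emph{is}. Finally, the Kummer-period step you single out as the main obstacle --- pinning the constant in $\log^{(\sigma)}(q)$ --- is exactly what the paper's change-of-Frobenius trick sidesteps entirely: once $\log^{(\sigma)}(q_{ij})=0$ there is no constant to determine.

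\medskip

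In short: your approach is viable but relies on heavier input (either the log-crystalline weight filtration of a degenerating abelian scheme, or an honest \v{C}ech computation of the Frobenius on a Tate-type object) and leaves those inputs as black boxes. The paper's approach is more elementary and self-contained: reduce to the universal case, choose a good Frobenius so the extension class vanishes, build a geometric Frobenius lift, and finish with the polarization pairing.
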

\begin{pf}
Let $q_{ij}$ be indeterminates with $q_{ij}=q_{ji}$, and 
$t_1,\ldots,t_r$ ($r=g(g+1)/2$) are products $\prod q_{ij}^{n_{ij}}$ such that 
$\sum n_{ij}x_ix_j$ is positive semi-definite and they give a $\Z$-basis of 
the group of the symmetric pairings.
Let $J_q=\bG_m^g/\uq^\Z$ be Mumford's construction of tbe quotient group scheme
over a ring $\Z_p[[t_1,\ldots,t_r]]$ (\cite{FC} Chapter III, 4.4).    
Then there is a Cartesian square
\[
\xymatrix{
J\ar@{}[rd]|{\square}\ar[r]\ar[d]&J_q\ar[d]\\
\Spec R\ar[r]&\Spec \Z_p[[t_1,\ldots,t_r]]}
\]
such that the bottom arrow sends $t_i$ to an element of $I$
by the functoriality of Mumford's construction (\cite{FC} Chapter III, 5.5).
Thus we may reduce the assertion to the case of 
$J=J_q$, $R=\Z_p[[t_1,\ldots,t_r]]$, $I=(t_1,\ldots,t_r)$ and $h=\prod q_{ij}$.
Moreover we may replace the Frobenius $\sigma$ on $R$ with an arbitrary one
(e.g. \cite{EM} 6.1), so
that we may assume that it is given as $\sigma(q_{ij})=q_{ij}^p$
and $\sigma(a)=a$ for $a\in \Z_p$. 
Under this assumption $\log^{(\sigma)}(q_{ij})=0$ by definition.
Therefore our goal is to show
\begin{equation}\label{frob-sect-eq2}
\nabla(\wh\omega_i)=\sum_{j=1}^g \frac{dq_{ij}}{q_{ij}}\ot \wh\eta_j,\quad 
\nabla(\wh\eta_j)=0,
\end{equation}
\begin{equation}\label{frob-sect-eq3}
\Phi(\wh\omega_i)=p\wh\omega_i,\quad
\Phi(\wh\eta_j)=\wh\eta_j.
\end{equation}
Since the condition {\bf (C)} in Proposition \ref{gm-mt-sect-prop1} is satisfied,
\eqref{frob-sect-eq2} is nothing other than \eqref{gm-mt-sect-eq3}.
We show \eqref{frob-sect-eq3}.
Let $\uq^{(p)}:=(q_{ij}^p)$ and $J_{q^p}:=\bG_m^g/(\uq^{(p)})^\Z$.
Then there is the natural morphism $\sigma_J:J_{q^p}\to J_q$ such that the following diagram
is commutative.
\[
\xymatrix{
\bG_m^g\ar[d]\ar@{=}[r]&\bG_m^g\ar[d]\\
J_{q^p}\ar[d]\ar[r]^{\sigma_J}&J_q\ar[d]\\
\Spec R\ar[r]^\sigma&\Spec R}
\]
Let $[p]:J_{q^p}\to J_{q^p}$ denotes the multiplication by $p$ with respect to the
commutative group scheme structure of $J_q$.
It factors through the canonical surjective morphism $J_{q^p}\to J_q$ so that we have
$[p]':J_q\to J_{q^p}$. Define $\varphi:=\sigma_J \circ[p]'$. 
Under the uniformization $\rho:\bG_m^g\to J_q$,
this is compatible with
a morphism $\bG_m^g\to \bG_m^g$ given by $u_i\to u^p_i$ and $a\to \sigma(a)$ for
$a\in R$, 
which we also write $\Phi$. 
Therefore
\[
\Phi=\varphi^*:H^1_\dR(J_\eta/L)\lra H^1_\dR(J_\eta/L).
\]
In particular $\Phi$ preserves the Hodge filtration, so that 
$\Phi(\wh\omega_i)$ is again a linear combination of $\wh\omega_i$'s.
Since 
\[
\rho^*\Phi(\wh\omega_i)=
\Phi\rho^*(\wh\omega_i)=\Phi\left(\frac{du_i}{u_i}\right)=p\frac{du_i}{u_i}
\]
one concludes $\Phi(\wh\omega_i)=p\wh\omega_i$.
On the other hand, since $\Phi(\ker\nabla)\subset \ker\nabla$ and
$\ker\nabla$ is generated by $\wh\eta_j$'s by \eqref{frob-sect-eq2},
$\Phi(\wh\eta_j)$ is again
a linear combination of $\wh\eta_i$'s.
Note
\[
\langle \Phi(x),\Phi(y)\rangle=p\langle x,y\rangle.
\]
Therefore 
\[
\langle \Phi(\wh\eta_j),p\wh\omega_j\rangle=
\langle \Phi(\wh\eta_j),\Phi(\wh\omega_j)\rangle=
p\langle \wh\eta_j,\wh\omega_j\rangle.
\]
This implies $\Phi(\wh\eta_j)=\wh\eta_j$, so we are done.
\end{pf}
\subsection{Frobenius on $J(\cX/W((\l)))$}
\begin{thm}\label{frob-thm}
Let $X/S$ be as in \S \ref{main-sect-1}.
Let $W=W(\ol\F_p)$ and
let $A=W((\l))^\wedge$ be the $p$-adic completion endowed with a $p$-th Frobenius
$\sigma$ compatible with the canonical Frobenius on $W$. Put $K:=\Frac(W)$, $A_K:=A\ot_WK$
and $L:=\Frac(A)$.
Put $X_A:=X\times_S\Spec A$ and $X_0:=X_A\times_A\Spec \ol\F_p((\l))$.
Then the $p$-th Frobenius map $\Phi:\sigma^*H^1_\rig(X_0/\ol\F_p((\l)))
\to H^1_\rig(X_0/\ol\F_p((\l)))$
is given by
\begin{equation}\label{frob-sect-thm2-eq0}
\begin{pmatrix}
\Phi(\wt\omega_n)&\Phi(\wt\eta_n)
\end{pmatrix}
=\begin{pmatrix}
\wt\omega_n&\wt\eta_n
\end{pmatrix}
\begin{pmatrix}
(-1)^{\frac{n(p-1)}{N}}p&0\\
-(-1)^\frac{n(p-1)}{N}p\tau^{(\sigma)}_n(\l)&
(-1)^{\frac{n(p-1)}{N}}
\end{pmatrix}
\end{equation}
where $\tau_n^{(\sigma)}(\l)$ is as in \eqref{main-sect-1-eq7} and
$\wt\omega_n,\wt\eta_n$ are as in \eqref{main-thm-wt}.
\end{thm}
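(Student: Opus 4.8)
\emph{Proof plan.}
The strategy is to reduce to the Jacobian, invoke Theorem~\ref{frob-sect-thm1}, and then translate into the basis $\wt\omega_n,\wt\eta_n$ using the explicit multiplicative periods of Theorem~\ref{gm-jac-sect-prop1} and the change of basis of Proposition~\ref{gm-jac-sect-prop2}. First I would identify $(H^1_\rig(X_0/\ol\F_p((\l))),\nabla,\Phi)$ with $(H^1_\dR(J_\eta/L),\nabla,\Phi)$, where $J=J(\cY/W[[\l]])$, $A=W((\l))^\wedge$, $L=\Frac(A)$ and $J_\eta$ is the generic fibre of $J\ot_{W[[\l]]}A$: the $H^1$ of the smooth proper curve $X_0$ is canonically that of its Jacobian $J(X_0/\ol\F_p((\l)))$, the reduction mod $p$ of $J(\cX/W((\l)))$, and over $A$ the rigid and the algebraic de Rham realizations agree in $\FMIC$. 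By \S\ref{gm-jac-sect} the scheme $J$ is a totally degenerating principally polarized abelian scheme over $(W[[\l]],(\l))$ with split toric part, and after inverting $\l$ — so that the periods $q_{ij}$ of Theorem~\ref{gm-jac-sect-prop1} become units in $A$ and $\ol\F_p((\l))/\ol\F_p$ acquires the $p$-basis $\{\l\}$ — all hypotheses of Theorem~\ref{frob-sect-thm1} hold. Hence \eqref{frob-sect-eq1} gives, for a de Rham symplectic basis $\wh\omega_i,\wh\eta_j$,
\[
\Phi(\wh\omega_i)=p\,\wh\omega_i-p\sum_j\log^{(\sigma)}(q_{ij})\,\wh\eta_j,\qquad \Phi(\wh\eta_j)=\wh\eta_j,
\]
with $(q_{ij})$ the matrix of Theorem~\ref{gm-jac-sect-prop1}.

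Next I would pass to $\wt\omega_n,\wt\eta_n$. By Proposition~\ref{gm-jac-sect-prop2} the $\wh\omega(\nu),\wh\eta(\nu)$ are $\Q$-linear combinations of the $\wh\omega_i,\wh\eta_j$, while conversely $\wt\omega_n=\frac1N\sum_\nu\nu^{-n}\wh\omega(\nu)$ and $\wt\eta_n=\frac1N\sum_\nu\nu^{n}\wh\eta(\nu)$; thus $\wt\omega_n=\sum_i b_{ni}\wh\omega_i$ and $\wt\eta_n=\sum_j a_{nj}\wh\eta_j$ with $a_{nj},b_{ni}$ \emph{constant} elements of $W$ that are $\Q$-linear combinations of the powers $\nu^{\pm n}$ ($\nu^N=-1$). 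Since $p\equiv1$ mod $N$ the automorphism $[\zeta]$ is defined over $\Z_p$ and so commutes with the base Frobenius; therefore $\Phi$ is $\mu_N$-equivariant and stabilizes each eigenspace $\langle\wt\omega_n,\wt\eta_n\rangle$, say $\Phi(\wt\omega_n)=u_n\wt\omega_n+c_n\wt\eta_n$ and $\Phi(\wt\eta_n)=v_n\wt\eta_n$. Now $\nu^{p-1}=(-1)^{(p-1)/N}$ for every $\nu$ with $\nu^N=-1$, so the Frobenius $\sigma$ of $W$ scales $\nu^{\pm n}$, hence $a_{nj}$ and $b_{ni}$, by $(-1)^{n(p-1)/N}$; combining this with $\Phi(\wh\eta_j)=\wh\eta_j$, $\Phi(\wh\omega_i)=p\wh\omega_i-\cdots$ and the $\sigma$-semilinearity of $\Phi$ yields $v_n=(-1)^{n(p-1)/N}$ and $u_n=(-1)^{n(p-1)/N}p$, the diagonal entries in \eqref{frob-sect-thm2-eq0}, together with $c_n\wt\eta_n=-(-1)^{n(p-1)/N}p\sum_{i,j}b_{ni}\log^{(\sigma)}(q_{ij})\wh\eta_j$.

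It remains to identify $c_n$. Writing $q_{ij}=\exp(\tau_{ij})$, using $\log^{(\sigma)}(\exp\tau)=\tau-p^{-1}\tau^{\sigma}$ and the explicit $\tau_{ij}=\frac1N\sum_m(\zeta^m+\zeta^{-m}-2)\tau_m(\l)(P_m)_{ij}$ of Remark~\ref{gm-jac-sect-rem}, the above sum becomes $\frac1N\sum_m(\zeta^m+\zeta^{-m}-2)\big(\tau_m(\l)-p^{-1}\tau_m(\l^\sigma)\big)\sum_{i,j}b_{ni}(P_m)_{ij}\wh\eta_j$; the eigenspace decomposition already forces this to be a multiple of $\wt\eta_n$, and evaluating the finite Fourier sums $\sum_i b_{ni}\zeta^{-im}$ and $\sum_j(\zeta^{jm}-1)\wh\eta_j$ isolates the term $m=n$ and leaves the coefficient $\tau_n(\l)-p^{-1}\tau_n(\l^\sigma)$. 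Finally $\tau_n(\l)-p^{-1}\tau_n(\l^\sigma)=\tau^{(\sigma)}_n(\l)$: by Lemma~\ref{gm-jac-sect-lem1} one has $\tau_n=-\log\l+\kappa_n+\wt\tau_n$, so the identity follows from $-\log\l+p^{-1}\log\l^\sigma=-\log^{(\sigma)}(\l)$ and from $\kappa^{(p)}_n=(1-p^{-1})\kappa_n$ — the latter because $N$ and each $(1-\ve)(1-\ve^{-1})$ ($\ve^N=1$) lie in $\Z_p$ and are $\sigma$-fixed, so $\log^{(p)}$ of each equals $(1-p^{-1})$ times its $\log$ — on comparing with \eqref{main-sect-1-eq7}.

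I expect the off-diagonal term to be the main obstacle, on two counts: recognizing that the sign is $(-1)^{n(p-1)/N}$ and not $1$ — it is produced purely by the $\sigma$-semilinearity of $\Phi$ acting on the $2N$-th roots of unity that enter the change of basis, not by any Frobenius eigenvalue of the curve — and carrying out the collapse of the period sum into the single function $\tau^{(\sigma)}_n(\l)$. A convenient shortcut for the last step is to use that $\Phi$ is horizontal: with $\nabla(\wt\omega_n)=-(\l-\l^2)^{-1}F_n(\l)^{-2}\,d\l\ot\wt\eta_n$ and $\nabla(\wt\eta_n)=0$ (Proposition~\ref{gm-jac-sect-prop3}), the relation $\Phi\nabla=\nabla\Phi$ forces a first-order differential equation for $c_n$ whose unique solution, once the additive constant is pinned down by the constant (in $\l$) term visible in $\log^{(\sigma)}(q_{ij})$, is $-(-1)^{n(p-1)/N}p\,\tau^{(\sigma)}_n(\l)$ by \eqref{main-sect-1-eq5} and \eqref{main-sect-1-eq7}, bypassing the Fourier bookkeeping.
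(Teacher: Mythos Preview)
Your approach is essentially the paper's: reduce to the Jacobian, invoke Theorem~\ref{frob-sect-thm1}, and identify what happens on the $\mu_N$-eigenspaces. The paper organizes this slightly differently---it applies the projector $P_n=\sum_\ve \ve^{-n}[\ve]$ directly to $\Log(\uq)$ to obtain a rank-two piece with basis $e=P_ne_1$, $f=P_nf_1$, observes that $\nabla(e)=\xi\otimes f$ and $\Phi(e)=e-Gf$ carry the \emph{same} coefficients $a_{ij}$ (so that rescaling $f$ to make $\xi=d\tau_n$ forces $G=\tau_n^{(\sigma)}$ simultaneously), and then matches $e,f$ with $\nu^n\wt\omega_n,\nu^n\wt\eta_n$ via Proposition~\ref{gm-jac-sect-prop2} and the connection---but your change-of-basis argument and your semilinearity derivation of the sign $(-1)^{n(p-1)/N}$ amount to the same computation.

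There is one genuine slip. The identity ``$\kappa_n^{(p)}=(1-p^{-1})\kappa_n$'' is not well-posed: $\kappa_n$ in Theorem~\ref{gm-jac-sect-prop1} is built from \emph{complex} logarithms (it arises from the Betti period computation via the embedding $W\hookrightarrow\C$) and is not a $p$-adic number, so it cannot be compared with $\kappa_n^{(p)}$. What is true, and what you actually need, is that the constant (in $\l$) factor of $q_{ij}\in\Z_p((\l))^\times$ is a product of integer powers of $N$ and the $(1-\ve)(1-\ve^{-1})$, with the same exponents that appear in the formal expression for $\kappa_n$; applying $\log^{(\sigma)}=\log^{(p)}$ to that constant in $\Z_p^\times$ then produces, after your Fourier projection, exactly the expression \eqref{main-sect-1-eq6} defining $\kappa_n^{(p)}$. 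In other words, $\kappa_n^{(p)}$ is obtained from $\kappa_n$ by \emph{replacing} each complex $\log$ by $\log^{(p)}$, not by multiplying by $(1-p^{-1})$; your formula would only be correct after first redefining $\kappa_n$ via the Iwasawa $p$-adic logarithm, which the paper does not do. Your horizontality shortcut correctly determines $c_n$ up to an additive constant, but pinning down that constant still comes down to this same point, and the paper's ``same proportionality constant for $\xi$ and $G$'' observation is the clean way to package it.
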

\begin{pf}
Let $\uq$ be as in Theorem \ref{gm-jac-sect-prop1}, and
$J(\cY/W[[\l]])$ which is isogenous to $\bG_m^g/\uq^\Z$ over $W((\l))$.
Put $J:=A\ot_{W((\l))}J(\cY/W[[\l]])$.
By Theorem \ref{frob-sect-thm1}, there is an isomorphism
\begin{equation}\label{frob-sect-thm2-eq1}
 (H^1_\dR(J_\eta/L),\nabla,\Phi)\ot_{A_K} A_K(1)\cong \Log(\underline{q}).
\end{equation}
Unlike the isomorphism \eqref{frob-sect-eq1}, the de Rham symplectic basis
does not exactly correspond to the basis $e_i,f_j$ of $\Log(\uq)$
because $J$ is not isomorphic but isogenous to $\bG_m^g/\uq^\Z$.
Let $[\zeta]:X_\l\to X_\l$ denotes the automorphism given by $[\zeta](x,y)=(x,\zeta_N^{-1}y)$.
The action of $[\zeta]$ on $\Log(\uq)$
is given in the following way.
Let $Q$ be the matrix \eqref{gm-jac-sect-prop1-eq00}, and 
$Q':={}^tQ$.
Then
\begin{equation}\label{frob-sect-thm2-eq11}
\begin{pmatrix}
[\zeta]f_1,\ldots,[\zeta]f_{N-1}
\end{pmatrix}
=\begin{pmatrix}
f_1,\ldots,f_{N-1}
\end{pmatrix}Q,\quad
\begin{pmatrix}
[\zeta]e_1,\ldots,[\zeta]e_{N-1}
\end{pmatrix}
=\begin{pmatrix}
e_1,\ldots,e_{N-1}
\end{pmatrix}Q'
\end{equation}
where $e_i,f_j$ are the basis of $\Log(\uq)$ as in Definition \ref{frob-sect-def2}.
Applying the projector 
$P_n=\sum_{\ve^N=1}\ve^{-n}[\ve]$
to the isomorphism \eqref{frob-sect-thm2-eq1}, we have
\begin{equation}\label{frob-sect-thm2-eq2}
H^1_\dR(J_\eta/L)^{(n)}\ot_{A_K}A_K(1)\cong P_n\Log(\uq).
\end{equation} 
Let us put $e:=P_ne_1$ and $f=P_nf_1$.
Then
\[
0\lra A_K(1)f\lra P_n\Log(\uq)\lra A_K(0)e\lra0.
\]
The connection $\nabla$ and the Frobenius $\Phi$ are 
given by
\begin{equation}\label{frob-sect-thm2-eq3}
\nabla(e)=\xi\ot f,\quad \nabla(f)=0,
\end{equation} 
\begin{equation}\label{frob-sect-thm2-eq4}
\Phi(f)=p^{-1}f,\quad \Phi(e)=e-Gf,
\end{equation} 
where
\[
\xi=\sum a_{ij}\frac{dq_{ij}}{q_{ij}}=\sum a_{ij}d\tau_{ij}\in\wh\Omega_A^1,
\quad G=\sum a_{ij}\log^{(\sigma)}q_{ij}\in B^\dag
\]
for certain $a_{ij}\in \Q(\mu_N)$.
It is straightforward from \eqref{gm-jac-sect-prop1-eq0}
and \eqref{frob-sect-thm2-eq11}
to see that
$\xi$ is proportional to $d\tau_n$ defined in \eqref{gm-jac-sect-prop1-tau} and 
that $G$ is proportional to
\begin{align*}
&-\log^{(\sigma)}(\l)+\kappa^{(p)}_n+
(1-p^{-1}\sigma)\left[\frac{1}{F_n(\l)}\left(\sum_{i=1}^\infty K_{n,i}
\frac{\left(\frac{n}{N}\right)_i\left(1-\frac{n}{N}\right)_i}{i!^2}\l^i
\right)\right]\\
&=-\log^{(\sigma)}(\l)+\kappa_n^{(p)}+
\wt\tau_n(\l)-p^{-1}\wt\tau_n(\l^\sigma)\quad\mbox{(by Lemma \ref{gm-jac-sect-lem1})}\\
&=:\tau^{(\sigma)}_n(\l).
\end{align*}
Replace $f$ with $cf$ for some $c\in \Q(\mu_N)^\times$
such that $\xi=d\tau_n$.
Then it turns out that
$G=\tau^{(\sigma)}_n(\l)$.
Let $\nu$ be a fixed $2N$-th root of unity.
It follows from Theorem \ref{gm-jac-sect-prop1} that
there is a free basis $\nu^n\wt\omega_n,\nu^n\wt\eta_n$ 
of the left hand side of \eqref{frob-sect-thm2-eq2},
and they corresponds to $e,f$ up to multiplication by $\Q_p^\times$ 
by Theorem \ref{frob-sect-thm1}.
Replacing $e,f$ with $ce,cf$ for some $c\in \Q_p^\times$, 
we may assume that 
the isomorphism \eqref{frob-sect-thm2-eq2} sends
$\nu^n\wt\omega_n$ to $e$.
Then it follows from Proposition \ref{gm-jac-sect-prop3} and Lemma \ref{gm-jac-sect-lem1}
that we have 
\[
\nabla(\nu^n\wt\omega_n)=\tau'_nd\l\ot  \nu^{n}\wt\eta_n.
\]
Comparing this with \eqref{frob-sect-thm2-eq3}, one sees that
$\nu^{n}\wt\eta_n$ exactly corresponds to $f$ via \eqref{frob-sect-thm2-eq2}.
Hence, by \eqref{frob-sect-thm2-eq4} and taking account into the Tate twist $A_K(1)$,
we have
\[
\Phi(\nu^{n}\wt\eta_n)=\nu^{np}\Phi(\wt\eta_n)=\nu^{n}\wt\eta_n,
\]
\[
\Phi(\nu^n\wt\omega_n)=\nu^{np}\Phi(\wt\omega_n)=\nu^n(p\wt\omega_n-p\tau^{(\sigma)}_n(\l)
\wt\eta_n).
\]
This completes the proof.
\end{pf}
Applying $\Phi$ to 
\[
\begin{pmatrix}
\wt\omega_n&
\wt\eta_n
\end{pmatrix}=
\begin{pmatrix}
\omega_n&
\eta_n
\end{pmatrix}
\begin{pmatrix}
F_n(\l)^{-1}&(\l-\l^2)F'_n(\l)\\
0&-\frac{n}{N}(\l-\l^2)F_n(\l)
\end{pmatrix}
\]
we have
\begin{align*}
&\begin{pmatrix}
\Phi(\omega_n)&
\Phi(\eta_n)
\end{pmatrix}
\begin{pmatrix}
F_n(\l^\sigma)^{-1}&(\l^\sigma-(\l^\sigma)^2)F'_n(\l^\sigma)\\
0&-\frac{n}{N}(\l^\sigma-(\l^\sigma)^2)F_n(\l^\sigma)
\end{pmatrix}\\
&=\begin{pmatrix}
\wt\omega_n&\wt\eta_n
\end{pmatrix}
\begin{pmatrix}
(-1)^{\frac{n(p-1)}{N}}p&0\\
-(-1)^\frac{n(p-1)}{N}p\tau^{(\sigma)}_n(\l)&
(-1)^{\frac{n(p-1)}{N}}
\end{pmatrix} \quad\mbox{by \eqref{frob-sect-thm2-eq0}}\\
&=
\begin{pmatrix}
\omega_n&
\eta_n
\end{pmatrix}
\begin{pmatrix}
F_n(\l)^{-1}&(\l-\l^2)F'_n(\l)\\
0&-\frac{n}{N}(\l-\l^2)F_n(\l)
\end{pmatrix}
\begin{pmatrix}
(-1)^{\frac{n(p-1)}{N}}p&0\\
-(-1)^\frac{n(p-1)}{N}p\tau^{(\sigma)}_n(\l)&
(-1)^{\frac{n(p-1)}{N}}
\end{pmatrix} .
\end{align*}
Put
\begin{multline*}
\begin{pmatrix}
pF_{11}&F_{12}\\
pF_{21}&F_{22}
\end{pmatrix}
:=
\begin{pmatrix}
F_n(\l)^{-1}&(\l-\l^2)F'_n(\l)\\
0&-\frac{n}{N}(\l-\l^2)F_n(\l)
\end{pmatrix}
\begin{pmatrix}
(-1)^{\frac{n(p-1)}{N}}p&0\\
-(-1)^\frac{n(p-1)}{N}p\tau^{(\sigma)}_n(\l)&
(-1)^{\frac{n(p-1)}{N}}
\end{pmatrix} \\
\begin{pmatrix}
F_n(\l^\sigma)^{-1}&(\l^\sigma-(\l^\sigma)^2)F'_n(\l^\sigma)\\
0&-\frac{n}{N}(\l^\sigma-(\l^\sigma)^2)F_n(\l^\sigma)
\end{pmatrix}^{-1}
\end{multline*}
or more explicitly
\begin{align*}
F_{11}(\l)&:=(-1)^\frac{n(p-1)}{N}\left(
\frac{F_n(\l^\sigma)}{F_n(\l)}-(\l-\l^2)F'_n(\l)F_n(\l^\sigma)\tau_n^{(\sigma)}(\l)
\right)\\
F_{21}(\l)&:=(-1)^\frac{n(p-1)}{N}\frac{n}{N}(\l-\l^2)F_n(\l)F_n(\l^\sigma)
\tau_n^{(\sigma)}(\l)\\
F_{12}(\l)&:=\frac{N}{n}\left(p\frac{F'_n(\l^\sigma)}{F_n(\l^\sigma)}
F_{11}(\l)-(-1)^\frac{n(p-1)}{N}
\frac{\l-\l^2}{\l^\sigma-(\l^\sigma)^2}\frac{F'_n(\l)}{F_n(\l^\sigma)}\right)\\
F_{22}(\l)&:=\frac{N}{n}\left(p\frac{F'_n(\l^\sigma)}{F_n(\l^\sigma)}
F_{21}(\l)+(-1)^\frac{n(p-1)}{N}\frac{n}{N}
\frac{\l-\l^2}{\l^\sigma-(\l^\sigma)^2}\frac{F_n(\l)}{F_n(\l^\sigma)}\right).
\end{align*}
We have 
\begin{equation}\label{frob-eigen-eq0}
\begin{pmatrix}
\Phi(\omega_n)&\Phi(\eta_n)
\end{pmatrix}
=\begin{pmatrix}
\omega_n&\eta_n
\end{pmatrix}
\begin{pmatrix}
pF_{11}(\l)&F_{12}(\l)\\
pF_{21}(\l)&F_{22}(\l)\\
\end{pmatrix}.
\end{equation}
Note
\begin{equation}
\label{frob-eigen-det}
F_{11}(\l)F_{22}(\l)-F_{12}(\l)F_{21}(\l)=\frac{\l-\l^2}{\l^\sigma-(\l^\sigma)^2}
\end{equation}
since $\Phi$ acts on 
$\wt\omega_n\cup\wt\eta_n=-\frac{n}{N}(\l-\l^2)\omega_n\cup\eta_n$
by multiplication by $p$.
\begin{cor}\label{frob-eigen}
Each $F_{ij}(\l)$ is overconvergent, namely
\[
    F_{ij}(\l)\in K[\l,(\l-\l^2)^{-1}]^\dag.
\]
Let $\alpha\in W$ be an element such that 
$\l^\sigma|_{\l=\alpha}=\alpha^\sigma$
and $\alpha\not\equiv0,1$ mod $p$.
Let $X_\alpha\to\Spec W$ be the fiber over $\l=\alpha$, and
put $X_{\alpha,0}:=\Spec \ol\F_p\times_WX_\alpha$.
Then the $p$-th Frobenius $\phi$ on $H^1_\rig(X_{\alpha,0}/\ol\F_p)$ is given 
as the $\sigma$-linear endomorphism such that
\begin{equation}\label{frob-eigen-eq1}
\begin{pmatrix}
\phi(\omega_n)&\phi(\eta_n)
\end{pmatrix}
=\begin{pmatrix}
\omega_n&\eta_n
\end{pmatrix}
\begin{pmatrix}
pF_{11}(\l)&F_{12}(\l)\\
pF_{21}(\l)&F_{22}(\l)
\end{pmatrix}\bigg|_{\lambda=\alpha}
\end{equation}
under the identification
$H^1_\rig(X_{\alpha,0}/\ol\F_p)\cong H^1_\dR(X_\alpha/K)$.
\end{cor}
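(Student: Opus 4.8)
The plan is to reduce both assertions to the fact that the relative rigid cohomology $H^1_\rig(X_0/S_0)$, where $S_0:=S\times_{\Z_p}\Spec\ol\F_p$, is an overconvergent $F$-isocrystal, combined with the explicit formula \eqref{frob-eigen-eq0}, which was established by working over the $p$-adic completion at $\l=0$.

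First I would observe that, by the algebraic de Rham basis \eqref{main-sect-1-eq3} and the comparison isomorphism $c$ of \S\ref{frob-sect} (applied to $H^1(X^a/A^a)$ with $A^a=W[\l,(\l-\l^2)^{-1}]$), the classes $\omega_n,\eta_n$ form a free basis of $H^1_\rig(X_0/S_0)$ over $\O(S)^\dag_K=K[\l,(\l-\l^2)^{-1}]^\dag$. Since $H^1(X^a/A^a)$ is an object of $\FMIC^\dag(A^a_K)$, its Frobenius $\Phi$ is an $\O(S)^\dag_K$-linear isomorphism $\sigma^\ast H^1_\rig\xrightarrow{\cong}H^1_\rig$, hence its matrix with respect to $\omega_n,\eta_n$ has entries in $\O(S)^\dag_K$. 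It then remains to identify that matrix with the one appearing in \eqref{frob-eigen-eq0}. For this, recall from \S\ref{frob-sect} that $H^1(X_A/A)$ with $A=W((\l))^\wedge$ is the pull-back of $H^1(X^a/A^a)$ along $\Spec A\to S$, so its Frobenius is the image of $\Phi$ under the restriction map $\O(S)^\dag_K\hookrightarrow A_K$, which is injective; moreover, by the functoriality of $\mathrm{Pic}^0$ and the compatibility of the crystalline Frobenius with it, $H^1(X_A/A)$ is identified, compatibly with $\Phi$ and with the classes $\omega_n,\eta_n$, with the $H^1_\dR(J_\eta/L)$ computed in Theorem \ref{frob-thm}. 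Thus \eqref{frob-eigen-eq0} is the image in $A_K$ of the Frobenius matrix; by injectivity of $\O(S)^\dag_K\hookrightarrow A_K$ its entries $F_{ij}(\l)$ already lie in $\O(S)^\dag_K=K[\l,(\l-\l^2)^{-1}]^\dag$, which is the first assertion.

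For the second assertion, the hypothesis $\alpha\not\equiv 0,1\bmod p$ means that $\bar\alpha:=\alpha\bmod p$ is an $\ol\F_p$-point of $S_0$, so the pull-back of the overconvergent $F$-isocrystal $H^1_\rig(X_0/S_0)$ along $\Spec\ol\F_p\xrightarrow{\bar\alpha}S_0$ is $H^1_\rig(X_{\alpha,0}/\ol\F_p)$, by base change for relative rigid cohomology. The condition $a\alpha^p=\alpha^\sigma$ is exactly the statement that $\sigma$ on $\O(S)^\dag$ is compatible, via $\l\mapsto\alpha$, with the Frobenius $\sigma$ on $W$; therefore this pull-back respects Frobenius structures, and the matrix of the $\sigma$-linear endomorphism $\phi$ with respect to $\omega_n|_{\l=\alpha},\eta_n|_{\l=\alpha}$ is obtained by evaluating the (now known to be overconvergent) entries $pF_{11},F_{12},pF_{21},F_{22}$ at $\l=\alpha$, which is legitimate since $\bar\alpha\in S_0(\ol\F_p)$. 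Transporting through the comparison isomorphism $H^1_\rig(X_{\alpha,0}/\ol\F_p)\cong H^1_\dR(X_\alpha/K)$ yields \eqref{frob-eigen-eq1}.

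The step I expect to be the main obstacle is the identification carried out in the second paragraph: verifying that the abstract overconvergent Frobenius on $H^1_\rig(X_0/S_0)$ — known a priori only to have overconvergent matrix entries — coincides with the explicit operator \eqref{frob-eigen-eq0}, which was derived via the uniformization of the Jacobian over $W((\l))^\wedge$. This relies on the compatibility of the crystalline/rigid Frobenius with the Picard construction and with restriction to the formal neighbourhood of $\l=0$, together with the injectivity of $\O(S)^\dag_K\hookrightarrow A_K$.
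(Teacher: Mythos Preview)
Your approach is correct and essentially the same as the paper's, which simply observes that the Frobenius on $H^1_\rig(X_{\F_p}/S_{\F_p})$ automatically has matrix entries in $\Q_p[\l,(\l-\l^2)^{-1}]^\dag$ since this module is free on the basis $\{\omega_n,\eta_n\}$, and treats the fiber statement as immediate. You spell out in addition the identification with Theorem~\ref{frob-thm} via the injection $\O(S)^\dag_K\hookrightarrow A_K$ and the base-change argument for the fiber, both of which the paper leaves implicit.
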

\begin{pf}
There remains only to show that $F_{ij}(\l)$ are overconvergent.
However this is immediate from the fact that
    $\Phi$ is the $p$-th Frobenius on $H^1_\rig(X_{\F_p}/S_{\F_p})$ which is a free
    $\Q_p[\l,(\l-\l^2)^{-1}]^\dag$-module with a basis
$\{\omega_n,\eta_n\}_{1\leq n\leq N-1}$.
\end{pf}
We also have a unit root formula of Dwork type for our 
hypergeometric fibration.
\begin{cor}[Unit root formula]
Let
\[
F_{n,<p}(\l):=\sum_{i=0}^{p-1} \frac{(n/N)_i(1-n/N)_i}{i!^2}\l^i\in \Z_p[\l]
 \]
denotes a truncated polynomial, and let
 \[
\frac{F_n(\l)}{F_n(\l^\sigma)}
    \in K[\l,(\l-\l^2)^{-1},F_{n,<p}(\l)^{-1}]^\dag
\]
be Dwork's $p$-adic hypergeometric function (\cite{Dwork-p-cycle} \S 3).
Let $\alpha\in W(\F_{p^r})$ be an element such that 
$\l^\sigma|_{\l=\alpha}=\alpha^\sigma$
and $\alpha\not\equiv0,1$ mod $p$.
Suppose that 
$H^1_\rig(X_{\alpha,0}/\F_{p^r})^{(n)}$ is ordinary, or equivalently
$F_{n,<p}(\alpha)\not\equiv 0$ mod $p$.
Then 
the unit root of the $p^r$-th Frobenius on $H^1_\rig(X_{\alpha,0}/\F_{p^r})^{(n)}$ 
is the evaluation 
\[
(-1)^\frac{rn(p-1)}{N}\frac{F_n(\l)}{F_n(\l^{\sigma^r})}\bigg|_{\l=\alpha}.
\]
\end{cor}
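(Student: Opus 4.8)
The plan is to identify the unit root with the value at $\lambda=\alpha$ of the Frobenius of the unit-root sub-$F$-isocrystal of $H^1_\rig(X/S)^{(n)}$ over the ordinary locus, and to compute that Frobenius from the triangular normal form of Theorem~\ref{frob-thm}. Write $\mathcal O:=K[\lambda,(\lambda-\lambda^2)^{-1},F_{n,<p}(\lambda)^{-1}]^\dag$ for the ring of overconvergent functions on the ordinary locus of $S$, and let $\mathcal H_n$ be $H^1_\rig(X/S)^{(n)}$ restricted there; by Corollary~\ref{frob-eigen} it is free of rank two on $\{\omega_n,\eta_n\}$ with Frobenius matrix $\left(\begin{smallmatrix}pF_{11}&F_{12}\\pF_{21}&F_{22}\end{smallmatrix}\right)$, whose reduction modulo $p$ has rank one and nonzero eigenvalue $\bar F_{22}$ precisely on the ordinary locus (this last equivalence is the Dwork congruence relating $F_n$ and $F_{n,<p}$, already recorded in the statement). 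First I would invoke the theory of the unit-root subcrystal — or, concretely, solve the Riccati equation $h=(F_{12}+pF_{11}h^{\sigma})(F_{22}+pF_{21}h^{\sigma})^{-1}$ by the usual $p$-adic contraction, whose fixed point is overconvergent: this yields a rank-one sub-$F$-isocrystal $\mathcal U_n\subset\mathcal H_n$ over $\mathcal O$ with $\mathcal H_n/\mathcal U_n$ isoclinic of slope one, and for a point $\alpha$ of the ordinary locus as in the statement the fibre $\mathcal U_n|_{\lambda=\alpha}$ is the unit-root line of $H^1_\rig(X_{\alpha,0}/\F_{p^r})^{(n)}$, on which $\phi^r$ acts by the unit root.

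Next I would determine the Frobenius of $\mathcal U_n$. Base-changing $\mathcal O$ to $K((\lambda))$, Theorem~\ref{frob-thm} (namely the matrix \eqref{frob-sect-thm2-eq0}) shows that the Frobenius is lower triangular in the basis $\wt\omega_n,\wt\eta_n$, so $\wt\eta_n$ spans the slope-zero subspace and, by uniqueness, $\mathcal U_n\otimes K((\lambda))=K((\lambda))\,\wt\eta_n$ with $\Phi(\wt\eta_n)=(-1)^{n(p-1)/N}\wt\eta_n$. Pick an $\mathcal O$-generator $v$ of $\mathcal U_n$ and write $\wt\eta_n=g\,v$ with $g\in K((\lambda))^{\times}$. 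Since the reduction of $v$ spans the eigenline $(\bar F_{12}/\bar F_{22}\,,\,1)$, we may normalize $v=\eta_n+b\,\omega_n$; comparing with $\wt\eta_n=-\tfrac nN(\lambda-\lambda^2)F_n(\lambda)\bigl(\eta_n-\tfrac Nn\tfrac{F_n'(\lambda)}{F_n(\lambda)}\omega_n\bigr)$ from \eqref{main-thm-wt} gives $g=-\tfrac nN(\lambda-\lambda^2)F_n(\lambda)$. From $(-1)^{n(p-1)/N}\wt\eta_n=\Phi(g v)=g^{\sigma}\mu\,v=(g^{\sigma}\mu/g)\wt\eta_n$ we obtain, if $\Phi(v)=\mu v$,
\[
\mu=(-1)^{n(p-1)/N}\,\frac{g}{g^{\sigma}}=(-1)^{n(p-1)/N}\,\frac{\lambda-\lambda^2}{\lambda^{\sigma}-(\lambda^{\sigma})^2}\,\frac{F_n(\lambda)}{F_n(\lambda^{\sigma})}\in\mathcal O^{\times},
\]
the last factor being Dwork's $p$-adic hypergeometric function of the statement.

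Finally, $\phi^r$ acts on $\mathcal U_n$ by $\mu^{(r)}:=\mu\,\sigma(\mu)\cdots\sigma^{r-1}(\mu)$; since $\sigma$ carries $\lambda^{\sigma^i}$ to $\lambda^{\sigma^{i+1}}$ and fixes the sign, this product telescopes to
\[
\mu^{(r)}=(-1)^{rn(p-1)/N}\,\frac{\lambda-\lambda^2}{\lambda^{\sigma^r}-(\lambda^{\sigma^r})^2}\,\frac{F_n(\lambda)}{F_n(\lambda^{\sigma^r})}\in\mathcal O^{\times}.
\]
Evaluating at $\lambda=\alpha$, iteration of $\lambda^{\sigma}|_{\lambda=\alpha}=\alpha^{\sigma}$ gives $\lambda^{\sigma^r}|_{\lambda=\alpha}=\alpha^{\sigma^r}=\alpha$, the last equality because $\alpha\in W(\F_{p^r})$; hence the rational factor equals $1$ and $\mu^{(r)}(\alpha)=(-1)^{rn(p-1)/N}\frac{F_n(\lambda)}{F_n(\lambda^{\sigma^r})}\big|_{\lambda=\alpha}$, which is a $p$-adic unit and therefore the unit root. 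The main obstacle is the first paragraph: establishing the existence, uniqueness and overconvergence of $\mathcal U_n$ over the ordinary locus together with its compatibility with specialization to $\alpha$ — standard Dwork theory, but it must be set up carefully; once that is in place, the rest is the bookkeeping above, powered by Corollary~\ref{frob-eigen} and Theorem~\ref{frob-thm}.
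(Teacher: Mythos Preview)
Your argument is correct, but the paper takes a much shorter route. Rather than constructing the unit-root subcrystal via a Riccati fixed point, the paper simply sets $\eta'_n := F_n(\lambda)^{-1}\wt\eta_n$ and rewrites \eqref{frob-sect-thm2-eq0} in the basis $(\omega_n,\eta'_n)$: the Frobenius becomes lower triangular with diagonal entries $(-1)^{n(p-1)/N}\dfrac{pF_n(\lambda^\sigma)}{F_n(\lambda)}$ and $(-1)^{n(p-1)/N}\dfrac{F_n(\lambda)}{F_n(\lambda^\sigma)}$, and the corollary follows by iterating $r$ times. Your generator $v=\eta_n+b\,\omega_n$ is just $-\tfrac{N}{n}(\lambda-\lambda^2)^{-1}\eta'_n$, which explains the extra factor $\frac{\lambda-\lambda^2}{\lambda^\sigma-(\lambda^\sigma)^2}$ in your $\mu$; it telescopes to $1$ at $\alpha$ as you observe, but the paper's normalization avoids it from the start. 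What your approach buys is an honest justification that the unit-root line and its Frobenius eigenvalue are overconvergent on the ordinary locus and specialize correctly to $\alpha$ --- the paper's two-line proof leaves this implicit (it can be filled in either by the Riccati contraction you sketch, or more cheaply by noting that the characteristic polynomial of $\Phi^r$ in the overconvergent basis $(\omega_n,\eta_n)$ has coefficients in $\mathcal O$ and, computed over $K[[\lambda]]$, factors with roots that are themselves overconvergent).
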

\begin{pf}
Let $\eta'_n:=F_n(\l)^{-1}\wt\eta_n$.
Then one can rewrite \eqref{frob-sect-thm2-eq0} as follows
\[
\begin{pmatrix}
\Phi(\omega_n)&\Phi(\eta'_n)
\end{pmatrix}
=\begin{pmatrix}
\omega_n&\eta'_n
\end{pmatrix}
\begin{pmatrix}
(-1)^{\frac{n(p-1)}{N}}\frac{pF_n(\l^\sigma)}{F_n(\l)}&0\\
\ast&(-1)^{\frac{n(p-1)}{N}}\frac{F_n(\l)}{F_n(\l^\sigma)}
\end{pmatrix},
\]
which gives the eigenvalues of the $p^r$-th Frobenius $\Phi^r$.
\end{pf}
\section{Syntomic regulator as 1-extension in $\FilFMIC(S)$}\label{1-ext-sect}
Let $f:Y\to \P^1_{\Z_p}$ be the HG fibration constructed in \S \ref{main-sect-3}.
Let $h_1,h_2$ be as in \eqref{main-thm-eqfg}.
Let $\ol U$ be the complement of the support of $h_1h_2$ in $Y$, so that
$h_i\in \O(\ol U)^\times$.
Let $S:=\Spec \Z_p[\l,(\l-\l^2)^{-1}]$ and $X:=f^{-1}(S)$ be as in \S \ref{main-sect-1}.
Put $U:=\ol U\cap X$. Then $U$ is a smooth affine variety over $\Z_p$ 
and the complement 
$D:=Y\setminus U$ is a relative NCD over $\Z_p$.
Throughout this section let $W=W(\ol\F_p)$ be the Witt ring of $\ol\F_p$ and
$K=\Frac(W)$ the fraction field.
\subsection{Computing 1-extension}
Put $B:=W[\l,(\l-\l^2)^{-1}]$, $B_K:=B\ot_WK$ and $A:=\vg(U,\O_U)\ot_{\Z_p}W$, 
$A_K:=A\ot_{\Z_p}\Q_p$.
Let $A^\dag$ and $B^\dag$ be the weak completions.
We write $A^\dag_K:=A^\dag\ot_{\Z_p}\Q_p$
and $B^\dag_K:=B^\dag\ot_{\Z_p}\Q_p$.
In terms of overconvergent sheaves, one has
\[
A^\dag_K=\vg(Y^\rig_K,j^\dag_U\O_{Y^\rig_K}),\quad
B^\dag_K=\vg(\P^{1,\rig}_K,j^\dag_S\O_{\P^{1,\rig}_K}).
\]
Let $\sigma$ be a $p$-th Frobenius on $A^\dag$ compatible with the Frobenius on $W$
such that $\sigma(B^\dag)\subset B^\dag$.
Define
\[
\log^{(\sigma)}(x):=p^{-1}\log\left(\frac{x^p}{x^{\sigma}}\right)\in \wh A:=\varprojlim_n
A/p^nA\]
for $x\in \wh A^\times$. 
\begin{lem}\label{log-lem}
If $x\in A^\times$ then $\log^{(\sigma)}(x)\in A^\dag$.
\end{lem}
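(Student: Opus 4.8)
The plan is to reduce the assertion to an elementary convergence estimate inside a single ``radius-$r$'' piece of the weakly complete ring $A^\dag$. Write $x^p/x^\sigma=1-pg$. Since $x,x^{-1}\in A\subset A^\dag$ and $\sigma$ is an endomorphism of $A^\dag$, we have $x^\sigma=\sigma(x)\in(A^\dag)^\times$; and because $\sigma$ is a $p$-th Frobenius, $x^\sigma\equiv x^p\pmod{pA^\dag}$. Hence
\[
g=p^{-1}x^{-\sigma}\bigl(x^\sigma-x^p\bigr)\in A^\dag ,
\]
and it is here, and only here, that the hypothesis $x\in A^\times$ (rather than merely $x\in\widehat A^\times$) is used. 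By the definition of $\log^{(\sigma)}$ one has the identity $\log^{(\sigma)}(x)=-\sum_{n\ge1}\tfrac{p^{n-1}}{n}g^{n}$ in $\widehat A$, so it suffices to prove that this series already converges in $A^\dag$; since $A^\dag\to\widehat A$ is continuous, the two limits then agree.

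To see the convergence in $A^\dag$, fix a presentation $A=W[T_1,\dots,T_m]/I$, so that $A^\dag=W[T_1,\dots,T_m]^\dag/I$, and lift $g$ to $\tilde g=\sum_\alpha a_\alpha T^\alpha\in W[T_1,\dots,T_m]^\dag$ with all $a_\alpha\in W$. Then the Gauss norm $\|\tilde g\|_r:=\sup_\alpha|a_\alpha|\,r^{|\alpha|}$ satisfies $\|\tilde g\|_1\le1$, while $\|\tilde g\|_{r_0}<\infty$ for some $r_0>1$. As $r\mapsto\|\tilde g\|_r$ is nondecreasing on $[1,r_0]$ with $\lim_{r\to1^+}\|\tilde g\|_r=\|\tilde g\|_1\le1<p$, I may choose $r\in(1,r_0]$ with $\|\tilde g\|_r<p$. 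In the Banach $W$-algebra of power series convergent and bounded on the disc of radius $r$ (complete for the submultiplicative norm $\|\cdot\|_r$), the bound $|p^{n-1}/n|\le n\,p^{-(n-1)}$ gives
\[
\Bigl\|\tfrac{p^{n-1}}{n}\,\tilde g^{\,n}\Bigr\|_r\le p\,n\Bigl(\tfrac{\|\tilde g\|_r}{p}\Bigr)^{\!n}\ \xrightarrow[\ n\to\infty\ ]{}\ 0 ,
\]
so $\sum_{n\ge1}\tfrac{p^{n-1}}{n}\tilde g^{\,n}$ converges there, hence in $W[T_1,\dots,T_m]^\dag$. Mapping this limit down to $A^\dag$ and comparing with the displayed identity yields $\log^{(\sigma)}(x)\in A^\dag$.

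The nontrivial inputs are the standard facts of Monsky--Washnitzer theory: each ``radius-$r$'' level of $W[T_1,\dots,T_m]^\dag$ is a Banach $W$-algebra, $W[T_1,\dots,T_m]^\dag$ is the increasing union of these levels, and the natural maps to $\widehat A$ are norm-nonincreasing. With those granted, the mathematical content of the lemma is just the estimate above together with the choice of a radius $r$ with $\|\tilde g\|_r<p$; I expect the only real difficulty to be expository — keeping the passage between $A^\dag$ and its radius-$r$ Banach pieces clean without redeveloping the foundations. If one prefers to bypass presentations, one can instead cite the known fact that $y\mapsto p^{-1}\log(y^p/\sigma(y))$ carries $(A^\dag)^\times$ into $A^\dag$, but the estimate above makes the proof self-contained.
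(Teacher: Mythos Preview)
Your proof is correct and follows essentially the same route as the paper's: write $x^p/x^\sigma=1-pg$ with $g\in A^\dag$, lift $g$ to $\tilde g\in W[T_1,\dots,T_m]^\dag$, and show that $-\sum_{n\ge1}p^{n-1}\tilde g^{\,n}/n$ converges in some radius-$r$ Banach piece $S(r)$ of $W[T_1,\dots,T_m]^\dag$. The only difference is that you are more explicit about one point the paper leaves tacit: you verify that $r\mapsto\|\tilde g\|_r$ is continuous at $r=1$ with $\|\tilde g\|_1\le 1$, so one may choose $r>1$ with $\|\tilde g\|_r<p$, which is exactly the inequality needed to make $\|p^{n-1}\tilde g^{\,n}/n\|_r\to 0$. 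The paper simply picks an $r_0>1$ with $\tilde g\in S(r_0)$ and asserts convergence in $S(r_0)$ without spelling out this shrinking step; your version makes the estimate self-contained.
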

\begin{pf}
Let $x\in A^\times$. Let $z\in A^\dag$ be an element such that
$x^p/x^{\sigma}=1-pz$. Then
\begin{equation}\label{log-lem-eq}
\log^{(\sigma)}(x)=-\sum_{n=1}^\infty \frac{p^{n-1}z^n}{n}.
\end{equation}
Fix a surjection $\pi\colon W[t_1,\dots,t_n]\to A$.
This extends to a surjection 
$\pi^{\dag}\colon W[t_1,\dots,t_n]^{\dag}\to A^{\dag}$.
If we denote by $S(r)\subset W[[t_1,\dots,t_n]]$ 
the subring of power series of radius of convergence $r$,
we have $\cup_{r>1}S(r)=W[t_1,\dots,t_n]^{\dag}$;
therefore, we may choose $r_0>1$ such that $z\in\pi^{\dag}(S(r_0))$.
Since $S(r_0)$ is the completion of $W[t_1,\dots,t_n]$ 
with respect to the $r_0$-Gauss norm,
the right hand side of \eqref{log-lem-eq} belongs to the image of $S(r_0)$. 
This means $\log^{(\sigma)}(x)\in A^{\dag}$.
\end{pf}
Let $\Log(h_i)$ be the log object with a free basis $e_{h_i,0}$ and
$e_{h_i,-2}$ defined in \S \ref{poly-obj-sect}. Recall from \eqref{log-obj-gp} the exact sequence
\[
0\lra A^\dag_{K}(1)\os{s}{\lra} \Log(h_i)\os{\rho}{\lra} A^\dag_{K} \lra0,
\]
where the maps
$s,\rho$ are given by $s(1)=e_{h_i,-2}$ and $\rho(e_{h_i,0})=1$.
Consider a distinguished triangle
\begin{equation}\label{1-ext-eq1}
0\lra A^\dag_{K}(2)\lra [\Log(h_1)(1)\os{\rho}{\to}\Log(h_2)]\lra A_{K}^\dag[-1]\lra0
\end{equation}
in the derived category of $\FilFMIC(U)$.
Each term gives rise to the de Rham complex and then we have a commutative diagram 
\begin{equation}\label{1-ext-eq-dr}
\xymatrix{
& A_{K}^\dag\ar[r]^d&\Omega^1_{A_{K}^\dag/B_{K}^\dag}\\
\Log(h_1)\ar[r]^{a\hspace{1.9cm}}&\Omega^1_{A_{K}^\dag/B_{K}^\dag}\ot \Log(h_1)\op \Log(h_2)
\ar[r]^{b\hspace{1cm}}\ar[u]_{0\op \rho}&
\Omega^2_{A_{K}^\dag/B_{K}^\dag}
\ot \Log(h_1)\op
\Omega^1_{A^\dag_{K}/B^\dag_{K}}\ot \Log(h_2)\ar[u]_{0\op(1\ot\rho)}
\\
A_{K}^\dag\ar[r]^{d\qquad}\ar[u]^s&\Omega^1_{A_{K}^\dag/B_{K}^\dag}
\ar[u]_{(1\ot s)\op0}
}
\end{equation}
where $\ot=\ot_{A_{K}^\dag}$, and 
$\Omega^i_{A_{K}^{\dag}/B_{K}^{\dag}}$ denotes 
the sheaf of continuous $i$-th differentials of $A_{K}$ over $B_{K}$,
and the maps $a$ and $b$ are given by
\[
a(x)=(\nabla(x),-\rho(x)),\quad b(\omega\ot x,y)=\omega\ot\rho(x)+\nabla(y).
\]
Recall from Proposition \ref{prop:from_K2} the 1-extension
\begin{equation}\label{1-ext-eq3}
0\lra H^1(U/S)(2)\lra M_{h_1,h_2}(U/S)\lra B_{K}\lra0
\end{equation}
in $\FilFMIC(S)$.
Here we note that $\dlog\{h_1,h_2\}=0$ since $\{h_1,h_2\}$ belongs to $K_2(X)$.
We simply write $M(U/S)=M_{h_1,h_2}(U/S)$, and denote by $M_\dR(U/S)$ (resp.
$M_\rig(U/S)$) the de Rham cohomology (resp. the rigid cohomology).
Then, in terms of the diagram \eqref{1-ext-eq-dr},
one has
\[
H^j_\rig(U_0/S_0)\cong H^j(\Omega^\bullet_{A^\dag_{K}/B^\dag_{K}}),\quad 
M_{h_1,h_2}^\rig\cong\ker(b)/\Image(a)
\]
where $U_0:=U\times_{\Z_p}\F_p$.
Put
\begin{equation}\label{lifting}
e_\dR:=-\frac{dh_2}{h_2}\ot e_{h_1,0}+e_{h_2,0}
\in \Omega^1_{Y/S}(\log D) e_{h_1,0}\op \Q_p\, e_{h_2,0}
\subset \Omega^1_{A^\dag} e_{h_1,0}\op A^\dag e_{h_2,0}.
\end{equation}
Then a direct computation yields
\begin{equation}\label{1-ext-eq4}
(0\op\rho)(e_\dR)=1,\quad b(e_\dR)=0.
\end{equation}
This shows that $e_\dR\in \Fil^0M_\dR(U/S)$ is the unique lifting
of $1\in B_{K}$ in \eqref{1-ext-eq3}.
Let $e_\rig\in M_\rig(U_0/S_0)$ be the corresponding element via the comparison.

\begin{thm}\label{1-ext-thm1}
Let $\Phi$ be the Frobenius on $M_\rig(U/S)$ induced by $\sigma$.
Then
under the inclusion
\[
H^1_\rig(U_0/S_0)=H^1(\Omega^\bullet_{A_{K}^\dag/B_{K}^\dag})
\lra M_\rig(U_0/S_0),\quad \omega\longmapsto \omega\ot e_{h_1,-2}
\]
we have
\begin{equation}\label{1-ext-thm1-eq1}
\Phi(e_\rig)-e_\rig=p^{-1}\log^{(\sigma)}(h_1)\frac{dh_2^\sigma}{h_2^\sigma}
-\log^{(\sigma)}(h_2)\frac{dh_1}{h_1}\in 
H^1_\rig(U_0/S_0).
\end{equation}
where $\log^{(\sigma)}(x)$ is the function as in Lemma \ref{log-lem}.
\end{thm}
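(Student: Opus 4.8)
The plan is to compute $\Phi(e_\rig)-e_\rig$ directly from an explicit cocycle representative of $e_\rig$ in the total de Rham complex \eqref{1-ext-eq-dr} of the cone \eqref{1-ext-eq1}, and to absorb the ``unwanted'' terms into a single explicit coboundary. Recall from \eqref{lifting}, \eqref{1-ext-eq4} that $e_\dR=-\frac{dh_2}{h_2}\ot e_{h_1,0}+e_{h_2,0}$ is a cocycle in the term $\Omega^1_{A^\dag_K/B^\dag_K}\ot\Log(h_1)\op\Log(h_2)$ of \eqref{1-ext-eq-dr}, representing $e_\rig$. The Frobenius on $M_\rig(U/S)$ is induced on cohomology by the $\sigma$-semilinear chain map of \eqref{1-ext-eq-dr} coming from the Frobenius structures of $\Log(h_1)(1)$ and $\Log(h_2)$; in particular $\Phi(e_\dR)$ is again a cocycle in the same term. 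Since moreover $e_\rig$ is a lifting of $1\in B_K$ in \eqref{1-ext-eq3} and $\Phi$ respects that exact sequence, the class $\Phi(e_\rig)-e_\rig$ lies a priori in the subobject $H^1(U/S)(2)$; only its explicit form remains to be determined.

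First I would evaluate $\Phi$ on $e_\dR$ using $\Phi(e_{h_i,-2})=p^{-1}e_{h_i,-2}$ and $\Phi(e_{h_i,0})=e_{h_i,0}-\log^{(\sigma)}(h_i)e_{h_i,-2}$ (the Frobenius structure of $\Log(h_i)$ relative to the chosen $\sigma$, as in Definition \ref{frob-sect-def2}; here $\log^{(\sigma)}(h_i)\in A^\dag$ by Lemma \ref{log-lem}), the action $\frac{dh_2}{h_2}\mapsto\frac{dh_2^\sigma}{h_2^\sigma}$ on $\Omega^1$, and the extra Tate twist $(1)$ on the $\Log(h_1)$-summand. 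This yields
\begin{multline*}
\Phi(e_\dR)-e_\dR=\Big(-p^{-1}\frac{dh_2^\sigma}{h_2^\sigma}\ot e_{h_1,0}+p^{-1}\log^{(\sigma)}(h_1)\frac{dh_2^\sigma}{h_2^\sigma}\ot e_{h_1,-2}\\
{}+\frac{dh_2}{h_2}\ot e_{h_1,0},\ -\log^{(\sigma)}(h_2)e_{h_2,-2}\Big).
\end{multline*}
Next I would subtract the coboundary $a\big(\log^{(\sigma)}(h_2)e_{h_1,0}\big)$, where $a(x)=(\nabla x,-\rho(x))$ is the differential in \eqref{1-ext-eq-dr} into the term above; explicitly, using $\nabla(e_{h_1,0})=\frac{dh_1}{h_1}e_{h_1,-2}$ and $\rho(e_{h_1,0})=e_{h_2,-2}$, one has $a\big(\log^{(\sigma)}(h_2)e_{h_1,0}\big)=\big(d(\log^{(\sigma)}(h_2))\ot e_{h_1,0}+\log^{(\sigma)}(h_2)\frac{dh_1}{h_1}\ot e_{h_1,-2},\ -\log^{(\sigma)}(h_2)e_{h_2,-2}\big)$. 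The elementary identity $d(\log^{(\sigma)}(h_i))=\frac{dh_i}{h_i}-p^{-1}\frac{dh_i^\sigma}{h_i^\sigma}$ (immediate from $\log^{(\sigma)}(h_i)=\log h_i-p^{-1}\log h_i^\sigma$) makes the $\Log(h_2)$-component and the entire $e_{h_1,0}$-coefficient vanish, leaving exactly $\big(p^{-1}\log^{(\sigma)}(h_1)\frac{dh_2^\sigma}{h_2^\sigma}-\log^{(\sigma)}(h_2)\frac{dh_1}{h_1}\big)\ot e_{h_1,-2}$, which is the image under $\omega\mapsto\omega\ot e_{h_1,-2}$ of the asserted class in \eqref{1-ext-thm1-eq1}. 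That this $1$-form is closed, hence defines a class in $H^1_\rig(U_0/S_0)$, is automatic because $\Omega^2_{A^\dag_K/B^\dag_K}=0$, $U/S$ being of relative dimension one (this is also why $\dlog\{h_1,h_2\}$ vanishes in $H^2_\dR(U/S)$).

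The computation is mechanical once the conventions are fixed; the two points that require care are (i) using the Frobenius structure of $\Log(h_i)$ relative to the given $\sigma$ rather than the standard Frobenius of \S\ref{poly-obj-sect}, so that $\log^{(\sigma)}$ and not $\ln^{(p)}_1$ appears, together with the overconvergence input of Lemma \ref{log-lem}, and (ii) keeping track of the signs in the cone differential and of the Tate twist on $\Log(h_1)(1)$. The only real obstacle is organising this bookkeeping so that the cancellations become transparent; there is no geometric input beyond what is already recorded in \S\ref{1-ext-sect}.
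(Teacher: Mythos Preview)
Your proof is correct and follows essentially the same route as the paper: you represent $e_\rig$ by the explicit cocycle \eqref{lifting}, apply the Frobenius of $\Log(h_i)$ (with the Tate twist on the $\Log(h_1)$-summand), and then subtract the coboundary $a\big(\log^{(\sigma)}(h_2)e_{h_1,0}\big)$, using $d\log^{(\sigma)}(h_2)=\frac{dh_2}{h_2}-p^{-1}\frac{dh_2^\sigma}{h_2^\sigma}$ to cancel the $e_{h_1,0}$- and $e_{h_2,-2}$-terms. The paper carries out exactly this computation in the same order with the same coboundary.
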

\begin{pf}
By \eqref{lifting} we have
\[
\Phi(e_\rig)=
-\frac{dh_2^\sigma}{h_2^\sigma}\ot \left(p^{-1}e_{h_1,0}-p^{-1}
\log^{(\sigma)}(h_1)e_{h_1,-2}\right)
+e_{h_2,0}-\log^{(\sigma)}(h_2)e_{h_2,-2}.
\]
Hence
\begin{align*}
\Phi(e_\rig)-e_\rig
&=\frac{dh_2}{h_2}\ot e_{h_1,0}
-\frac{dh_2^\sigma}{h_2^\sigma}\ot 
\left(p^{-1}e_{h_1,0}-p^{-1}\log^{(\sigma)}h_1)e_{h_1,-2}\right)
-\log^{(\sigma)}(h_2)e_{h_2,-2}\\
&=
\overbrace{p^{-1}\left(p\frac{dh_2}{h_2}-
\frac{dh_2^\sigma}{h_2^\sigma}\right)}^{d\log^{(\sigma)}(h_2)}e_{h_1,0}
+
p^{-1}\log^{(\sigma)}(h_1)\frac{dh_2^\sigma}{h_2^\sigma}e_{h_1,-2}
-\log^{(\sigma)}(h_2)e_{h_2,-2}\\
&\equiv
-\left(-
p^{-1}\log^{(\sigma)}(h_1)\frac{dh_2^\sigma}{h_2^\sigma}
+\log^{(\sigma)}(h_2)\frac{dh_1}{h_1}
\right)e_{h_1,-2}\mod\Image(a)
\end{align*}
where the last equality follows from
\begin{align*}
0\equiv a(\log^{(\sigma)}(h_2)e_{h_1,0})
&=d\log^{(\sigma)}(h_2)e_{h_1,0}+\log^{(\sigma)}(h_2)\nabla(e_{h_1,0})-
\log^{(\sigma)}(h_2)e_{h_2,-2}\\
&=d\log^{(\sigma)}(h_2)e_{h_1,0}+\log^{(\sigma)}(h_2)\frac{dh_1}{h_1}e_{h_1,-2}-
\log^{(\sigma)}(h_2)e_{h_2,-2}.
\end{align*}
This completes the proof.
\end{pf}
\begin{thm}\label{1-ext-thm2}
Let
\[
0\lra H^1(X/S)(2)\lra M(X/S)\lra B_{K}\lra 0
\]
be the 1-extension in $\FilFMIC(S)$ associated to the $K_2$-symbol $\{h_1,h_2\}$
in \eqref{main-thm-eqfg} (see Proposition \ref{prop:from_K2X}).
Let $e_\dR\in \Fil^0M(X/S)$ be the unique lifting of $1\in B$, and $e_\rig\in M_\rig(X/S)$
the corresponding element via the comparison.
Then
\begin{equation}\label{1-ext-thm1-eq2}
\Phi(e_\rig)-e_\rig=p^{-1}\log^{(\sigma)}(h_1)\frac{dh_2^\sigma}{h_2^\sigma}
-\log^{(\sigma)}(h_2)\frac{dh_1}{h_1}\in 
H^1_\rig(X_0/S_0).
\end{equation}
\end{thm}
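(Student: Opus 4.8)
The plan is to deduce \eqref{1-ext-thm1-eq2} from \eqref{1-ext-thm1-eq1}, which has already been established in Theorem \ref{1-ext-thm1}, by transporting the extension along the Gysin exact sequence. Recall from the discussion preceding Proposition \ref{prop:from_K2X} the exact sequence
\[
0\longrightarrow\Ext^1\!\left(\sO_{S_K},H^1(X/S)(2)\right)\xrightarrow{\ \alpha\ }\Ext^1\!\left(\sO_{S_K},H^1(U/S)(2)\right)\xrightarrow{\ \beta\ }\Ext^1\big(\sO_{S_K},C(2)\big),
\]
in which $\alpha$ is the push-out map along the Gysin inclusion $H^1(X/S)(2)\hookrightarrow H^1(U/S)(2)$, and $M(X/S)$ is by construction the unique class with $\alpha\big([M(X/S)]\big)=[M_{h_1,h_2}(U/S)]$ (the right-hand side lies in $\mathrm{Image}(\alpha)=\ker(\beta)$ because $\{h_1,h_2\}\in K_2(X)$ has vanishing tame symbols along $D$, cf.\ Lemma \ref{lem:tame}). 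Hence $M_{h_1,h_2}(U/S)$ is equivalent to a push-out of $M(X/S)$, so there is a morphism $\iota\colon M(X/S)\to M_{h_1,h_2}(U/S)$ in $\FilFMIC(S)$ fitting into a commutative ladder
\[
\xymatrix{
0\ar[r]&H^1(X/S)(2)\ar[r]\ar[d]&M(X/S)\ar[r]\ar[d]^{\iota}&B_K\ar[r]\ar@{=}[d]&0\\
0\ar[r]&H^1(U/S)(2)\ar[r]&M_{h_1,h_2}(U/S)\ar[r]&B_K\ar[r]&0,
}
\]
whose left vertical arrow is the Gysin inclusion. By the snake lemma $\ker(\iota)=\ker\big(H^1(X/S)(2)\to H^1(U/S)(2)\big)$, which vanishes on the de Rham and rigid realizations since the Gysin sequence of Proposition \ref{prop:Gysin} begins $0\to H^1(X/S)\to H^1(U/S)$; so $\iota_\dR$ and $\iota_\rig$ are injective.

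Next I would check that $\iota$ carries the canonical lifting to the canonical lifting. Since the Hodge filtration on $H^1$ of the open curve $U_K/S_K$ has $\Fil^2=0$, one has $\Fil^0 H^1(U/S)(2)=\Fil^2H^1(U/S)\otimes B_K(2)=0$, so the sequence \eqref{1-ext-eq3} admits a unique lifting of $1\in B_K$ inside $\Fil^0$, namely the $e_\dR$ of Theorem \ref{1-ext-thm1}; the element $\iota_\dR(e_\dR)$ is again such a lifting, whence $\iota_\dR(e_\dR)=e_\dR$, and therefore, via the comparison isomorphisms, $\iota_\rig(e_\rig)=e_\rig$ in the notation of that theorem. (The same vanishing with $X$ in place of $U$ shows that $e_\dR$ in the present statement is well-defined.)

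Finally, $\iota_\rig$ is horizontal and compatible with the Frobenius structures, hence commutes with $\Phi$; applying it to $\Phi(e_\rig)-e_\rig$ and invoking Theorem \ref{1-ext-thm1} gives
\[
\iota_\rig\big(\Phi(e_\rig)-e_\rig\big)=p^{-1}\log^{(\sigma)}(h_1)\frac{dh_2^\sigma}{h_2^\sigma}-\log^{(\sigma)}(h_2)\frac{dh_1}{h_1}\in H^1_\rig(U_0/S_0).
\]
On the other hand, the Frobenius of the Tate object $B_K(0)$ fixes $1$, so both $\Phi(e_\rig)$ and $e_\rig$ are liftings of $1$ in the rigid realization of the sequence for $X$; hence $\Phi(e_\rig)-e_\rig$ already lies in the subspace $H^1_\rig(X_0/S_0)$, on which $\iota_\rig$ is the injective Gysin map. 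Thus the right-hand side of the displayed formula lies in the image of $H^1_\rig(X_0/S_0)\hookrightarrow H^1_\rig(U_0/S_0)$ and represents $\Phi(e_\rig)-e_\rig$ there, which is precisely \eqref{1-ext-thm1-eq2}. I expect the only point requiring care to be the first paragraph — unwinding the push-out description of $\alpha$ so that $\iota$ is genuinely a morphism in $\FilFMIC(S)$ respecting both short exact sequences and the filtrations — after which everything is a formal consequence of Theorem \ref{1-ext-thm1}.
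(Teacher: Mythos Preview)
Your proposal is correct and follows essentially the same route as the paper's own proof: both establish the commutative ladder relating $M(X/S)$ to $M_{h_1,h_2}(U/S)$ over the Gysin inclusion $H^1(X/S)\hookrightarrow H^1(U/S)$, then invoke injectivity of the vertical maps to reduce the claim to Theorem~\ref{1-ext-thm1}. The paper's proof is a terse two-line version of exactly this, simply asserting the diagram and the injectivity; your write-up fills in the details the paper leaves implicit (uniqueness of the $\Fil^0$-lifting via $\Fil^2H^1=0$, why $\Phi(e_\rig)-e_\rig$ already lands in $H^1_\rig(X_0/S_0)$, compatibility of $\iota_\rig$ with $\Phi$).
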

\begin{pf}
There is a commutative diagram
\[
\xymatrix{
0\ar[r] &H^1(X/S)(2)\ar[d]\ar[r]&M(X/S)(2)\ar[d]\ar[r]&B_{K}\ar@{=}[d]\ar[r]&0\\
0\ar[r] &H^1(U/S)(2)\ar[r]&M(U/S)(2)\ar[r]&B_{K}\ar[r]&0
}
\]
in $\FilFMIC(S)$. Since the vertical arrows are injective, the assertion is
reduced to Theorem \ref{1-ext-thm1}.
\end{pf}
\subsection{Comparison with the rigid syntomic regultor}
Let
$H^\bullet_{\rig\text{-}\syn}(V,\Q_p(j))$ be the {\it rigid syntomic cohomology}
of a smooth scheme $V$ over $W$ (\cite{Be1} 6.1).
There is the exact sequence 
\[
0\lra H^1_\rig(V_0/\ol\F_p)\lra H^2_{\rig\text{-}\syn}(V,\Q_p(2))
\lra \Fil^2H^2_\rig(V_0/\ol\F_p)
\]
where $V_0:=V\times_W\ol\F_p$ (loc.cit. 6.3).
If the relative dimension of $V/W$ is one, then the last term vanishes and hence
we have an isomorphism
\[
\iota:H^2_{\rig\text{-}\syn}(V,\Q_p(2))\os{\cong}{\lra} H^1_\rig(V_0/\ol\F_p).
\]
Suppose that $V=\Spec A$ is affine and 
let $\sigma$ be a $p$-th Frobenius on $A^\dag$ compatible with the Frobenius
on $W$.
Let
\[
\{-\}_{\rig\text{-}\syn}:A^\times \lra H^1_{\rig\text{-}\syn}(V,\Q_p(1)),\quad
f\longmapsto \mbox{class of }\left(\frac{df}{f},\log^{(\sigma)}f\right).
\]
be the symbol map (\cite{Be1} 10.3) which induces
the {\it rigid syntomic symbol map} on the Milnor $K$-groups
\[
K^M_r(A) \lra H^r_{\rig\text{-}\syn}(V,\Q_p(r)),\quad
\{f_1,\cdots,f_r\}\longmapsto 
\{f_1\}_{\rig\text{-}\syn}\cup\cdots\cup
\{f_1\}_{\rig\text{-}\syn}.
\]

\begin{prop}\label{1-ext-thm3}
The following diagram
\[
\xymatrix{
K^M_2(A)\ar[r]\ar[d]
&\Ext^1_{\Fil\text{-}F\text{-}\mathrm{mod}}(\Q_p,
H^1(V/K)(2))\ar[r]^{\hspace{1.5cm}\rho}
&H^1_\rig(V_0/\ol\F_p)
\ar@{=}[d]\\
H^2_{\rig\text{-}\syn}(V,\Q_p(2))\ar[rr]^\iota_\cong&&H^1_\rig(V_0/\ol\F_p)\\
}
\]
is commutative up to sign.
Here the left vertical arrow denotes
the symbol map on Milnor $K$-groups to the rigid syntomic cohomology groups.
The map $\rho$ is given in the following way.
Let 
\[
0\lra H^1(V/K)(2)\lra M\lra K(0)\lra 0
\]
be an 1-extension of filtered $\Phi$-modules. Let $e_\dR \in \Fil^0M_\dR$
be the unique lifting of $1\in K$. Let $e_\rig\in M_\rig$ be the corresponding element
via the comparison.
Then $\rho$ associates $\Phi(e_\rig)-e_\rig\in H^1_\rig(V/K)$ to the 
above extension.
\end{prop}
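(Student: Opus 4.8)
The plan is to evaluate both composites in the diagram as explicit elements of $H^1_\rig(V_0/\ol\F_p)$ and to check that they coincide up to sign. The upper route is already essentially computed: for a symbol $\{f,g\}\in K^M_2(A)$ (so $f,g\in A^\times$), the extension $M_{f,g}(V/K)$ carries the distinguished lift $e_\dR$ of $1$ given by formula \eqref{lifting}; it is the unique lift lying in $\Fil^0$, because the Hodge filtration is strict in the extension and $\Fil^0$ of $H^1(V/K)(2)$ vanishes for a curve, whence $\Phi(e_\rig)-e_\rig$ depends only on the extension class and $\rho$ is well defined. Running the computation in the proof of Theorem \ref{1-ext-thm1} over the base $S=\Spec W$ then gives
\[
\rho\big(M_{f,g}(V/K)\big)=\Phi(e_\rig)-e_\rig=p^{-1}\log^{(\sigma)}(f)\frac{dg^\sigma}{g^\sigma}-\log^{(\sigma)}(g)\frac{df}{f}\ \in\ H^1_\rig(V_0/\ol\F_p),
\]
the right-hand side being a closed element of $\Omega^1_{A^\dag_K}$ since $\Omega^2_{A^\dag_K}=0$ (here Lemma \ref{log-lem} ensures $\log^{(\sigma)}(f),\log^{(\sigma)}(g)\in A^\dag$).

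For the lower route I would use Besser's explicit mapping-fibre model of the rigid syntomic complex attached to the Frobenius $\sigma$ on $A^\dag$ (\cite{Be1}, \S 6 and \S 10): a degree-$n$ cochain for $\Q_p(j)$ is, roughly, a triple consisting of an element of $\Fil^j\Omega^n_{A_K}$, an element of $\Omega^n_{A^\dag_K}$, and a homotopy term in $\Omega^{n-1}_{A^\dag_K}$, and in this model $\{f\}_{\rig\text{-}\syn}\in H^1_{\rig\text{-}\syn}(V,\Q_p(1))$ is represented, as recalled above, by the pair $\bigl(\tfrac{df}{f},\log^{(\sigma)}(f)\bigr)$, the cocycle identity being exactly $\tfrac{df}{f}-p^{-1}\tfrac{df^\sigma}{f^\sigma}=d\log^{(\sigma)}(f)$. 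Plugging this into the standard cochain formula for the cup product, $\{f\}_{\rig\text{-}\syn}\cup\{g\}_{\rig\text{-}\syn}$ is represented by a degree-$2$ cocycle whose de Rham $2$-form component is $\tfrac{df}{f}\wedge\tfrac{dg}{g}$ and whose homotopy component is, up to sign, $p^{-1}\log^{(\sigma)}(f)\tfrac{dg^\sigma}{g^\sigma}-\log^{(\sigma)}(g)\tfrac{df}{f}$. Since $\dim V=1$ we have $\Omega^2_{A_K}=0$, so the $2$-form component vanishes identically; the class therefore lies in the image of the injection $H^1_\rig(V_0/\ol\F_p)\hra H^2_{\rig\text{-}\syn}(V,\Q_p(2))$ of the quoted exact sequence --- whose third term likewise vanishes for a curve, which is why $\iota$ is an isomorphism --- and $\iota$ carries it to the class of the above $1$-form.

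Comparing the two displays, the bottom composite (the syntomic symbol map followed by $\iota$) and the top composite (the map \eqref{eq:fromK2} followed by $\rho$) send $\{f,g\}$ to the same element of $H^1_\rig(V_0/\ol\F_p)$, up to the sign introduced by the cup-product convention, which is the assertion. I expect the main obstacle to be purely bookkeeping: pinning down Besser's cochain-level cup product, identifying $\iota$ at the cochain level --- equivalently, checking that the comparison isomorphism $c$ used to define $e_\rig$ is compatible with the specialization map implicit in $\iota$ --- and tracking signs throughout. None of this is conceptually deep once the mapping-fibre model is fixed; alternatively, one can avoid the explicit cochains by invoking Bannai's identification of syntomic cohomology with Yoneda $\Ext$-groups of syntomic coefficients (\cite{Bannai00}), under which $H^2_{\rig\text{-}\syn}(V,\Q_p(2))\cong\Ext^1(\Q_p,H^1(V/K)(2))$ carries the syntomic symbol of $\{f,g\}$ to $M_{f,g}(V/K)$ and the connecting map of that sequence to $M\mapsto\Phi(e_\rig)-e_\rig$; the proposition then reduces to this compatibility together with the elementary description of $\Ext^1$ in $\Fil\text{-}F\text{-}\mathrm{mod}$ via $\Fil^0$-lifts.
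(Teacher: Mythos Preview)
Your proposal is correct and follows essentially the same approach as the paper: both routes are evaluated explicitly, the upper one via the computation of Theorem~\ref{1-ext-thm1} yielding $p^{-1}\log^{(\sigma)}(f)\frac{dg^\sigma}{g^\sigma}-\log^{(\sigma)}(g)\frac{df}{f}$, and the lower one via Besser's explicit cup-product formula (the paper cites \cite{Be2}~(3.2)) yielding the same $1$-form, whence the commutativity up to sign. Your write-up is more detailed in justifying why $e_\dR$ is unique and why $\iota$ is an isomorphism, and you sketch an alternative via Bannai's $\Ext$ description, but the core argument is identical to the paper's.
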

\begin{pf}
By definition of the cup product of syntomic cohomology, we have
\[
\{f\}_{\rig\text{-}\syn}\cup\{g\}_{\rig\text{-}\syn}
=\mbox{(the cohomology class of $R_{f,g}$)}
\]
for $\{f,g\}\in K^M_2(A)$ where
\[
R_{f,g}:=\left(0,p^{-1}\log^{(\sigma)}(f)\frac{dg^\sigma}{g^\sigma}
-\log^{(\sigma)}(g)\frac{df}{f}\right)\in \{0\}\op \Omega^1_{A^\dag}
\subset \R\Gamma_{\rig\text{-}\syn}(V,2)
\] 
(see also \cite{Be2} (3.2)).
This means
\[
\iota\{f,g\}_{\rig\text{-}\syn}=p^{-1}\log^{(\sigma)}(f)\frac{dg^\sigma}{g^\sigma}
-\log^{(\sigma)}(g)\frac{df}{f}\in H^1_\rig(V_0/\ol\F_p),
\] 
and this agrees with \eqref{1-ext-thm1-eq1} up to sign.
\end{pf}
\section{Proof of Main Theorem : Part 1}\label{mainpf1-sect}
We are now in a position to prove Theorem \ref{main-thm}.
Let 
\[
B=W[\l,(\l-\l^2)^{-1}]\subset B_K=K[\l,(\l-\l^2)^{-1}]
\subset B^\dag_{K}=B^\dag\ot_{\Z_p}\Q_p=K[\l,(\l-\l^2)^{-1}]^\dag
\]
be as in the beginning of \S \ref{1-ext-sect}.
Recall from Theorem \ref{1-ext-thm2} the 1-extension
\[
0\lra H^1(X/S)(2)\lra M(X/S)\lra B_{K}\lra 0
\]
in $\FilFMIC(S)$.
Let $e_\dR\in \Fil^0M_\dR(X/S)$ be the unique lifting of $1\in B$, and
$e_\rig\in M_\rig(X/S)$ the element corresponding to $e_\dR$
via the comparison
\[
B^\dag_{K}\ot_{B_{K}} H^1_\dR(X/S)\cong 
H^1_\rig(X_0/S_0)
\]
where $X_0:=X\times_{\Z_p}\F_p$ and $S_0:=\Spec\ol\F_p[\l,(\l-\l^2)^{-1}]$.

\begin{lem}\label{gm-lem1}
\begin{equation}\label{gm-prop-2}
\nabla(e_\dR)=\frac{dh_1}{h_1}\frac{dh_2}{h_2}
=\sum_{n=1}^{N-1}\frac{\zeta_1^n-\zeta_2^n}{N}\cdot
\frac{d\l}{1-\l}\omega_n.
\end{equation}
\end{lem}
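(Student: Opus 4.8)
The plan is to identify $\nabla(e_\dR)$ in three stages: abstractly, then at the cochain level, then by an explicit computation on $X$. Since $e_\dR$ lifts $1\in B_K$ and the unit section is horizontal, $\nabla(e_\dR)$ maps to $0$ under $\Omega^1_{B_K}\otimes M_\dR(X/S)\to\Omega^1_{B_K}\otimes B_K$; hence $\nabla(e_\dR)\in\Omega^1_{B_K}\otimes_{B_K}H^1_\dR(X/S)$, and it remains only to pin down this element. Throughout I use the description of $M(X/S)_\dR$ as the relative de Rham cohomology of the cone $[\sLog(h_1)(1)\to\sLog(h_2)]$, the explicit complex \eqref{1-ext-eq-dr}, and the explicit lift \eqref{lifting}.

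\emph{The first equality.} The Gauss--Manin connection on $M(X/S)_\dR$ is computed by the standard recipe: regard the cocycle $e_\dR=-\tfrac{dh_2}{h_2}\otimes e_{h_1,0}+e_{h_2,0}$ (with $\tfrac{dh_2}{h_2}$ the relative $\dlog$, as in \eqref{lifting}) as a cochain in the analogous complex built from the \emph{absolute} differentials $\Omega^\bullet_{A^\dag_K}$ and the absolute connections $\nabla(e_{h_i,0})=\tfrac{dh_i}{h_i}\otimes e_{h_i,-2}$, apply the absolute differential, and retain the part lying in $\Omega^1_{B_K}\otimes(\text{degree-one part of the relative complex }\eqref{1-ext-eq-dr})$. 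Writing $d=d_{/S}+d_S$ for the splitting into relative and base directions and using that relative $2$-forms vanish (the fibres are curves), that $\Omega^2_{B_K}=0$, and that $d_{\mathrm{abs}}\bigl(\tfrac{d_{/S}h_2}{h_2}\bigr)=0$ because $\tfrac{d_{/S}h_2}{h_2}=2\tfrac{dx}{x}+2\tfrac{dx}{1-x}$ has $\l$-independent coefficients, one sees that the absolute differential of $e_\dR$ has exactly one surviving term in each slot of the cone. A single change of representative — subtracting $a\bigl(-\tfrac{d_Sh_2}{h_2}\otimes e_{h_1,0}\bigr)$, which cancels the $\sLog(h_2)$-component — then gives, via the embedding $\omega\mapsto\omega\otimes e_{h_1,-2}$ of $H^1_\rig(U_0/S_0)$ into $M_\rig(U_0/S_0)$, the identity
\[
\nabla(e_\dR)=\frac{dh_1}{h_1}\wedge\frac{dh_2}{h_2}\quad\text{in}\quad\Omega^1_{B_K}\otimes H^1_\dR(U/S),
\]
the right-hand side being the image of the logarithmic $2$-form $\tfrac{dh_1}{h_1}\wedge\tfrac{dh_2}{h_2}$ on $Y$ under $\Omega^2_Y(\log D)\to\Omega^1_{B_K}\otimes\Omega^1_{X/S}(\log D)\to\Omega^1_{B_K}\otimes H^1_\dR(U/S)$ (signs as fixed in \S\ref{1-ext-sect}). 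As $e_\dR$ comes from $M(X/S)$, this class actually lies in $\Omega^1_{B_K}\otimes H^1_\dR(X/S)$; the next step exhibits it.

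\emph{The second equality.} Introduce $u:=y/(1-x)$, so that $u^N=\tfrac{x(1-\l x)}{1-x}$; then $h_1=\tfrac{u-\zeta_1}{u-\zeta_2}$, $h_2=\tfrac{(1-\l)x^2}{(1-x)^2}$, and $\omega_n=\tfrac{dx}{(1-x)u^n}$ with $[\zeta]\omega_n=\zeta^n\omega_n$. Splitting $\tfrac{dh_i}{h_i}=(\partial_\l\log h_i)\,d\l+(\text{relative part})$, with $\partial_\l\log h_2=-\tfrac1{1-\l}$, and using $u^N=\tfrac{x(1-\l x)}{1-x}$ to simplify, the $d\l$-component of $\tfrac{dh_1}{h_1}\wedge\tfrac{dh_2}{h_2}$ reduces to
\[
\nabla(e_\dR)=\frac{d\l}{N(1-\l)}\otimes(\zeta_1-\zeta_2)\,\frac{u\,(u^N-1)}{(u-\zeta_1)(u-\zeta_2)\,x(1-\l x)}\,dx .
\]
Now $u^N(1-x)=x(1-\l x)$ turns the coefficient into $\tfrac{(\zeta_1-\zeta_2)(u^N-1)}{(u-\zeta_1)(u-\zeta_2)u^{N-1}(1-x)}$, and since $\zeta_i^N=1$ gives $u^N-1=(u-\zeta_i)\sum_{k=0}^{N-1}\zeta_i^k u^{N-1-k}$, one has the elementary cyclotomic identity
\[
(\zeta_1-\zeta_2)\,\frac{u^N-1}{(u-\zeta_1)(u-\zeta_2)}=(u^N-1)\Bigl(\frac1{u-\zeta_1}-\frac1{u-\zeta_2}\Bigr)=\sum_{k=1}^{N-1}(\zeta_1^k-\zeta_2^k)\,u^{N-1-k}.
\]
Substituting this and simplifying the power of $u$ identifies the relative $1$-form above with $\sum_{n=1}^{N-1}(\zeta_1^n-\zeta_2^n)\omega_n$, which yields the claimed formula. (Equivalently, applying the idempotents $P_n=\tfrac1N\sum_{\zeta\in\mu_N}\zeta^{-n}[\zeta]$ together with $\tfrac1{u-\zeta_i}=\tfrac{\sum_k\zeta_i^k u^{N-1-k}}{u^N-1}$ shows directly that the $(n)$-isotypic part of $\nabla(e_\dR)$ is exactly $\tfrac{\zeta_1^n-\zeta_2^n}{N}\tfrac{d\l}{1-\l}\omega_n$, while the $(0)$-part vanishes.)

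\emph{Main obstacle.} The delicate step is the first equality: tracking the signs in the cone complex \eqref{1-ext-eq-dr}, choosing the correct change of representative, and verifying that the resulting logarithmic form on $U$ actually represents a class in $H^1_\dR(X/S)$ rather than merely in $H^1_\dR(U/S)$. The second equality, though superficially long, collapses cleanly once the coordinate $u=y/(1-x)$ is introduced, and requires no hypergeometric input.
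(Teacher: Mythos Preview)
Your proof is correct and carries out in full the computation the paper leaves as an ``Exercise.'' There is no alternative approach to compare against: the paper gives no argument at all, and what you have written is precisely the kind of direct verification the authors intended the reader to supply. Your choice of the coordinate $u=y/(1-x)$ and the cyclotomic identity
\[
(\zeta_1-\zeta_2)\,\frac{u^N-1}{(u-\zeta_1)(u-\zeta_2)}=\sum_{k=1}^{N-1}(\zeta_1^k-\zeta_2^k)\,u^{N-1-k}
\]
are the natural tools, and the reduction of $\dfrac{dh_1}{h_1}\wedge\dfrac{dh_2}{h_2}$ to $\dfrac{d\l}{1-\l}\otimes\sum_n\dfrac{\zeta_1^n-\zeta_2^n}{N}\,\omega_n$ goes through cleanly once one observes $1-2x+\l x^2=-(1-x)(u^N-1)$. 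The only caveat is a global sign, which depends on the convention for the isomorphism $\Omega^2_Y(\log D)\to\Omega^1_S\otimes\Omega^1_{X/S}$; the paper itself is not fully explicit about this (cf.\ the ``up to sign'' in Proposition~\ref{1-ext-thm3}), so your treatment is consistent with the surrounding text.
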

\begin{pf}
Exercise.
\end{pf}

We turn to the proof of Theorem \ref{main-thm}.
Define overconvergent functions
$\wt\ve^{(n)}_1(\l),\,\wt\ve^{(n)}_2(\l)\in B^\dag_{K}$
by
\begin{equation}\label{mainpf1-eq1}
e_\rig-\Phi(e_\rig)=
\sum_{n=1}^{N-1}
\wt\ve^{(n)}_1(\l)\omega_n+\wt\ve^{(n)}_2(\l)\eta_n
\in B^\dag_{K}\ot_{B_{K}} H^1_\dR(X/S).
\end{equation}
Then
\[
\reg_\syn(\{h_1,h_2\}|_{\l=\alpha})=\sum_{n=1}^{N-1}
\wt\ve^{(n)}_1(\alpha)\omega_n+\wt\ve^{(n)}_2(\alpha)\eta_n
\]
by Theorem \ref{1-ext-thm2} and Proposition \ref{1-ext-thm3}.
Rewrite
\[
\wt\ve^{(n)}_1(\l)\omega_n+\wt\ve^{(n)}_2(\l)\eta_n
=\wt E^{(n)}_1(\l)\wt\omega_n+\wt E^{(n)}_2(\l)\wt\eta_n
\]
where $\{\wt\omega_n,\wt\eta_n\}$ is the basis \eqref{main-thm-wt}.
Our goal is to show
\[
\wt E^{(n)}_i(\l)=\frac{\zeta^n_1-\zeta^n_2}{N}E^{(n)}_i(\l).
\]
To do this it is enough to check the differential equations
\eqref{main-diffeq-1-p} and \eqref{main-diffeq-2-p} and the initial conditions
\eqref{main-diffeq-3-p} for $\wt E^{(n)}_i(\l)$. 
In this section we check the differential equations, namely
\begin{equation}\label{main-diffeq-1}
\frac{d}{d\l}\wt E_1^{(n)}(\l)=
\frac{\zeta_1^n-\zeta_2^n}{N}\left(\frac{F_n(\l)}{1-\l}-(-1)^{\frac{(p-1)n}{N}}p^{-1}
\frac{F_n(\l^\sigma)}{1-\l^\sigma}\frac{d\l^\sigma}{d\l}\right),
\end{equation}
\begin{equation}\label{main-diffeq-2}
\frac{d}{d\l}\wt E_2^{(n)}(\l)=
\frac{\zeta_1^n-\zeta_2^n}{N}\left(
\frac{E_1(\l)F_n(\l)^{-2}}{\l-\l^2}+
(-1)^{\frac{(p-1)n}{N}}p^{-1}\tau^{(\sigma)}_n(\l)
\frac{F_n(\l^\sigma)}{1-\l^\sigma}\frac{d\l^\sigma}{d\l}\right).
\end{equation}

Notice that $\wt E^{(n)}_i(\l)$ belong to an affinoid ring
\[
C:=\Q\ot\varprojlim_n \left(W/p^nW[\l,(\l-\l^2)^{-1},F_{n,<p}(\l)^{-1}]\right)\subset 
K[[\l]]
\]
where 
\[
F_{n,<p}(\l):=\sum_{i=0}^{p-1} \frac{(n/N)_i(1-n/N)_i}{i!^2}\l^i\in \Z_p[\l]
 \]
 is a truncated polynomial.
Applying $\nabla$ on \eqref{mainpf1-eq1}, we have
\begin{align*}
\nabla(e_\dR)-\Phi\nabla(e_\dR)
&=\sum_{n=1}^{N-1}d\,\wt E^{(n)}_1(\l)\wt\omega_n+\wt E^{(n)}_1(\l)\nabla\wt\omega_n
+d\,\wt E^{(n)}_2(\l)\wt\eta_n+\wt E^{(n)}_2(\l)\nabla \wt\eta_n\\
&=\sum_{n=1}^{N-1}d\,\wt E^{(n)}_1(\l)\ot\wt\omega_n-\wt E^{(n)}_1(\l)
\frac{F_n(\l)^{-1}}{\l-\l^2}d\l \ot\wt\eta_n
+d\,\wt E^{(n)}_2(\l)\ot\wt\eta_n\\
&\in\wh\Omega^1_{C}\ot_B H^1_\dR(X_{\Q_p}/S_{\Q_p})
\end{align*}
where the second equality follows from Proposition \ref{gm-jac-sect-prop3}.
On the other hand, applying $1-\Phi$ on \eqref{gm-prop-2}, we have
\begin{align*}
\nabla(e_\dR)-\Phi\nabla(e_\dR)
&=\sum_{n=1}^{N-1}\frac{\zeta_1^n-\zeta_2^n}{N}
\left(\frac{d\l}{1-\l}\omega_n-\frac{d\l^\sigma}{1-\l^\sigma}\Phi(\omega_n)\right)\\
&=\sum_{n=1}^{N-1}\frac{\zeta_1^n-\zeta_2^n}{N}
\left(F_n(\l)\frac{d\l}{1-\l}\wt\omega_n-F_n(\l^\sigma)
\frac{d\l^\sigma}{1-\l^\sigma}\Phi(\wt\omega_n)\right)\\
&=\sum_{n=1}^{N-1}\frac{\zeta_1^n-\zeta_2^n}{N}
\left(F_n(\l)\frac{d\l}{1-\l}\wt\omega_n-(-1)^{\frac{n(p-1)}{N}}
p^{-1}F_n(\l^\sigma)\frac{d\l^\sigma}{1-\l^\sigma}
(\wt\omega_n-\tau^{(p)}_n(\l)\wt\eta_n)\right)\\
&\in\wh\Omega^1_{C}\ot_B H^1_\dR(X_{\Q_p}/S_{\Q_p})
\end{align*}
where the third equality follows from Theorem \ref{frob-thm}
and taking account into the Tate twist in ``$H^1(X/S)(2)$''. 
Comparing the two results,
 \eqref{main-diffeq-1} and \eqref{main-diffeq-2} are immediate.

\section{Proof of Main Theorem : Part 2}\label{mainpf2-sect}
There remains to prove 
\begin{equation}\label{main-diffeq-3}
\wt E^{(n)}_1(0)=0,\quad \wt E^{(n)}_2(0)=2(\zeta^n_1-\zeta^n_2)\sum_{\nu^N=-1}
\nu^{-n}\mathrm{ln}^{(p)}_2(\nu).
\end{equation}

Let $W=W(\ol\F_p)$ be the Witt ring.
We use the notation in \eqref{sing-point} and \eqref{gm-sect-eq1}.
Moreover we put $Z_K:=Z\times_WK$, $\cY_K:=\cY\times _{W[[\l]]}K[[\l]]$
and $\cX_K:=\cY\times _{W[[\l]]}K((\l))$:
\[
\xymatrix{
\cY\ar[d]&\cX\ar[l]\ar[d]\\
\Spec W[[\l]]&\Spec W((\l))\ar[l]
}
\quad \xymatrix{
\cY_K\ar[d]&\cX_K\ar[l]\ar[d]\\
\Spec K[[\l]]&\Spec K((\l)).\ar[l]
}
\]

\subsection{Syntomic cohomology of log schemes}
In this section we use the syntomic cohomology by Kato \cite{Ka1}
rather than the rigid syntomic cohomology by Besser, 
as we shall discuss schemes such as $\cY$ which are 
regular but are not of finite type over $W$.

\medskip

Let $X$ be a scheme over $W$. We write $W_n:=W/p^n$ and 
$X_n:=X\times_W\Spec W_n$ for an integer $n\geq 1$.
Let $0\leq r <p$ be an integer.
Suppose that 
$X$ satisfies the assumption in \cite{Ka1} 2.4.
The syntomic complex $\cS_n(r)_X$ is defined to be the mapping fiber
of the homomorphism
\begin{equation}\label{syn-cpx-eq0}
1-p^{-r}f:
J_{D_n}^{[r-\bullet]}\ot_{\O_{P_n}}\Omega^\bullet_{P_n}
\lra \O_{D_n}\ot_{\O_{P_n}}\Omega^\bullet_{P_n}
\end{equation}
of sheaves on the etale site of $X_1$
(\cite{Ka1} 2.5, \cite{Ka2} (1.6)), where $X\hra P$ an immersion such that
$P$ is endowed with a $p$-th Frobenius $f$ and $D_n$ denotes
the DP-envelope of $X_n\subset P_n$ with respect to the canonical
divided power on $pW_n$, and $J^{[i]}_{D_n}$ is the $i$-th DP ideal.
The term of $\cS_n(r)_X$ of degree $q$ is
\begin{equation}\label{syn-cpx-eq1}
(J_{D_n}^{[r-q]}\ot_{\O_{P_n}}\Omega^q_{P_n})
\op(\O_{D_n}\ot_{\O_{P_n}}\Omega^{q-1}_{P_n}).
\end{equation}
We write $\cS_n(r)_{X,P,f}$ if we make the choice of $(P,f)$ clear.
Although the syntomic complex apparently depends on the choice of 
$(P,f)$, there is a functorial isomorphism between $\cS_n(r)_{X,P,f}$ and
$\cS_n(r)_{X,P',f'}$
in the derived category $D^b(X_1^\et)$ of bounded complexes of sheaves on $(X_1)_\et$
(\cite{Ka1} p. 412).
The syntomic cohomology is defined to be the etale cohomology of the syntomic complex
\[
H^i_\syn(X,\Z/p^n(r)):=H^i_{\et}(X_1,\cS_n(r)_X),\quad H^i_\syn(X,\Z_p(r)):=
\varprojlim_n H^i_\syn(X,\Z/p^n(r)).
\]
Let $U=\coprod U_i\to X$ be an etale covering.
Then we may replace $\cS_n(r)_X$ with $\cS_n(r)_{U^\bullet}$
which is defined to be the total complex of 
\[
 p_{1*}\cS_n(r)_{U^{(1)},P_1,f_1}\lra p_{2*}\cS_n(r)_{U^{(2)},
P_2,f_2}\lra \cdots
\]
where $p_k:U^{(k)}:=U\times_X\cdots\times_X U\to X$
and we choose $(P_i,f_i)$'s such that
all arrows are homomorphism of complexes (not only homomorphisms
in the derived category).
Let $\cS_n(r)'_X$ be the mapping fiber of \eqref{syn-cpx-eq0}
replacing $1-p^{-r}f$ with $p^r-f$
 \begin{equation}\label{syn-cpx-eq3}
\xymatrix{
\cS_n(r)_X\ar[r]\ar[d]&J_{D_n}^{[r-\bullet]}\ot_{\O_{P_n}}\Omega^\bullet_{P_n}
\ar@{=}[d]\ar[r]^{1-p^{-r}f}& \O_{D_n}\ot_{\O_{P_n}}\Omega^\bullet_{P_n}\ar[d]^{p^r}\\
\cS_n(r)'_X\ar[r]&J_{D_n}^{[r-\bullet]}\ot_{\O_{P_n}}\Omega^\bullet_{P_n}
\ar[r]^{p^r-f}& \O_{D_n}\ot_{\O_{P_n}}\Omega^\bullet_{P_n}.
}
\end{equation}
Let $H_\syn(X,\Z/p^n(r))':=H_\et(X_1,\cS_n(r)'_X)$ and
$H_\syn(X,\Z_p(r))':=\varprojlim_n H_\et(X_1,\cS_n(r)'_X)$.
The diagram \eqref{syn-cpx-eq3} implies that
$H_\syn(X,\Z_p(r))\to H_\syn(X,\Z_p(r))'$ is bijective modulo torsion.
We claim that there is a commutative diagram 
 \begin{equation}\label{syn-cpx-eq4}
\xymatrix{
J_{D_n}^{[r-\bullet]}\ot_{\O_{P_n}}\Omega^\bullet_{P_n}\ar[r]^{f}\ar[d]
&\O_{D_n}\ot_{\O_{P_n}}\Omega^\bullet_{P_n}\ar[d]\\
\Omega^{\bullet\geq r}_{X_n/W_n}\ar[r]^{f_{X_n}}& \Omega^{\bullet}_{X_n/W_n}
}
\end{equation}
in $D^b(X_1^\et)$.
Note first that the right vertical arrow is a quasi-isomorphism (\cite{Ka1} Theorem 1.7).
Then $f_{X_n}$ is defined to be the composition of
$\Omega^{\geq r}_{Z_n/W_n}\hra \Omega^\bullet_{X_n/W_n}\os{\sim}{\leftarrow}
\O_{D_n}\ot_{\O_{P_n}}\Omega^\bullet_{P_n}
\os{f}{\to} \O_{D_n}\ot_{\O_{P_n}}\Omega^\bullet_{P_n}\os{\sim}{\to}
\Omega^\bullet_{X_n/W_n}$. 
The commutative diagram \eqref{syn-cpx-eq4} is immediate from the definition of $f_X$.
This gives rise to a homomorphism
 \begin{equation}\label{syn-cpx-hom}
\phi_{X_n}:\cS_n(r)'_X
\lra 
\text{Mapping Fiber of }(p^r-f_{X_n})
 \end{equation}
 in $D^b(X_1^\et)$.
 One can also define the syntomic complexes
\[
\cS_n(r)_{(X,M)}
,\quad
\cS_n(r)'_{(X,M)}
\] 
of a log scheme $(X,M)$ over the base $\Spec W$ with trivial log structure
which satisfies \cite{AS-JAG} Condition 2.2.3 (see also Prop. 2.2.6).
In a down-to-earth manner, the syntomic complex $\cS_n(r)_{(X,M)}$
is given as the mapping fiber of
\[
1-p^{-r}f:J^{[r-\bullet]}_{D_n}\ot_{\O_{P_n}}\omega^\bullet_{(P_n,M_{P_n})}
\lra \O_{D_n}\ot_{\O_{P_n}}\omega^\bullet_{(P_n,M_{P_n})}
\]
and $\cS_n(r)'_{(X,M)}$
is the mapping fiber of $p^r-f$
where $(X,M)\hra (P,M_P)$ is as in \cite{AS-JAG} Condition 2.2.3, and
$\omega_{(P_n,M_{P_n})}$ denotes the differential module of the log scheme 
$(P_n,M_{P_n})=(P,M_P)\ot\Z/p^n$ (\cite{Ka3} 1.7).
If $M$ is the log structure associated to a normal crossing divisor $D$,
we write $(X,M)=X(D)$ and 
\[
H^i_\syn(X(D),\Z/p^n(r))=H^i_\et(X_1,\cS_n(r)_{X(D)}),
\quad
H^i_\syn(X(D),\Z/p^n(r))'=H^i_\et(X_1,\cS_n(r)'_{X(D)}).
\]
In the same way as before, one can define a homomorphism
\[
f_{X(D)_n/W_n}:
\Omega_{X_n/W_n}^{\bullet\geq r}(\log D_n)\lra 
\Omega_{X_n/W_n}^{\bullet}(\log D_n)
\]
and one has a homomorphism
 \begin{equation}\label{syn-cpx-hom-log}
\phi_{X(D)_n}:\cS_n(r)'_{X(D)}
\lra 
\text{Mapping Fiber of }(p^r-f_{X(D)_n/W_n})
 \end{equation}
 in $D^b(X_1^\et)$.
Let $\cY$ be a regular scheme.
Let $f:\cY\to S:=\Spec W[[\l]]$ be a flat and quasi-projective 
morphism which is smooth over 
$\Spec W((\l))=\Spec W[[\l]][\l^{-1}]$.
Let $D:=f^{-1}(0)$ be the fiber over $\l=0$.
Suppose that the reduced part $D_{\mathrm{red}}$ is a relative NCD over $W$, 
and the multiplicity of
each component of $D$ is prime to $p$.
We endow the log structure $N$ on $S$ associated to the divisor $\l=0$, and
the canonical DP structure on $pW[[\l]]$.
Then $\cY(D)/(S,N)$ sits in the situation in \cite{Ka3} \S 6.
Let $u^{\log}:((\cY_n/S_n)^{\log}_\crys)^\sim\to (\cY_n^\et)^\sim$ 
be the canonical morphism
from the category of log crystals to the category of etale sheaves. 
By \cite{Ka3} Theorem (6.4),
$Ru_*^{\log}\O_{\cY_n/S_n}$ is canonically
quasi-isomorphic to the log de Rham complex
\[
\omega_{\cY_n/S_n}^{\bullet}:=\Coker\left[
\frac{d\l}{\l}\ot\Omega_{\cY_n/W_n}^{\bullet-1}(\log D_n)\to
\Omega_{\cY_n/W_n}^{\bullet}(\log D_n)\right].\]
We fix a $p$-th
Frobenius $\sigma$ on $W[[\l]]$ such that $\sigma(\l)=a\l^p$ 
for some $a\in 1+pW$.
In the same way as before one can define a homomorphism
\begin{equation}\label{syn-cpx-hom-log-rel-f}
f_{\cY_n/S_n}:
\omega_{\cY_n/S_n}^{\bullet\geq r}\lra 
\omega_{\cY_n/S_n}^{\bullet}
\end{equation}
compatible with $\sigma$,
and
\[
\phi_{\cY_n/S_n}:\cS_n(r)'_{\cY(D)}
\lra 
\text{Mapping Fiber of }(p^r-f_{\cY_n/S_n})
\]
which obviously factors through $\phi_{\cY(D)_n}$ in \eqref{syn-cpx-hom-log}.
We sum up the above in the following proposition.
\begin{prop}\label{syn-cpx-hom-prop}
Let $f:\cY\to S:=\Spec W[[\l]]$ be a flat and quasi-projective 
morphism such that it is smooth over 
$\Spec W((\l))=\Spec W[[\l]][\l^{-1}]$, and letting
$D:=f^{-1}(0)$ be the fiber over $\l=0$, 
the reduced part $D_{\mathrm{red}}$ is a relative NCD over $W$, 
and the multiplicity of
each component of $D$ is prime to $p$. Fix 
a $p$-th Frobenius $\sigma$ on $W[[\l]]$ such that 
$\sigma(\l)=a\l^p$ for some $a\in 1+pW$.
Then there is a natural homomorphism
\begin{equation}\label{syn-cpx-hom-log-rel}
\phi_{\cY_n/S_n}:\cS_n(r)'_{\cY(D)}
\lra 
\text{Mapping Fiber of }(p^r-f_{\cY_n/S_n}).
\end{equation}
\end{prop}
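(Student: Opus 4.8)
The plan is to construct $\phi_{\cY_n/S_n}$ in exactly the same fashion as its two predecessors — the absolute map $\phi_{X_n}$ of \eqref{syn-cpx-hom} and the logarithmic map $\phi_{X(D)_n}$ of \eqref{syn-cpx-hom-log} — only now carried out over the logarithmic base $(S,N)=(\Spec W[[\l]],\{\l=0\})$, so that the single nontrivial input is Kato's relative logarithmic crystalline comparison. First I would fix, \'etale-locally on $\cY$ and after the cosimplicial resolution already used to define $\cS_n(r)'_X$, a closed immersion $(\cY,M)\hookrightarrow(P,M_P)$ with $(P,M_P)$ log smooth over $(S,N)$ and carrying a $p$-th Frobenius $f$ that lifts the absolute $p$-power map on $P_1$ and is compatible with $\sigma$ on $W[[\l]]$ (recall $\sigma(\l)=a\l^p$, $a\in 1+pW$). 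Under the standing hypotheses — $D_{\mathrm{red}}$ a relative NCD over $W$ and the multiplicity of each component of $D$ prime to $p$ — the log scheme $\cY(D)/(S,N)$ fits the situation of \cite{AS-JAG} Condition 2.2.3 and of \cite{Ka3} \S 6, so that $\cS_n(r)'_{\cY(D)}$ is computed in $D^b(\cY_1^\et)$ as the mapping fiber of
\[
p^r-f\colon\ J^{[r-\bullet]}_{\cE_n}\ot_{\O_{P_n}}\omega^\bullet_{(P_n,M_{P_n})}\ \lra\ \O_{\cE_n}\ot_{\O_{P_n}}\omega^\bullet_{(P_n,M_{P_n})},
\]
where $\cE_n$ denotes the divided-power envelope of $\cY_n$ in $P_n$.

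Next, by \cite{Ka3} Theorem (6.4) — the de Rham description $Ru_*^{\log}\O_{\cY_n/S_n}\simeq\omega^\bullet_{\cY_n/S_n}$ together with the de Rham description of its Hodge filtration — the natural maps $\O_{\cE_n}\ot_{\O_{P_n}}\omega^\bullet_{(P_n,M_{P_n})}\to\omega^\bullet_{\cY_n/S_n}$ are quasi-isomorphisms, independent of $(P,f)$ in the derived category, and they carry the subcomplex $J^{[r-\bullet]}_{\cE_n}\ot\omega^\bullet_{(P_n,M_{P_n})}$ into $\omega^{\bullet\geq r}_{\cY_n/S_n}$. Transporting $f$ through these quasi-isomorphisms — literally as $f_{X_n}$ was produced after \eqref{syn-cpx-eq4} — yields precisely the map $f_{\cY_n/S_n}$ of \eqref{syn-cpx-hom-log-rel-f}, which is $\sigma$-semilinear and well defined in $D^b(\cY_1^\et)$ because $f$ is compatible with $\sigma$ and $\sigma(\l)=a\l^p$. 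Then the square
\[
\xymatrix{
J^{[r-\bullet]}_{\cE_n}\ot_{\O_{P_n}}\omega^\bullet_{(P_n,M_{P_n})} \ar[r]^{p^r-f} \ar[d] & \O_{\cE_n}\ot_{\O_{P_n}}\omega^\bullet_{(P_n,M_{P_n})} \ar[d]\\
\omega^{\bullet\geq r}_{\cY_n/S_n} \ar[r]^{p^r-f_{\cY_n/S_n}} & \omega^\bullet_{\cY_n/S_n}
}
\]
— the vertical arrows being the natural comparison maps, the right-hand one a quasi-isomorphism — commutes in $D^b(\cY_1^\et)$ by the very definition of $f_{\cY_n/S_n}$; this is the relative logarithmic analogue of \eqref{syn-cpx-eq4}, with $p^r-f$ in place of $1-p^{-r}f$, so that no division by $p$ is required and both rows are honest morphisms of complexes. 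Taking mapping fibers of the two rows produces $\phi_{\cY_n/S_n}$. Finally, restricting $\sigma$ to $W$ and composing with the canonical surjection $\Omega^\bullet_{\cY_n/W_n}(\log D_n)\to\omega^\bullet_{\cY_n/S_n}$ exhibits $\phi_{\cY_n/S_n}$ as a factorization of $\phi_{\cY(D)_n}$ of \eqref{syn-cpx-hom-log}, as asserted.

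The step I expect to be the main obstacle is the input just invoked: verifying that Kato's relative logarithmic crystalline comparison \cite{Ka3} \S 6 — and the de Rham description of its Hodge filtration — genuinely applies over the non-smooth base $(S,N)=(\Spec W[[\l]],\{\l=0\})$. This is exactly where the two hypotheses on $D$ enter: ``$D_{\mathrm{red}}$ a relative NCD over $W$'' makes $\cY(D)/(S,N)$ log smooth, and ``the multiplicity of each component of $D$ prime to $p$'' makes it of Cartier type and ensures that the divided-power envelopes $\cE_n$ behave correctly modulo $p^n$, which is precisely what Kato's theorem requires. Once that is granted, everything else is a routine transcription of the constructions already carried out for $f_{X_n}$ and for $f_{X(D)_n/W_n}$.
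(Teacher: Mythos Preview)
Your proposal is correct and follows essentially the same approach as the paper: the paper presents this proposition as a summary of the preceding discussion, which constructs $\phi_{\cY_n/S_n}$ by invoking Kato's relative log crystalline comparison \cite{Ka3} Theorem~(6.4) to obtain the quasi-isomorphism $\O_{\cE_n}\ot\omega^\bullet_{(P_n,M_{P_n})}\simeq\omega^\bullet_{\cY_n/S_n}$, transporting the Frobenius through it to define $f_{\cY_n/S_n}$, and then taking mapping fibers, exactly as you do. Your write-up is in fact more explicit than the paper's (which simply says ``In the same way as before one can define\ldots''), and you correctly identify where the hypotheses on $D$ enter.
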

Note that $\phi_{\cY/S}$ depends on the choice of Frobenius $\sigma$ on $W[[\l]]$.
\begin{cor}
Suppose $\dim(\cY/S)<r$.
Then 
\eqref{syn-cpx-hom-log-rel} induces
\begin{equation}\label{syn-cpx-hom-log-rel-1}
\phi_{\cY_n/S_n}:\cS_n(r)'_{\cY(D)}
\lra 
\omega_{\cY_n/S_n}^{\bullet}[-1].
\end{equation}
Suppose further that $f$ is projective, then this induces
\begin{equation}\label{syn-cpx-hom-log-rel-2}
\phi_{\cY/S}:H^i_\syn(\cY(D),\Z_p(r))':=\varprojlim_nH^i_\syn(\cY(D),\Z/p^n(r))'
\lra 
H^{i-1}(\cY,\omega_{\cY/S}^{\bullet}).
\end{equation}
\end{cor}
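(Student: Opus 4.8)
The plan is to deduce both assertions from Proposition~\ref{syn-cpx-hom-prop}, the first by a degree count and the second by passage to an inverse limit. For the first assertion, note that by the hypotheses on $D$ the morphism $\cY(D)\to(S,N)$ is log smooth, so $\omega^\bullet_{\cY_n/S_n}$ is a complex of locally free $\O_{\cY_n}$-modules concentrated in cohomological degrees $0,\dots,\dim(\cY/S)$. Under the hypothesis $\dim(\cY/S)<r$ this forces $\omega^{\bullet\ge r}_{\cY_n/S_n}=0$, so the source of the map $f_{\cY_n/S_n}$ in \eqref{syn-cpx-hom-log-rel-f} vanishes and the mapping fiber of $p^r-f_{\cY_n/S_n}$ is canonically identified with $\omega^\bullet_{\cY_n/S_n}[-1]$ in $D^b(\cY_1^\et)$. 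Composing this identification with $\phi_{\cY_n/S_n}$ of Proposition~\ref{syn-cpx-hom-prop} yields \eqref{syn-cpx-hom-log-rel-1}; as noted after the proposition, the resulting morphism still depends on the chosen Frobenius $\sigma$ on $W[[\l]]$.

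For the second assertion, assume $f$ is projective. Applying $R\Gamma(\cY_1,-)$ to \eqref{syn-cpx-hom-log-rel-1}, and using that the \'etale and Zariski hypercohomologies of a bounded complex of quasi-coherent sheaves agree, one gets for each $n$ a homomorphism $H^i_\syn(\cY(D),\Z/p^n(r))'\to H^{i-1}(\cY_n,\omega^\bullet_{\cY_n/S_n})$, compatible with the transition maps in $n$. Passing to $\varprojlim_n$ the left-hand side becomes $H^i_\syn(\cY(D),\Z_p(r))'$ by definition, and \eqref{syn-cpx-hom-log-rel-2} follows once one identifies $\varprojlim_n H^{i-1}(\cY_n,\omega^\bullet_{\cY_n/S_n})$ with $H^{i-1}(\cY,\omega^\bullet_{\cY/S})$.

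To obtain that identification I would argue as follows. Log smoothness makes each $\omega^q_{\cY/S}$ locally free over $\O_\cY$, hence flat over $W$, so that $\omega^\bullet_{\cY_n/S_n}=\omega^\bullet_{\cY/S}\otimes^{\mathbf L}_W W_n$ termwise and $R\Gamma(\cY_n,\omega^\bullet_{\cY_n/S_n})=P^\bullet\otimes^{\mathbf L}_W W_n=P^\bullet/p^n$, where $P^\bullet:=R\Gamma(\cY,\omega^\bullet_{\cY/S})$; projectivity of $f$ together with the regularity and Noetherianity of $W[[\l]]$ makes $P^\bullet$ a perfect complex of $W[[\l]]$-modules, so each $H^{i-1}(P^\bullet)$ is finitely generated over the $p$-adically complete and separated ring $W[[\l]]$, in particular $p$-adically complete. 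The universal-coefficient sequences $0\to H^{i-1}(P^\bullet)/p^n\to H^{i-1}(P^\bullet/p^n)\to H^i(P^\bullet)[p^n]\to 0$ together with the Mittag--Leffler property (the system $\{H^i(P^\bullet)[p^n]\}_n$ is pro-zero, because the $p$-power torsion submodule of $H^i(P^\bullet)$ is finitely generated hence bounded) then give the vanishing of the relevant $\varprojlim^1$ and $\varprojlim_n H^{i-1}(P^\bullet/p^n)\cong H^{i-1}(P^\bullet)$. The main obstacle is precisely this last step: the perfectness of $R\Gamma(\cY,\omega^\bullet_{\cY/S})$, the derived base-change isomorphism $R\Gamma(\cY_n,\omega^\bullet_{\cY_n/S_n})\simeq R\Gamma(\cY,\omega^\bullet_{\cY/S})\otimes^{\mathbf L}_W W_n$, and the commutation of cohomology with $\varprojlim$ must all be justified with care, since $\cY$ is a regular scheme that is not of finite type over $W$ and $S=\Spec W[[\l]]$ is not of finite type over $W$ either; by contrast the degree count and the comparison of \'etale and Zariski hypercohomology in the earlier steps are routine.
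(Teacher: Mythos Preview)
Your argument is correct and follows essentially the same route as the paper: the first assertion is a degree count, and the second reduces to identifying $\varprojlim_n H^{i-1}(\cY_n,\omega^\bullet_{\cY_n/S_n})$ with $H^{i-1}(\cY,\omega^\bullet_{\cY/S})$, which is done via local freeness of $\omega^q_{\cY/S}$, a universal-coefficient sequence, finite generation of the cohomology over $W[[\l]]$, and $p$-adic completeness.

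The only difference is one of packaging. Where you invoke perfectness of $R\Gamma(\cY,\omega^\bullet_{\cY/S})$ and a derived base-change isomorphism $R\Gamma(\cY_n,\omega^\bullet_{\cY_n/S_n})\simeq R\Gamma(\cY,\omega^\bullet_{\cY/S})\otimes^{\mathbf L}_W W_n$, the paper works more elementarily: it uses the short exact sequence of complexes $0\to\cE^\bullet\xrightarrow{p^n}\cE^\bullet\to\cE^\bullet/p^n\to0$ on $\cY$ itself (with $\cE^\bullet=\omega^\bullet_{\cY/S}$, using only that the terms are locally free of finite rank and that $\omega^1_{\cY_n/S_n}\cong\omega^1_{\cY/S}\otimes\Z/p^n$), and then reduces to two facts about an arbitrary finitely generated $W[[\l]]$-module $M$: that $M\to\varprojlim_n M/p^n$ is an isomorphism, and that $\varprojlim_n M[p^n]=0$ with transition maps given by multiplication by $p$. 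Thus the concerns you flag at the end (perfectness, derived base change over a non-finite-type base) can be sidestepped entirely; only properness of $f$ over the Noetherian ring $W[[\l]]$ is needed to get finite generation of $H^i(\cY,\cE^\bullet)$.
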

\begin{proof}
The first assertion is obvious.
For the proof of \eqref{syn-cpx-hom-log-rel-2} it is enough to show the following lemma.
\end{proof}
\begin{lem}\label{syn-cpx-hom-lem}
\begin{enumerate}
\item
$\omega^1_{\cY/S}$ is a locally free $\O_\cY$-module of rank $d=\dim(\cY/S)$,
and $\omega^1_{\cY_n/S_n}\cong \omega^1_{\cY/S}\ot\Z/p^n$.
\item 
Let $\cE^\bullet$ 
be a complex of locally free $\O_\cY$-modules of finite rank. 
If $f$ is projective, then
\[
H^i(\cY,\cE^\bullet)\os{\cong}{\lra}\varprojlim_n H^i(\cY,\cE^\bullet\ot\Z/p^n).
\]
\end{enumerate}
\end{lem}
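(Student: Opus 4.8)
The plan is to prove the two parts independently: (1) from the log smoothness of $f$, and (2) from finiteness of coherent cohomology together with the $p$-adic completeness of $W[[\l]]$.

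For (1), I would argue that $f\colon\cY\to S$, equipped with the log structures attached to $D_{\mathrm{red}}$ and to $\{\l=0\}$, is log smooth of relative dimension $d$; this is exactly the setting of \cite{Ka3}, \S 6. Concretely, near a point of $D$ where the components $D_1,\dots,D_r$ meet with multiplicities $m_1,\dots,m_r$, regularity of $\cY$ together with the hypothesis that $D_{\mathrm{red}}$ is a relative NCD produces a regular system of parameters containing local equations $x_i$ of $D_i$, and $\l=u\,x_1^{m_1}\cdots x_r^{m_r}$ with $u$ a unit; Kato's criterion for log smoothness then reduces to checking that the torsion subgroup $\Z/\gcd(m_1,\dots,m_r)\Z$ of $\Z^r/\Z\!\cdot\!(m_1,\dots,m_r)$ has order invertible on $\cY$, which holds because each $m_i$, being prime to $p$, is a unit of $W$. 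Hence $\omega^1_{\cY/S}$ is locally free of rank $d$. Equivalently one can compute directly: near $D$ one has $\omega^1_{\cY/S}=\Omega^1_{\cY/W}(\log D)/(\O_\cY\tfrac{d\l}{\l})$, and since $\tfrac{d\l}{\l}=\sum_i m_i\tfrac{dx_i}{x_i}+\tfrac{du}{u}$ with some $m_i\in W^\times$, the element $\tfrac{d\l}{\l}$ is part of a local basis, so the quotient is locally free of rank $d$. The identification $\omega^1_{\cY_n/S_n}\cong\omega^1_{\cY/S}\ot\Z/p^n$ is then formal: formation of log differentials commutes with the base change $S\to S_n$, and $\cY_n=\cY\times_W W_n$ gives $\O_{\cY_n}=\O_\cY/p^n$.

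For (2), put $R:=W[[\l]]$. First, since $f$ is projective and $\cE^\bullet$ is a bounded complex of coherent sheaves, each $H^i(\cY,\cE^\bullet)$ is a finitely generated module over the Noetherian ring $R$ (finiteness of cohomology for proper morphisms, combined with the hypercohomology spectral sequence $E_1^{pq}=H^q(\cY,\cE^p)\Rightarrow H^{p+q}(\cY,\cE^\bullet)$ and boundedness of $\cE^\bullet$). Second, $R$ is $p$-adically complete and separated --- indeed $R/p^nR=W_n[[\l]]$ and $\varprojlim_n W_n[[\l]]=W[[\l]]$ --- so every finitely generated $R$-module $N$ satisfies $N\os{\cong}{\lra}\varprojlim_n N/p^nN$. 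Third, $\cY$ is flat over $W$ (as $f$ is flat and $W[[\l]]$ is $W$-flat) and the $\cE^j$ are locally free, so multiplication by $p^n$ is injective on $\cE^\bullet$ and one gets, compatibly in $n$, short exact sequences
\[
0\lra H^i(\cY,\cE^\bullet)/p^n\lra H^i(\cY,\cE^\bullet\ot\Z/p^n)\lra H^{i+1}(\cY,\cE^\bullet)[p^n]\lra 0
\]
with transition maps given by reduction on the left term and by multiplication by $p$ on the right term. Then I would pass to the limit over $n$: the left-hand tower has surjective transition maps, hence is Mittag--Leffler with $\varprojlim_n H^i(\cY,\cE^\bullet)/p^n=H^i(\cY,\cE^\bullet)$ by the second step; the right-hand tower is pro-zero, because the $p$-power torsion submodule of the finitely generated module $H^{i+1}(\cY,\cE^\bullet)$ is killed by a fixed $p^N$, so any composite of $N$ transition maps vanishes, whence both $\varprojlim$ and $\varprojlim^1$ of it are zero. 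The six-term exact sequence of $\varprojlim$'s and $\varprojlim^1$'s attached to this tower of short exact sequences then yields $H^i(\cY,\cE^\bullet)\os{\cong}{\lra}\varprojlim_n H^i(\cY,\cE^\bullet\ot\Z/p^n)$, and unwinding the construction shows this is the natural map.

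I expect the only genuinely load-bearing step to be the finiteness assertion in (2): it is the sole place where projectivity of $f$ (rather than mere quasi-projectivity) is used, and it is what makes both the completeness statement and the vanishing of the $\varprojlim/\varprojlim^1$ error terms go through. In (1) the one point to watch is the precise way the hypothesis ``multiplicities prime to $p$'' enters --- namely as ``$m_i\in W^\times$'', which is exactly what turns $\tfrac{d\l}{\l}$ into a basis vector and makes the cokernel locally free; the remaining content there is the standard local theory of log smooth morphisms as in \cite{Ka3}.
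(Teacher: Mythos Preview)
Your proof is correct and follows essentially the same route as the paper: for (1) both you and the paper reduce to the local form $\l = u\,x_1^{m_1}\cdots x_r^{m_r}$ with $m_i$ prime to $p$ (the paper phrases this as an \'etale-local check, you via Kato's log-smoothness criterion and an explicit basis computation), and for (2) both use the short exact sequence $0\to H^i/p^n\to H^i(\cE^\bullet\otimes\Z/p^n)\to H^{i+1}[p^n]\to 0$ together with finiteness of cohomology over the $p$-adically complete ring $W[[\l]]$. The only stylistic difference is in showing $\varprojlim_n M[p^n]=0$: you argue directly that the $p$-power torsion of a finitely generated $W[[\l]]$-module is bounded (hence the tower is pro-zero), whereas the paper embeds $M$ as a quotient of a free module and uses the resulting six-term sequence; both are standard and equivalent.
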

\begin{proof}
The first assertion can be checked locally in the etale topology on
noticing that $f$ is locally given by 
$(x_0,\cdots,x_d)\mapsto \l=x_0^{a_0}\cdots x_s^{a_s}$ with $a_i>0$ prime to $p$.
We show the latter. We first note that $H^i(\cY,\cE^\bullet)$ and $H^i
(\cY,\cE^\bullet\ot\Z/p^n)$
are finitely generated $W[[\l]]$-modules since $f$ is proper.
By the exact sequence
\[
0\lra \cE^\bullet\lra \cE^\bullet\lra \cE^\bullet\ot\Z/p^n\lra0
\]
one has an exact sequence
\[
0\lra H^i(\cY,\cE)/p^n\lra H^i(\cY,\cE\ot\Z/p^n)\lra H^{i+1}(\cY,\cE)[p^n]\lra0
\]
where $M[p^n]$ denotes the $p^n$-torsion part.
Therefore the assertion is reduced to show that, for any 
finitely generated $W[[\l]]$-module $M$, 
\begin{equation}\label{M-lem-1}
M\os{\cong}{\lra}\varprojlim_n M/p^nM
\end{equation}
where the transition map $M/p^{n+1}\to M/p^n$ is the natural surjection, and
\begin{equation}\label{M-lem-2}
\varprojlim_n M[p^n]=0
\end{equation}
where the transition map $M[p^{n+1}]\to M[p^n]$ is multiplication by $p$.
The former assertion \eqref{M-lem-1} 
is a special case of the general fact $\hat{A}\ot_AM\cong \hat{M}$
for a finitely generated $A$-module $M$ with $A$ noetherian
(\cite{atiyah-mac} Prop.10.3).
We show \eqref{M-lem-2}.
Let
\[
0\lra M_1\lra M_2\lra M_3\lra0
\]
be any exact sequence of finitely generated $W[[\l]]$-modules. Then
\[
0\to M_1[p^n]\to M_2[p^n]\to M_3[p^n]\to M_1/p^n\to M_2/p^n\to M_3/p^n\to0.
\]
Suppose that $M_2[p^n]=0$ for all $n$. Then, by taking the projective limit, we have
\[
\xymatrix{
0\ar[r]&\varprojlim_n M_3[p^n]\ar[r]&
\varprojlim_n(M_1/p^n)\ar[r]& \varprojlim_n(M_2/p^n)\\
&&M_1\ar[r]\ar[u]^\cong&M_2\ar[u]_\cong}
\]
and hence $\varprojlim_n M_3[p^n]=0$ follows.
For the proof of \eqref{M-lem-2}, apply $M_3=M$ and $M_2$ a free $W[[\l]]$-module of 
finite rank.
\end{proof}
\begin{cor}\label{syn-cpx-hom-cor}
Let $f:\cY\to S$ be as in Proposition \ref{syn-cpx-hom-prop}.
If $\dim(\cY/S)<r$ then we have a map
\begin{equation}\label{syn-cpx-hom-log-rel-3}
\phi_{\cY_n/S_n}:\cS_n(r)_{\cY(D)}
\lra 
\omega_{\cY_n/S_n}^{\bullet}[-1]
\end{equation}
which makes the following commutative
\begin{equation}\label{syn-cpx-hom-log-rel-5}
\xymatrix{
\cS_n(r)_{\cY(D)}\ar[r]\ar[d]_{\eqref{syn-cpx-eq3}}&
\omega_{\cY_n/S_n}^{\bullet}[-1]\ar[d]^{p^r}\\
\cS_n(r)'_{\cY(D)}\ar[r]^{\eqref{syn-cpx-hom-log-rel-1}}&
\omega_{\cY_n/S_n}^{\bullet}[-1].
}
\end{equation}
If $f$ is projective, then we have
\begin{equation}\label{syn-cpx-hom-log-rel-4}
\phi_{\cY_n/S_n}:H^i_\syn(\cY(D),\Z_p(r))
\lra 
H^{i-1}(\cY,\omega^\bullet_{\cY/S})
\end{equation}
by taking the projective limit of the cohomology of \eqref{syn-cpx-hom-log-rel-3}
(note Lemma \ref{syn-cpx-hom-lem} (2) for the rght hand side).
\end{cor}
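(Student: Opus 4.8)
The plan is to deduce everything formally from Proposition~\ref{syn-cpx-hom-prop}, the preceding corollary, and Lemma~\ref{syn-cpx-hom-lem}, the only new input being the numerical observation that $\dim(\cY/S)<r$ makes the stupid truncation $\omega^{\bullet\geq r}_{\cY_n/S_n}$ vanish. Indeed $\omega^\bullet_{\cY_n/S_n}$ sits in degrees $\leq\dim(\cY/S)<r$, so the map $f_{\cY_n/S_n}$ of \eqref{syn-cpx-hom-log-rel-f} has zero source; hence the ``Mapping Fiber'' of $p^r-f_{\cY_n/S_n}$, and equally that of $1-p^{-r}f_{\cY_n/S_n}$, is canonically $\omega^\bullet_{\cY_n/S_n}[-1]$. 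Substituting this into \eqref{syn-cpx-hom-log-rel-1} of the preceding corollary is exactly the bottom row of \eqref{syn-cpx-hom-log-rel-5}.

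First I would construct the top arrow \eqref{syn-cpx-hom-log-rel-3}. For this one repeats the construction of $\phi_{\cY_n/S_n}$ in Proposition~\ref{syn-cpx-hom-prop} word for word, but starting from the \emph{unprimed} log syntomic complex $\cS_n(r)_{\cY(D)}$, i.e.\ the mapping fiber of $1-p^{-r}f$ on the log DP complexes, and then pushing forward to the relative log de Rham complex $\omega^\bullet_{\cY_n/S_n}$. Since $\omega^{\bullet\geq r}_{\cY_n/S_n}=0$, the target is $\omega^\bullet_{\cY_n/S_n}[-1]$, giving \eqref{syn-cpx-hom-log-rel-3}. For the commutativity of \eqref{syn-cpx-hom-log-rel-5} I would just unwind the definitions: by the log analogue of the square \eqref{syn-cpx-eq3}, the left-hand vertical map is induced by the identity on $J_{D_n}^{[r-\bullet]}\ot\omega^\bullet_{(P_n,M_{P_n})}$ and by multiplication by $p^r$ on $\O_{D_n}\ot\omega^\bullet_{(P_n,M_{P_n})}$; composing with the projection onto $\omega^\bullet_{\cY_n/S_n}$ therefore multiplies $\phi_{\cY_n/S_n}$ by $p^r$, which is precisely what \eqref{syn-cpx-hom-log-rel-5} asserts.

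Next, assuming $f$ projective, I would take etale hypercohomology of \eqref{syn-cpx-hom-log-rel-3} over $\cY_1$, obtaining $H^i_\syn(\cY(D),\Z/p^n(r))\to H^{i-1}_\et(\cY_1,\omega^\bullet_{\cY_n/S_n})$. By topological invariance of the etale site the target may be computed over $\cY_n$, and since each $\omega^j_{\cY_n/S_n}$ is coherent --- indeed locally free by Lemma~\ref{syn-cpx-hom-lem}(1) --- etale and Zariski cohomology agree, so the target is $H^{i-1}(\cY_n,\omega^\bullet_{\cY_n/S_n})$; by Lemma~\ref{syn-cpx-hom-lem}(1) this is $H^{i-1}(\cY,\omega^\bullet_{\cY/S}\ot\Z/p^n)$. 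These maps being compatible with reduction mod $p^n$, I would pass to $\varprojlim_n$: the left side becomes $H^i_\syn(\cY(D),\Z_p(r))$ by definition, and Lemma~\ref{syn-cpx-hom-lem}(2), applied to the complex $\omega^\bullet_{\cY/S}$ of locally free $\O_\cY$-modules (using that $f$ is projective for the required finiteness), turns the right side into $H^{i-1}(\cY,\omega^\bullet_{\cY/S})$. This yields \eqref{syn-cpx-hom-log-rel-4}.

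I do not anticipate a genuine obstacle: the statement is essentially the bookkeeping needed to package the machinery already assembled in this section. The two points demanding care are (i) verifying that the log analogue of \eqref{syn-cpx-eq3} matches the relative construction of Proposition~\ref{syn-cpx-hom-prop} term by term, so that the factor $p^r$ in \eqref{syn-cpx-hom-log-rel-5} lands on the correct side of the square, and (ii) the interchange of $\varprojlim_n$ with hypercohomology in the last step, which is exactly the role played by Lemma~\ref{syn-cpx-hom-lem}(2) together with the properness of $f$.
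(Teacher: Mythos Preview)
Your proposal is correct and follows essentially the same approach as the paper. The one nuance the paper makes explicit is that $f_{\cY_n/S_n}$ in \eqref{syn-cpx-hom-log-rel-f} is only a morphism in $D^b(\cY_1^\et)$, not of complexes, so it replaces $\omega^\bullet_{\cY_n/S_n}$ by a quasi-isomorphic complex $\wt\omega^\bullet_{\cY_n/S_n}$ in order to realize \eqref{syn-cpx-hom-log-rel-5} as a square of honest complex morphisms before descending to the derived category; your argument at the level of the log DP-envelope complexes (where the maps are already complex morphisms and the comparison with \eqref{syn-cpx-eq3} is term by term) accomplishes the same end.
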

\begin{proof}
Note first that 
the map \eqref{syn-cpx-hom-log-rel-f} is {\it not} a morphism of complexes.
In a precise sense,
there are a quasi isomorphism $\omega_{\cY_n/S_n}^\bullet\to
\wt\omega_{\cY_n/S_n}^\bullet$ and a morphism $\wt{f}_{\cY_n/S_n}:\omega_{\cY_n/S_n}^{\bullet\geq r}\to
\wt\omega_{\cY_n/S_n}^\bullet$ of complexes which sits into a commutative diagram
\[
\xymatrix{
J^{[\bullet-r]}_{D_n}
\ot \omega^\bullet_{(P_n,M_{P_n})}\ar[d]\ar[r]^{p^r-f}
&\O_{D_n}\ot \omega^\bullet_{(P_n,M_{P_n})}\ar[d]^h\\
\omega_{\cY_n/S_n}^{\bullet\geq r}\ar[r]^{\wt{f}_{\cY_n/S_n}}
&\wt\omega_{\cY_n/S_n}^\bullet.
}
\]
Note that $h$ is a quasi isomorphism.
Suppose that $\dim(\cY/S)<r$ and hence $\omega_{\cY_n/S_n}^{\bullet\geq r}=0$.
Now one immediately has a commutative diagram
\[
\xymatrix{
\cS_n(r)_{\cY(D)}\ar[r]\ar[d]_{\eqref{syn-cpx-eq3}}&
\wt\omega_{\cY_n/S_n}^{\bullet}[-1]\ar[d]^{p^r}\\
\cS_n(r)'_{\cY(D)}\ar[r]&
\wt\omega_{\cY_n/S_n}^{\bullet}[-1]
}
\]
of complexes and hence the diagram \eqref{syn-cpx-hom-log-rel-5} 
in the derived category. 
\end{proof}
\subsection{Vanishing of $\wt E^{(n)}_1(0)$}\label{mainpf2-1-sect}
We first show $\wt E^{(n)}_1(0)=0$.
Let $P_\nu\in Z_K$ be the singular point \eqref{sing-point} for $\nu\in W$ such that
$\nu^N=-1$. Let $(u,v)$ be a local coordinate of $\cY_K$ at $P_\nu$
such that $f(u,v)=\l$ where $f:\cY_K\to \Spec K[[\l]]$.
Then 
\[
\Res_{P_\nu}:\Omega^1_{\cX_K/K((\l))}\to \Omega^1_{K((\l))((u))/K((\l))}\to K((\l)),
\]
be the residue map where the second arrow is given by
\[
\sum_n c_n(\l)u^n\frac{du}{u}\longmapsto c_0(\l).
\]
This does not depend on the choice of the coordinates $(u,v)$ up to sign.
Obviously
\[
\Res_{P_\nu}(\Omega^1_{\cY_K/K[[\l]]})\subset K[[\l]]. 
\]
Thus the above residue map induces a commutative diagram
\[
\xymatrix{
H^1_\zar(\Omega^{\bullet\leq 1}_{\cY_K/K[[\l]]})\ar[r]\ar[d]&H^1_\dR(\cX_K/K((\l)))
\ar[d]^{\Res_{P_\nu}}\\
K[[\l]]\ar[r]^\subset&K((\l))
}
\]
where $H^i_\zar(\Omega^{\bullet\leq 1}_{\cY_K/K[[\l]]})
=H^i_\zar(\cY_K,\Omega^{\bullet\leq 1}_{\cY_K/K[[\l]]})$ 
denotes the Zariski cohomology of the complex
$\O_{\cY_K}\to \Omega^1_{\cY_K/K[[\l]]}$.

Let $Z\subset\cY$ be the central fiber over $\l=0$, which is a relative NCD
over $W$ of relative dimension $1$. 
Let $P_\nu$ be the intersection points as in \eqref{sing-point}.
and let $i_Z:Z_\bullet\to Z\subset \cY$ 
be the simplicial nerve of the normalization $\wt{Z}\to Z$.
Let $Z^{\mathrm{reg}}:=Z\setminus\{P_\nu\}$
and $Z_K^{\mathrm{reg}}:=Z_K\setminus\{P_\nu\}$.
There is the residue map $\Res_{P_\nu}^Z:H^1_\dR(Z^\reg_K/K)\to K$ at $P_\nu$
which is compatible with the above.
We consider a commutative diagram
\begin{equation}\label{mainpf2-1-eq1}
\xymatrix{
&H^2_\syn(Z_\bullet,\Z_p(2))\ar[d]_{\phi_{Z_\bullet}}
&H^2_\syn(\cY,\Z_p(2))\ar[d]^\phi\ar[l]\ar[r]&
H^2_\syn(\cY(Z),\Z_p(2))\ar[d]^{\phi_{\cY/W[[\l]]}}\\
H^1_\dR(Z^\reg_K/K)\ar[rd]_{\Res^Z_{P_\nu}}
&H^1_\dR(Z_{\bullet}/W)\ar[l]\ar[d]^{\text{0 map}}&
H^1_\zar(\Omega^{\bullet\leq 1}_{\cY/W[[\l]]})\ar[r]^\subset\ar[d]\ar[l]_{i_Z^*}
&H^1_\zar(\omega^{\bullet}_{\cY/W[[\l]]})\ar[d]^{\Res_{P_\nu}}\\
&K&K[[\l]]\ar[r]^\subset\ar[l]_{\mod \l}&K((\l)).
}
\end{equation}
Here $\phi_{Z_\bullet}$ is as in \eqref{syn-cpx-hom},
and $\phi_{\cY/W[[\l]]}$ is as in 
Corollary \ref{syn-cpx-hom-cor} \eqref{syn-cpx-hom-log-rel-4}, and
$\phi$ is the induced homomorphism from $\phi_{\cY/W[[\l]]}$.
\begin{lem}\label{mainpf2-1-lem2}
The symbol $\{h_1,h_2\}\in K_2(X)$ has no boundary at $Z=f^{-1}(0)$.
In particular $\{h_1,h_2\}|_\cX\in K_2(\cX)$ lies in the image of $K_2(\cY)$.
\end{lem}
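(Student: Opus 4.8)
The plan is to compute the boundary of the Steinberg symbol $\{h_1,h_2\}$ along each component of the fibre $Z=f^{-1}(0)$ in the $K$-theory localization sequence, show that it vanishes, and then invoke that sequence to obtain the lifting statement. Recall from \S\ref{main-sect-3} that $Z=Z_1\cup Z_2$ with $Z_1,Z_2$ smooth rational curves over $W$ meeting transversally at the points $P_\nu$ of \eqref{sing-point}, and that $\cX=\cY\setminus Z$; since $\cY$ is regular and integral, the only codimension-one points of $\cY$ lying on $Z$ are the generic points $\eta_{Z_1},\eta_{Z_2}$, and the obstruction to lifting $\{h_1,h_2\}|_\cX\in K_2(\cX)$ to $K_2(\cY)$ is carried by the two tame symbols $\partial_{Z_i}\{h_1,h_2\}\in\kappa(\eta_{Z_i})^\times$, $i=1,2$ (any residual contribution supported at the $P_\nu$ vanishes once these are trivial).

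The heart of the argument is therefore to check $\ord_{Z_i}(h_1)=\ord_{Z_i}(h_2)=0$ for $i=1,2$; then every exponent in the tame-symbol formula $\{h_1,h_2\}\mapsto(-1)^{\ord_{Z_i}(h_1)\ord_{Z_i}(h_2)}\,h_1^{\ord_{Z_i}(h_2)}h_2^{-\ord_{Z_i}(h_1)}\big|_{Z_i}$ is zero and the symbol is $1$. For $Z_2$ I would use that, by the construction of \S\ref{main-sect-3}, $Z_2$ is the closure of $\{y^N=x(1-x)^{N-1},\ \lambda=0\}$, which is rational via $y':=y/(1-x)$ (so that $h_1=(y'-\zeta_1)/(y'-\zeta_2)$); hence $x$, $1-x$, $y$ and $1-\lambda$ restrict to nonzero rational functions on $Z_2$, are units in the discrete valuation ring $\cO_{\cY,\eta_{Z_2}}$, and so are the monomials $h_1,h_2$ in them. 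For $Z_1$ I would pass to the chart $W_2$ of \eqref{int-locus3}, where $x=1/z$, $1-x=(z-1)/z$ and $y=w_1/z$ (using $w_1=w/z=y/x$ and $w=yx^{-2}$); cancelling powers of $z$ gives
\[
 h_2=\frac{1-\lambda}{(z-1)^2},\qquad h_1=\frac{w_1-\zeta_1(z-1)}{w_1-\zeta_2(z-1)},
\]
and on $Z_1=\{z=0\}$ these restrict to $1$ and $(w_1+\zeta_1)/(w_1+\zeta_2)\not\equiv0$, so $h_1,h_2$ are again units at $\eta_{Z_1}$. Thus both tame symbols equal $1$, $\{h_1,h_2\}$ has no boundary at $Z$, and the localization sequence $K_2(\cY)\to K_2(\cX)\xrightarrow{(\partial_{Z_1},\partial_{Z_2})}\kappa(\eta_{Z_1})^\times\oplus\kappa(\eta_{Z_2})^\times$ shows $\{h_1,h_2\}|_\cX$ lies in the image of $K_2(\cY)$.

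The only real difficulty is bookkeeping: carrying the coordinate identifications of \S\ref{main-sect-3} (the chain $z=x^{-1}$, $w=yx^{-2}$, $w_1=w/z$, together with the two blow-ups) through correctly so that $Z_1$ and $Z_2$ are matched with the right affine charts, and confirming that $f^{-1}(0)$ contributes no codimension-one point of $\cY$ other than $\eta_{Z_1},\eta_{Z_2}$ — both of which are already pinned down by that construction. Once the restrictions of $h_1$ and $h_2$ to the two components are written out, the vanishing of the orders, hence of the tame symbols, is immediate.
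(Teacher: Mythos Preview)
Your proposal is essentially correct and matches the paper's approach: the paper's own proof is simply ``We already know the explicit description of $Z$ in \S\ref{main-sect-3}. The assertion is now an easy exercise,'' and your explicit computation of $h_1,h_2$ in the chart $W_2$, together with the verification that both are units at the generic points of $Z_1$ and $Z_2$ (hence have trivial tame symbols there), is exactly that exercise written out. Your coordinate identifications $w_1=y/x$, $z=1/x$ and the resulting expressions $h_1=(w_1-\zeta_1(z-1))/(w_1-\zeta_2(z-1))$, $h_2=(1-\lambda)/(z-1)^2$ are correct, and the restrictions to $Z_1$ and $Z_2$ are as you state.

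One small imprecision: the localization sequence for the regular scheme $\cY$ with closed $Z$ reads $K_2(\cY)\to K_2(\cX)\to G_1(Z)$, and since $Z$ is nodal the target is not literally $\kappa(\eta_{Z_1})^\times\oplus\kappa(\eta_{Z_2})^\times$. Your parenthetical ``any residual contribution supported at the $P_\nu$ vanishes once these are trivial'' is doing real work here and deserves a word of justification---for instance, since $h_1,h_2$ are units on an open $V\subset\cY$ containing both $\eta_{Z_i}$, the symbol already lives in $K_2(V)$, so the obstruction to extending across $Z$ is supported on the codimension-$2$ locus $Z\setminus(V\cap Z)$, and one then finishes via the Gersten filtration. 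The paper elides this point as well, so your treatment is on par with theirs.
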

\begin{pf}
We already know the explicit description of $Z$ in \S \ref{main-sect-3}.
The assertion is now an easy exercise.
\end{pf}
We turn to the proof of $\wt E^{(n)}_1(0)=0$.
By Lemma \ref{mainpf2-1-lem2},
an element 
\begin{equation}\label{mainpf2-1-eq2}
R:=\reg_\syn(\{h_1,h_2\})|_{\cY}\in H^2_\syn(\cY,\Q_p(2))
\end{equation}
 is defined.
Let $c:H^1_\zar(\Omega^{\leq 1}_{\cY/W[[\l]]})\to H^1_\dR(\cX/K((\l)))$ be
the natural map.
Then one easily sees
\begin{equation}\label{mainpf2-1-eq3}
c\circ \phi\circ\text{can}(R)=\sum_{n=1}^{N-1}\wt E^{(n)}_1(\l)\wt\omega_n
+\wt E^{(n)}_2(\l)\wt\eta_n\in H^1_\dR(\cX_K/K((\l))).
\end{equation}
A direct computation yields $\Res_{P_\nu}(\wt\eta_n)=0$ and
\[
\Res_{P_\nu}(\omega_n)=(-1)^s\nu^{-n}+\mbox{(higher terms)}\in K[[\l]]
\]
where $s\in \Z$ does not depend on $n$.
By a diagram chase of \eqref{mainpf2-1-eq1}, we have
\[
\sum_{n=1}^{N-1}\nu^{-n}\cdot\wt E^{(n)}_1(0)=0
\]
for all $\nu$ such that $\nu^N=-1$.
This implies $\wt E^{(n)}_1(0)=0$ for all $n$ as required.

\subsection{Computing $\wt E^{(n)}_2(0)$}\label{mainpf2-2-sect}
We keep the notation in the diagram \eqref{mainpf2-1-eq1}.
Consider a commutative diagram
\begin{equation}\label{mainpf2-2-eq1}
\xymatrix{
K_2(Z_\bullet)\ar[d]_{\reg^{Z_\bullet}_\syn}&K_2(\cY)\ar[l]\ar[d]^{\reg^\cY_\syn}\\
H^2_\syn(Z_\bullet,\Z_p(2))\ar[d]_\cong&H^2_\syn(\cY,\Z_p(2))\ar[d]^\phi\ar[l]&\\
H^1_\dR(Z_\bullet/W)&
H^1_\zar(\Omega^\bullet_{\cY/W[[\l]]})\ar[r]\ar[l]_{i_Z^*}\ar[d]
&H^1_\dR(\cX_K/K((\l)))\ar[d]\\
H^1(\O_{Z_\bullet})\ar[u]^\cong&H^1(\O_{\cY})\ar[l]_{i'_Z}\ar[r]^j&H^1(\O_{\cX_K})
}
\end{equation}
where $\reg_\syn$ are the syntomic regulator maps.
 Since $H^1(Z,\O_{Z})\cong H^1(Z_\bullet,\O_{Z_\bullet})$ is a free $W$-module of rank 
 $N-1$ (=the genus of the generic fiber $\cX_K$),
it follows from \cite{Ha} III, 12.9 
that $H^1(\O_{\cY})$ is a free $W[[\l]]$-module of rank $N-1$ and
$H^1(\O_{Z})\cong  H^1(\O_{\cY})\ot_{W[\l]}W[\l]/(\l)$.
In particular $j:H^1(\O_\cY)\to H^1(\O_{\cX_K})\cong H^1(\O_{\cY_K})\ot_{K[\l]} K(\l)$
is an inclusion 
and $i'_Z$ is the reduction modulo $\l$.

\medskip

We prove that $\wt E^{(n)}_2(0)$ satisfies
the initial conditions \eqref{main-diffeq-3}.
Recall the element \eqref{mainpf2-1-eq2} in $H^2_\syn(\cY,\Q_p(2))$.
By the diagram \eqref{mainpf2-2-eq1}, we have an element
\[
\sum_{n=1}^{N-1}\wt E^{(n)}_1(\l)\wt\omega_n
+\wt E^{(n)}_2(\l)\wt\eta_n\in H^1_\dR(\cX_K/K((\l))),
\]
and an element
\[
\beta\in H^1_\dR(\O_{Z_\bullet}).
\]
and they correspond as follows
\[
\sum_{n=1}^{N-1}\wt E^{(n)}_2(0)\cdot i'_Z(\wt\eta_n)=\pm\beta\in H^1_\dR(\O_{Z_\bullet}).
\]
Note that $\{i'_Z(\wt\eta_n)\mid n=1,\ldots,N-1\}$ 
is a basis of $H^1(\O_{Z_K})\cong K^{N-1}$.
Thanks to the compatibility of the syntomic regulator maps,
$\beta$ coincides with
the syntomic regulator of $\{h_1,h_2\}|_{Z_\bullet}\in K_2(Z_\bullet)$ up to sign.
Recall  the definition of $\wt\ve^{(n)}_i(\l)$ from \eqref{mainpf1-eq1}.
By Lemma \ref{gm-lem1} it turns out
\[
\sum_{n=1}^{N-1}\wt E^{(n)}_1(\l)\wt\omega_n
+\wt E^{(n)}_2(\l)\wt\eta_n
=p^{-1}\log^{(\sigma)}(h_1)\frac{dh_2^\sigma}{h_2^\sigma}
-\log^{(\sigma)}(h_2)\frac{dh_1}{h_1}\in H^1_\dR(\cX_K/K((\l)))
\]
and hence we have
\begin{equation}\label{mainpf2-2-eq2}
\left(p^{-1}\log^{(\sigma)}(h_1)\frac{dh_2^\sigma}{h_2^\sigma}
-\log^{(\sigma)}(h_2)\frac{dh_1}{h_1}\right)\bigg|_{Z_\bullet}=\sum_{n=1}^{N-1}
\wt E^{(n)}_2(0)\cdot i'_Z(\wt\eta_n)\in H^1(\O_{Z_\bullet})
\end{equation}
without ambiguity of sign.
We compute the both sides of \eqref{mainpf2-2-eq2} explicitly. 
Let $\{P_\nu\}=Z_1\cap Z_2$ be the set of intersection points \eqref{sing-point}
and $\phi_i:\{P_\nu\}\hra Z_{i,K}$ the embeddings.
We fix an isomorphism
\begin{equation}\label{mainpf2-2-eq3}
\left(\bigoplus_{\nu^N=-1}H^0(\O_{P_\nu})\right)/\Image \phi\os{\cong}{\lra}
H^1(\O_{Z_K})\os{\cong}{\lra}
H^1(\O_{Z_\bullet})
\end{equation}
which arises from an exact sequence
\[
0\lra \O_{Z_K}\lra \O_{Z_{1,K}}\op \O_{Z_{2,K}}\os{\phi}{\lra} \bigoplus_\nu\O_{P_\nu}\lra 0
\]
where we put $\phi:=\phi_1^*-\phi^*_2$.
We denote an element $(c_\nu)_\nu\in (\op_\nu H^0(\O_{P_\nu}))/\Image\phi$
by $\sum_\nu c_\nu [P_\nu]$.
Then it is not hard to show
\[
i'_Z(\wt\eta_n)=-\sum_\nu\nu^{-n}[P_\nu]
\]
and hence we have
\begin{equation}\label{mainpf2-2-eq5}
\mbox{RHS of \eqref{mainpf2-2-eq2}}=-\sum_{\nu^N=-1}
\left(\sum_{n=1}^{N-1}
\wt E^{(n)}_2(0)\nu^{-n}\right)[P_\nu]
\end{equation}
under the identification \eqref{mainpf2-2-eq3}.
Next we compute the left hand side of \eqref{mainpf2-2-eq2}.
Recall the affine open set \eqref{int-locus3}   
\[
W_2=\Spec \Z_p[z,w_1,\l,(z-1)^{-1}]/(zw_1^N-(z-1)^{N-1}(z-\l))\subset Y.
\]
Then $Z_1\cap W_2=\{z=\l=0\}$, $Z_2\cap W_2=\{w_1^N-(z-1)^{N-1}=\l=0\}$
and $P_\nu=\{z=w_1+\nu=\l=0\}$.
The symbol $\{h_1,h_2\}|_{W_2}$ is defined by 
\[
h_1|_{W_2}=\frac{w_1-\zeta_1(z-1)}{w_1-\zeta_2(z-1)},\quad
h_2|_{W_2}=\frac{1-\l}{(z-1)^2}.
\]
Let $u$ be the inhomogeneous coordinate of $Z_2\cong \P^1_{\Z_p}$ such that
$u^{N-1}=w_1$ and $u^N=z-1$.
Define $p$-th Frobenius maps $\varphi_i$ on $Z_i\cong\P^1_{\Z_p}$
by $\varphi_1(w_1)=w_1^p$ and $Z_1$ and $\varphi_2(u)=u^p$ on $Z_2$.
Since the LHS of \eqref{mainpf2-2-eq2} coincides with the syntomic regulator
$\reg_\syn\{h_1,h_2\}|_{Z_\bullet}$ up to sign, and it does not depends on the choice
of the Frobenius, we may assume that $\sigma$ is given by the Frobenius
$\varphi=(\varphi_1,\varphi_2)$ on $Z_\bullet$.
Let
\[
{\mathscr S}_{\wt Z}(2):\O_{Z_1}\op\O_{Z_2}\os{d}{\lra}
\Omega^1_{Z_1}\op\Omega^1_{Z_2},
\]
\[
{\mathscr S}_{Z_1\cap Z_2}(2):\bigoplus_\nu\O_{P_\nu}\lra 0
\]
be the syntomic complexes on $\wt Z=Z_1\coprod Z_2$ and
$Z_1\cap Z_2$ respectively where the first terms are placed in degree $0$ (\cite{Ka1} 2.5).
The syntomic complex ${\mathscr S}_{Z_\bullet}(2)$ of $Z_\bullet$ is given 
as the mapping fiber of $\phi=\phi_1^*-\phi_2^*:{\mathscr S}_{\wt Z}(2)\to 
{\mathscr S}_{Z_1\cap Z_2}(2)$.
Note $h_2|_{Z_1}=1$ and
\[
h_1|_{Z_2}=\frac{1-\zeta_1 u}{1-\zeta_2 u},\quad h_2|_{Z_2}=u^{-2N}.
\]
Therefore the LHS of \eqref{mainpf2-2-eq2} is given as the cohomology class of
\[
R_n:=\left(0,-2N\log^{(p)}\left(\frac{1-\zeta_1 u}{1-\zeta_2 u}\right)\frac{du}{u}\right)
\in \Omega^1_{Z_{1,n}}\op\Omega^1_{Z_{2,n}},\quad Z_{i,n}:=Z_i\ot\Z/p^{n+1}\Z
\]
where $\log^{(p)}h:=p^{-1}\log h(u)^p/h(u^p)$, namely
\begin{align*}
\mbox{LHS of \eqref{mainpf2-2-eq2}}
=(\mbox{the cohomology class of }(R_n)_{n\geq 0})
&\in 
\Q\ot\left(\varprojlim_n H^1_\syn(Z_\bullet,\Z/p^{n+1}(2))\right)\\
&=
H^1_\syn(Z_\bullet,\Q_p(2))\cong H^1(\O_{Z_\bullet}).
\end{align*}
We write down this in terms of the $p$-adic dilog function
\[
\mathrm{ln}_2^{(p)}(u):=\sum_{p\not|n}\frac{u^n}{n^2}\in \Z_p\left\{u,\frac{1}{1-u}\right\}^\dag.
\]
Note
\[
-\log^{(p)}(1-u)\frac{du}{u}
=d\mathrm{ln}^{(p)}_2(u).
\]
Therefore
the isomorphism
\[
H^2_\syn(Z_\bullet,\Z_p(2))\os{\cong}{\lra}H^1(\O_{Z_\bullet}) \os{\cong}{\lra}
\left(\bigoplus_{\nu^N=-1}H^0(P_\nu)\right)/\Image \phi
\]
sends the cohomology class of $R$ to an element
\[
-2N\phi[\mathrm{ln}^{(p)}_2(\zeta_1u)-\mathrm{ln}^{(p)}_2(\zeta_2u)]
=
-2N\sum_\nu(\mathrm{ln}^{(p)}_2(\zeta_1\nu^{-1})-\mathrm{ln}^{(p)}_2(\zeta_2\nu^{-1}))
\cdot[P_\nu].
\]
Here we note $P_\nu=\{w_1=-\nu\}=\{u=\nu^{-1}\}$.
We thus have
\begin{equation}\label{mainpf2-2-eq6}
\mbox{LHS of \eqref{mainpf2-2-eq2}}
=-2N\sum_{\nu^N=-1}(\mathrm{ln}^{(p)}_2(\zeta_1\nu^{-1})
-\mathrm{ln}^{(p)}_2(\zeta_2\nu^{-1}))\cdot[P_\nu]
\end{equation}
under the identification \eqref{mainpf2-2-eq3}.
Comparing \eqref{mainpf2-2-eq6} with \eqref{mainpf2-2-eq5}, we have
\[
-\sum_{n=1}^{N-1}
\wt E^{(n)}_2(0)\nu^{-n}
=-2N(\mathrm{ln}^{(p)}_2(\zeta_1\nu^{-1})
-\mathrm{ln}^{(p)}_2(\zeta_2\nu^{-1}))\]
for all $\nu$, which uniquely determines $\wt E^{(n)}_2(0)$.
One easily finds
\begin{align*}
\wt E^{(n)}_2(0)&=2\sum_{\nu^N=-1}
\nu^n(\mathrm{ln}^{(p)}_2(\zeta_1\nu^{-1})-\mathrm{ln}^{(p)}_2(\zeta_2\nu^{-1}))\\
&=2(\zeta^n_1-\zeta^n_2)\sum_{\nu^N=-1}
\nu^{-n}\mathrm{ln}^{(p)}_2(\nu).
\end{align*}
This competes the proof of Theorem \ref{main-thm}.
\section{Coleman integrals and Syntomic regulators}\label{coleman-sect}
Let $W=W(\F_q)$ be the Witt ring of a finite field $\F_q$.
Put $K:=\Frac(W)$.
Let $C$ be a smooth projective curve over $K$.
In the seminal paper \cite{CdS}, Coleman and de Shalit defined
a $p$-adic regulator map
\[
r_p:K_2(C)\lra \Hom(\vg(C,\Omega^1_{C/K}),K)
\]
using the {\it Coleman integrals}
\[
\int_{(f)}\log(g)\omega.
\]
We refer to the reader a very nice exposition \cite{Be3} for Coleman integrals.
A. Besser proved that $r_p$ agrees with the syntomic regulator map.
\begin{thm}[\cite{Be2} Theorem 3]\label{besser-thm}
Suppose that $C$ has a good reduction.
Let $\phi:H^1_\dR(C/K)\to H^1_\dR(C/K)$ be the $p$-th Frobenius compatible with
the canonical Frobenius on $K$.
Let
\[
\Theta:H^2_\syn(C,\Q_p(2))\cong H^1_\dR(C/K)
\os{1-p^{-2}\phi}{\us{\sim}{\longleftarrow}}
H^1_\dR(C/K)
\]
be the composition of isomorphisms where the first one is the canonical one,
and in the second arrow ``$1$'' denotes the identity.
Then
\[
\mathrm{Tr}_C(\Theta(\reg_\syn\{f,g\})\cup[\omega])=(r_p\{f,g\})(\omega)
\]
for $\omega\in H^0(C,\Omega^1_{C/K})$.
\end{thm}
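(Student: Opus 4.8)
The plan is to represent both sides by explicit cocycles and then to match them via the characterising property of Coleman integration as (a branch of) the inverse of $1-p^{-1}\phi$ on closed one‑forms. I would start from Besser's explicit model of rigid syntomic cohomology \cite{Be1}: for $C$ with good reduction, $\R\Gamma_{\rig\text{-}\syn}(C,n)$ is the mapping fibre of $1-p^{-n}\phi$ acting simultaneously on the complex computing $\Fil^n$ algebraic de Rham cohomology and on the overconvergent de Rham complex of the associated dagger space, so a class in $H^2_\syn(C,\Q_p(2))$ is represented by a pair $(\alpha,\xi)$ with $\alpha$ a $2$‑form (hence $0$, as $\dim C=1$) and $\xi$ an overconvergent $1$‑form. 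Using the cup‑product formula for mapping fibres together with the description $\{f\}_{\rig\text{-}\syn}=\big(\tfrac{df}{f},\log^{(\sigma)}f\big)$ of the symbol map recalled in the proof of Proposition~\ref{1-ext-thm3}, one finds that $\reg_\syn\{f,g\}$ is the class of
\[
R_{f,g}=\big(0,\ \xi_{f,g}\big),\qquad \xi_{f,g}:=p^{-1}\log^{(\sigma)}(f)\frac{dg^\sigma}{g^\sigma}-\log^{(\sigma)}(g)\frac{df}{f},
\]
which is \cite{Be2}~(3.2) and agrees with the cocycle of Theorem~\ref{1-ext-thm1}. Since $\Fil^2H^2_\dR(C/K)=0$, the canonical isomorphism $H^2_\syn(C,\Q_p(2))\xrightarrow{\ \sim\ }H^1_\dR(C/K)$ carries $R_{f,g}$ to the de Rham class $[\xi_{f,g}]$, so the left‑hand side of the identity equals $\mathrm{Tr}_C\big((1-p^{-2}\phi)^{-1}[\xi_{f,g}]\cup[\omega]\big)$ (up to the sign already present in Proposition~\ref{1-ext-thm3}).

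Next I would unwind the Coleman--de Shalit regulator \cite{CdS}. By bilinearity and the Steinberg relation one may reduce to symbols $\{f,g\}$ whose divisors are in general position, and then $r_p(\{f,g\})(\omega)=\int_{(f)}\log(g)\,\omega$ is by definition the finite sum $\sum_{P}\ord_P(f)\int_{b}^{P}\log(g)\,\omega$ of Coleman line integrals from a fixed base point $b$, with $\log$ the Frobenius‑equivariant (Coleman) branch of the logarithm; this is independent of $b$ because $\sum_P\ord_P(f)=0$ (see \cite{Be3} for a clean account). The key analytic input is that the Coleman primitive of a closed overconvergent $1$‑form $\nu$ on the tube of $C_{\ol\F_p}$ is obtained from any naive local primitive by applying the $p$‑adically convergent operator $\sum_{k\ge 0}p^{-k}\phi^{k}$ to its $(1-p^{-1}\phi)$‑part; equivalently, Coleman integration realises $(1-p^{-1}\phi)^{-1}$ on closed forms modulo exact ones. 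Feeding $\xi_{f,g}$ into this, the function $\log^{(\sigma)}(f)=p^{-1}\log(f^p/f^\sigma)$ --- which differs from the Coleman $\log f$ by exactly the coboundary correction encoding the Frobenius lift --- is converted into $\log f$; carrying the corresponding correction through the cup product and re‑expressing the global trace $\mathrm{Tr}_C$ as a sum of local residues attached to $\mathrm{div}(f)$ should then reproduce $\int_{(f)}\log(g)\,\omega$.

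A structurally cleaner route, which I would ultimately prefer, is to factor the comparison through Besser's finite polynomial cohomology: rigid syntomic cohomology is an instance of it, its cup product and trace map are computed by Coleman integration essentially by construction, and the operator $\Theta=(1-p^{-2}\phi)^{-1}$ arises as a built‑in comparison between two normalisations of that theory; this would reduce the theorem to the general statement that finite‑polynomial pairings on curves are given by Coleman integration, applied to the explicit symbol cocycle above.

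I expect the main obstacle, in either route, to be the boundary bookkeeping: one must keep precise track of the signs and coboundary corrections in the mapping‑fibre cup product, identify the global de Rham trace pairing $\mathrm{Tr}_C(\,\cdot\,\cup[\omega])$ with the sum over $\mathrm{div}(f)$ of local Coleman integrals defining $\int_{(f)}$ (a $p$‑adic analogue of the classical ``period $=$ sum of residues'' computation), and show that applying $(1-p^{-2}\phi)^{-1}$ to $[\xi_{f,g}]$ lands exactly on the Coleman integral and not on some other primitive --- this last point is where the Frobenius‑equivariance characterisation of Coleman functions is indispensable. A subsidiary technical issue is that $\xi_{f,g}$ has logarithmic singularities along $\mathrm{div}(f)\cup\mathrm{div}(g)$, so the computation naturally takes place in the syntomic cohomology of the open curve, and one must descend to $C$ through the localisation (Gysin) sequence compatibly with Coleman integration; that descent is the technical heart of the argument.
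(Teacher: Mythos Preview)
The paper does not give a proof of this theorem: it is quoted verbatim from Besser \cite{Be2}, Theorem~3, as the bracketed attribution in the theorem header indicates, and the paper only \emph{uses} it (in the proof of Theorem~\ref{coleman-thm}) to translate the syntomic computation into a statement about Coleman--de Shalit's $r_p$. So there is nothing to compare against here.

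That said, your outline is broadly consonant with Besser's actual argument in \cite{Be2}: he introduces an auxiliary cohomology theory (the ``modified syntomic'' / finite-polynomial cohomology) whose cup product on curves is computed by Coleman integration essentially by construction, and then identifies the syntomic symbol with a class in that theory, which makes $\Theta=(1-p^{-2}\phi)^{-1}$ appear as the comparison isomorphism. Your ``structurally cleaner route'' is therefore the right one, and the first route you sketch (direct manipulation of the cocycle $\xi_{f,g}$ and residue bookkeeping) is more or less what that abstraction is packaging. The obstacles you flag --- sign conventions in the mapping-fibre cup product, matching $\mathrm{Tr}_C$ with the sum of local Coleman integrals, and the descent from the open curve through the localisation sequence --- are exactly the points Besser has to address, so your assessment of where the work lies is accurate. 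If you want to write this out, you should work directly from \cite{Be2} rather than from the present paper.
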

Recall from \S \ref{main-sect-2} the HG fibration $f:X\to S$ and
the $K_2$-symbol $\{h_1,h_2\}\in K_2(X)$.
The main theorem (Theorem \ref{main-thm})
describes the syntomic regulator
\[
\reg_\syn\{h_1,h_2\}|_{\l=\alpha}\in H^1_\dR(X_\alpha/K),\quad X_\alpha:=f^{-1}(\alpha)
\]
for $\alpha\in W$ such that $\l^\sigma|_{\l=\alpha}=\alpha^\sigma$
and $\alpha\not\equiv 0,1$ mod $p$,
in terms of overconvergent functions $\ve^{(n)}_i(\l)$.
Define $s_{i,\alpha}^{(n)}\in K$ to be the elements satisfying
\begin{align*}
&
\Theta(\reg_\syn\{h_1,h_2\}|_{\l=\alpha})=
\sum_{n=1}^{N-1}s^{(n)}_{1,\alpha}\,\omega_n+s^{(n)}_{2,\alpha}\,\eta_n\\
\Longleftrightarrow\quad
&\reg_\syn\{h_1,h_2\}|_{\l=\alpha}
=\sum_{n=1}^{N-1}(1-p^{-2}\phi)(s^{(n)}_{1,\alpha}\,\omega_n+s^{(n)}_{2,\alpha}\,\eta_n)
\end{align*}
where $\phi$ is the $p$-th Frobenius on $H^1_\dR(X_\alpha/K)$.
\begin{thm}\label{coleman-thm}
Let $s_{i,\alpha}^{(n)}\in K$ be characterized by
\begin{equation}\label{besser-thm-eq2}
\begin{pmatrix}
(s^{(n)}_{1,\alpha})^\sigma\\
(s^{(n)}_{2,\alpha})^\sigma
\end{pmatrix}
=
p\frac{\alpha^\sigma-(\alpha^\sigma)^2}{\alpha-\alpha^2}
\begin{pmatrix}
F_{22}(\alpha)&-F_{12}(\alpha)\\
-pF_{21}(\alpha)&pF_{11}(\alpha)
\end{pmatrix}
\left[
\begin{pmatrix}
s^{(n)}_{1,\alpha}\\
s^{(n)}_{2,\alpha}
\end{pmatrix}
-
\begin{pmatrix}
\ve^{(n)}_1(\alpha)\\
\ve^{(n)}_2(\alpha)
\end{pmatrix}\right]
\end{equation}
where $F_{ij}(\l)$ are the overconvergent functions in Corollary \ref{frob-eigen}.

Let $r_p:K_2(X_\alpha)\to \Hom(\vg(X_\alpha,\Omega^1_{X_\alpha}),K)$
be the $p$-adic regulator due to Coleman and de Shalit \cite{CdS}.
Then
\begin{equation}\label{besser-thm-eq1}
r_p\{h_1,h_2\}|_{\l=\alpha}(\omega_n)=\int_{(h_1)}\log(h_2)\omega_n|_{\l=\alpha}=\pm
(\alpha-\alpha^2)^{-1}s^{(n)}_{2,\alpha}\in K
\end{equation}
for $\alpha\in W$ such that $\l^\sigma|_{\l=\alpha}=\alpha^\sigma$
and $\alpha\not\equiv 0,1$ mod $p$.
\end{thm}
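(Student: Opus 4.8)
The plan is to combine Theorem \ref{main-thm} with Besser's comparison between the syntomic and the Coleman--de Shalit regulator (Theorem \ref{besser-thm}). For $\alpha\in W$ with $\alpha\not\equiv 0,1$ mod $p$ the substitution $\l\mapsto\alpha$ is a $W$-point of $S=\Spec\Z_p[\l,(\l-\l^2)^{-1}]$, so $X_\alpha$ is proper and smooth over $W$, i.e.\ has good reduction, and $\{h_1,h_2\}|_{\l=\alpha}\in K_2(X_\alpha)$ makes sense; that $r_p\{h_1,h_2\}|_{\l=\alpha}(\omega_n)=\int_{(h_1)}\log(h_2)\omega_n|_{\l=\alpha}$ is the definition of Coleman--de Shalit's regulator. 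By Theorem \ref{besser-thm},
\[
r_p\{h_1,h_2\}|_{\l=\alpha}(\omega_n)=\mathrm{Tr}_{X_\alpha}\!\left(\Theta(\reg_\syn\{h_1,h_2\}|_{\l=\alpha})\cup[\omega_n]\right),
\]
so it remains to (i) compute the coordinates $s^{(n)}_{i,\alpha}$ of $\Theta(\reg_\syn\{h_1,h_2\}|_{\l=\alpha})$ in the basis $\{\omega_n,\eta_n\}$ and (ii) evaluate the cup-product pairing against $\omega_n$.

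For (i): by definition of $\Theta$ one has $\reg_\syn\{h_1,h_2\}|_{\l=\alpha}=(1-p^{-2}\phi)\Theta(\reg_\syn\{h_1,h_2\}|_{\l=\alpha})$, where $\phi$ is the $\sigma$-linear $p$-th Frobenius on $H^1_\dR(X_\alpha/K)$. Since $p\equiv1$ mod $N$, $\phi$ respects the $\mu_N$-eigenspace decomposition \eqref{main-sect-1-eq2}, and on the $n$-th block it is given by the matrix $\left(\begin{smallmatrix}pF_{11}&F_{12}\\pF_{21}&F_{22}\end{smallmatrix}\right)\big|_{\l=\alpha}$ of Corollary \ref{frob-eigen}. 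Expanding the identity $\reg_\syn=(1-p^{-2}\phi)\Theta(\reg_\syn)$ block by block in the basis $\{\omega_n,\eta_n\}$, inverting this $2\times 2$ matrix (its determinant is $p\,(\alpha-\alpha^2)/(\alpha^\sigma-(\alpha^\sigma)^2)$ by \eqref{frob-eigen-det}), and using the components $\ve^{(n)}_i(\alpha)$ of $\reg_\syn\{h_1,h_2\}|_{\l=\alpha}$ supplied by Theorem \ref{main-thm}, one obtains precisely the $\sigma$-difference relation \eqref{besser-thm-eq2} characterizing $s^{(n)}_{i,\alpha}$. This step is purely formal.

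For (ii): one evaluates $\mathrm{Tr}_{X_\alpha}(-\cup[\omega_n])$ on $H^1_\dR(X_\alpha/K)$ in the basis $\{\omega_m,\eta_m\}_m$. Two observations reduce this to a single entry: $\omega_m\cup\omega_n$ lies in $\Fil^2H^2_\dR(X_\alpha/K)=0$, so $\mathrm{Tr}(\omega_m\cup\omega_n)=0$; and since $[\zeta]$ acts trivially on $H^2_\dR(X_\alpha/K)$ while acting by $\zeta^{m+n}$ on the image of $H^{(m)}\otimes H^{(n)}$, the pairing $\mathrm{Tr}(\eta_m\cup\omega_n)$ vanishes unless $m\equiv -n$ mod $N$. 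Hence only the term of $\Theta(\reg_\syn\{h_1,h_2\}|_{\l=\alpha})$ lying in the block dual to $\omega_n$ survives the pairing, so $r_p\{h_1,h_2\}|_{\l=\alpha}(\omega_n)$ equals $s^{(n)}_{2,\alpha}$ (with the indexing normalised as in the statement and using the symmetry $F_n=F_{N-n}$) times the unique nonzero pairing entry $\mathrm{Tr}_{X_\alpha}(\eta\cup\omega_n)$. That entry is $\pm(\alpha-\alpha^2)^{-1}$: this follows from the description of the de Rham symplectic basis in \S\ref{gm-dR-sect}, since the normalization $\langle\wh\omega_i,\wh\eta_j\rangle=\delta_{ij}$ of (DS2), together with $\wh\omega(\nu)=\sum_n\nu^n\wt\omega_n$, $\wh\eta(\nu)=\sum_n\nu^{-n}\wt\eta_n$ (Proposition \ref{gm-jac-sect-prop2}) and the change of basis \eqref{main-thm-wt} --- in which $\wt\eta_n$ carries the factor $\l-\l^2$ --- forces the cup-product pairing of $\eta_n$ against $\omega_n$ in the relevant block to acquire a factor $(\l-\l^2)^{-1}$. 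Evaluating at $\l=\alpha$ and tracking the sign gives \eqref{besser-thm-eq1}.

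The main obstacle is this last normalization. The de Rham symplectic basis $\{\wh\omega_i,\wh\eta_j\}$ is pinned down only analytically near $\l=0$, so one must transport the identity $\langle\wh\omega_i,\wh\eta_j\rangle=\delta_{ij}$ into an explicit statement about the algebraic basis $\{\omega_n,\eta_n\}$ by the linear algebra of \S\ref{gm-dR-sect} (or, equivalently, re-derive the pairing from the period integrals computed in the proof of Proposition \ref{gm-jac-sect-prop2} and compare constant terms at $\l=0$), and then keep careful track of the sign and of the factors $\tfrac{n}{N}$ and $\l-\l^2$ appearing in \eqref{main-thm-wt}. Once this constant is secured, everything else --- the matrix inversion of step (i), the two vanishing statements of step (ii), and the translation into the Coleman integral --- is routine given \S\S\ref{frob-sect}, \ref{1-ext-sect} and \ref{coleman-sect}.
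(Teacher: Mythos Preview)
Your proposal is correct and follows essentially the same route as the paper: apply Besser's comparison (Theorem \ref{besser-thm}), invert $1-p^{-2}\phi$ on each $\mu_N$-eigenspace using the Frobenius matrix of Corollary \ref{frob-eigen} (together with the determinant identity \eqref{frob-eigen-det}) to obtain \eqref{besser-thm-eq2}, and then evaluate the cup-product pairing against $\omega_n$ to get \eqref{besser-thm-eq1}. The paper carries out the last step more directly via the change-of-basis identity $\mathrm{Tr}_{X_\alpha}([\omega_n]\cup[\eta_n])=-\tfrac{N}{n}(\alpha-\alpha^2)^{-1}\mathrm{Tr}_{X_\alpha}([\wt\omega_n]\cup[\wt\eta_n])=\pm\tfrac{N}{n}(\alpha-\alpha^2)^{-1}$, whereas you unwind the de Rham symplectic normalization (and are more explicit than the paper about the eigenspace orthogonality and the symmetry $F_n=F_{N-n}$); up to keeping track of the constant $\tfrac{N}{n}$ this is the same argument.
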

\begin{pf}
\eqref{besser-thm-eq1} is immediate from Theorem \ref{besser-thm}  
toegther with the fact
\begin{align*}
\mathrm{Tr}_{X_\alpha}([\omega_n]\cup[\eta_n])
&=
-\frac{N}{n}(\alpha-\alpha^2)^{-1}\mathrm{Tr}_{X_\alpha}([\wt\omega_n]\cup[\wt\eta_n])\\
&=
\pm
\frac{N}{n}(\alpha-\alpha^2)^{-1}.
\end{align*}
By Theorem \ref{main-thm} we have
\[
\ve_1^{(n)}(\alpha)\omega_n
+\ve_2^{(n)}(\alpha)\eta_n
=(1-p^{-2}\phi)(s^{(n)}_{1,\alpha}\,\omega_n+s^{(n)}_{2,\alpha}\,\eta_n).
\]
The $p$-th Frobenius $\phi$ on $H^1_\rig(X_\alpha/K)$ is explicitly given 
in Corollary \ref{frob-eigen} \eqref{frob-eigen-eq1}. 
Hence we have
\[
\begin{cases}
\ve_1^{(n)}(\alpha)
=s^{(n)}_{1,\alpha}-p^{-1}(s^{(n)}_{1,\alpha})^\sigma F_{11}(\alpha)
-p^{-2}(s^{(n)}_{2,\alpha})^\sigma F_{12}(\alpha)\\
\ve_1^{(n)}(\alpha)
=s^{(n)}_{2,\alpha}-p^{-1}(s^{(n)}_{1,\alpha})^\sigma F_{21}(\alpha)
-p^{-2}(s^{(n)}_{2,\alpha})^\sigma F_{22}(\alpha)
\end{cases}
\]
which is equivalent to \eqref{besser-thm-eq2} noting \eqref{frob-eigen-det}.
\end{pf}
{\bf Complement.} Put
\[
A:=\frac{\alpha^\sigma-(\alpha^\sigma)^2}{\alpha-\alpha^2}
\begin{pmatrix}
F_{22}(\alpha)&-F_{12}(\alpha)\\
-pF_{21}(\alpha)&pF_{11}(\alpha)
\end{pmatrix},\quad
{\mathbf e}:=\begin{pmatrix}
\ve^{(n)}_1(\alpha)\\
\ve^{(n)}_2(\alpha)
\end{pmatrix}.
\]
\[
A=(PD(P^{-1})^\sigma)^{-1}=P^\sigma D^{-1}P^{-1}
\]
\[
A^{\sigma^{-1}}=PE^{\sigma^{-1}}(P^{-1})^{\sigma^{-1}},
A^{\sigma^{-2}}=P^{\sigma^{-1}}E^{\sigma^{-2}}(P^{-1})^{\sigma^{-2}},
\]
\[
A^{\sigma^{-1}}A^{\sigma^{-2}}\cdots A^{\sigma^{-n}}
=PE^{\sigma^{-1}}\cdots E^{\sigma^{-n}}(P^{-1})^{\sigma^{-n}}
\]
A more explicit description of $s^{(n)}_{i,\alpha}$ satisfying 
\eqref{besser-thm-eq2} is
\begin{equation}\label{besser-thm-eq3}
p^{-1}\begin{pmatrix}
s^{(n)}_{1,\alpha}\\
s^{(n)}_{2,\alpha}
\end{pmatrix}=-A^{\sigma^{-1}}{\mathbf e}^{\sigma^{-1}}
-pA^{\sigma^{-1}}A^{\sigma^{-2}}{\mathbf e}^{\sigma^{-2}}
-p^2A^{\sigma^{-1}} A^{\sigma^{-2}}A^{\sigma^{-3}}{\mathbf e}^{\sigma^{-3}}
-\cdots.
\end{equation}
If $r=[K:\Q_p]$, then $\sigma^{-1}=\sigma^{r-1}$, \ldots. Hence one has overconvergent
functions
\[
s^{(n)}_i(\l)\in \Q_p\{\l,(1-\l)^{-1}\}^\dag
\]
which may depend on $r$,  such that
\[
s_i^{(n)}(\alpha)=s^{(n)}_{i,\alpha}
\]
for all $\alpha\in W$ such that $\l^\sigma|_{\l=\alpha}=\alpha^\sigma$
and $\alpha\not\equiv 0,1$ mod $p$.

\end{document}